\newtheorem{thm}{Theorem}[section]
\newtheorem{lemma}[thm]{Lemma}
\newtheorem{conjecture}[thm]{Conjecture}
\newtheorem{prop}[thm]{Proposition}
\newtheorem{defi}[thm]{Definition}
\theoremstyle{remark}
\newtheorem*{remark}{Remark}
\theoremstyle{definition}
\numberwithin{equation}{section}
\DeclareMathOperator{\Disc}{Disc}
\newcommand{\ZZ}{{\mathbb Z}}
\newcommand{\CC}{{\mathbb C}}
\newcommand{\PP}{{\mathbb {P}}}
\newcommand{\QQ}{{\mathbb {Q}}}
\newcommand{\RR}{{\mathbb {R}}}
\renewcommand{\hat}{\widehat}
\newcommand{\kk}{{k}}
\newcommand{\sF}{{\mathcal F}}
\newcommand{\sO}{{\mathcal O}}
\newcommand{\sS}{{\mathcal S}}
\newcommand{\FF} {{\mathbb F}}
\newcommand{\tns}{T_n}
\newcommand{\tnss}{T_n^{\ast}}
\newcommand{\ssy}{\mu}
\newcommand{\SSY}{{\mathfrak U}}
\newcommand{\pram}{{\rho}}
\newcommand{\nssy}{\nu_{n}(C_{\ssy})}
\newcommand{\cssy}{{C_{\ssy}}}
\newcommand{\fp}{{\mathfrak p}}
\newcommand{\sgn}{{\rm Sign}}
\newcommand \nip{{M_i(p)}}
\newcommand \nmx{{M_m(X)}}
\newcommand{\beql}[1]{\begin{equation}\label{#1}}
\newcommand{\eeq}{\end{equation}}
\begin{document}

%#######################################################################
%#######################################################################

\title{Splitting Behavior of $S_n$-Polynomials}

\author{Jeffrey C.  Lagarias}
\address{Dept. of Mathematics\\
University of Michigan \\
Ann Arbor, MI 48109-1043\\
}
\email{lagarias@umich.edu}

\author{Benjamin L. Weiss}
\address{Dept. of Mathematics\\ University of Maine\\
Orono, ME 04469\\}
\email{weiss@math.umaine.edu}

\subjclass{Primary 11R09; Secondary 11R32, 12E20, 12E25}

\thanks{The first author was partially supported by NSF grants  DMS-1101373 and DMS-1401224}
 \date{March 13, 2015}

\begin{abstract} 
We  analyze the probability that, for a fixed finite set of primes $S$, a random, monic, degree $n$ polynomial  $f(x) \in \ZZ[x]$
with coefficients in  a box of side $B$  satisfies: 
(i) $f(x)$ is irreducible over $\QQ$, with 
splitting field $K_f/\QQ$ over $\QQ$ having  Galois group $S_n$;  
(ii) the  polynomial discriminant $Disc(f)$  is relatively prime to all primes in $S$; 
(iii) $f(x)$ has  a  prescribed splitting type $(\bmod \, p)$ at  each  prime $p$ in $S$.

The limit  probabilities as $B \to \infty$ are described in terms of 
values of  a one-parameter family of
measures on $S_n$, called  $z$-splitting measures, with parameter $z$ evaluated
at the primes $p$ in $S$. We study properties of these measures.
%%%%%%%%%%%% %%%%%%%%%%%%%%%%%%%%%%%%
% in general, complex-valued, but positive real values in the application
%%%%%%%%%%%%%%%%%%%%%%%%%%%%%%%%%%%%
We  deduce that   there 
exist degree $n$  extensions of $\QQ$ with  
Galois closure having Galois group $S_n$
with a given finite set of primes $S$ having given Artin symbols, 
 with some restrictions on allowed Artin symbols for $p<n$.
 We compare the distributions of these measures
 with  distributions formulated by Bhargava for   splitting probabilities for
a fixed prime $p$ in such degree $n$ extensions ordered by size of
discriminant, conditioned to be relatively prime to $p$.
%%%%%%%%%%%%%%%%%%%%%%%%%%%%%%%%%
%We also extend the density results  to polynomials with coefficients
%in the ring of integers of an algebraic number field.
%%%%%%%%%%%%%%%%%%%%%%%%%%%%%%

\end{abstract}

\maketitle

%\newpage
%#######################################################################
%#######################################################################
% SECTION 1
%#######################################################################

\section{Introduction}

 By an  {\em $S_n$-polynomial}   we mean   a degree $n$ monic polynomial $f(x) \in \ZZ[x]$
whose 
splitting field $K_f/\QQ$, obtained by adjoining all
roots of $f(x)$ has Galois group $S_n$.
 It is well known that with high probability  a ``random" degree $n$ monic polynomial with 
 integer coefficients independently drawn from  a box $[-B, B]^n$ is
 irreducible and is an $S_n$-polynomial. 
In 1936 van der Waerden \cite{Waerden:1936}
showed  that   this probability approaches $1$
as the box size $B \to \infty$.  
For such a polynomial, adjoining one root of $f(x)$ gives
an $S_n$-number field.  Later authors obtained quantitative versions giving
explicit bounds for the cardinality of  the exceptional set; see  Section \ref{sec41}.

This paper considers a refinement of this problem: to  study the set of 
polynomials with coefficients in a box $[-B, B]^n$ which are $S_n$-polynomials  
 prescribed to have a given splitting behavior at a given finite set of primes $\{p_k : 1 \le k \le r\}$. 
 %%%%%%%%%%%%%%%%%%%%%%%%%%%%%%%%%%%%%%%%%%%%%%%
%The limiting  frequencies of such polynomials is no longer   $1$ as the box size $B \to \infty$.
%%%%%%%%%%%%%%%%%%%%%%%%%%%%%%%%%%%%%%%%%%%%%%%%
It shows the existence of  limiting splitting densities as $B \to \infty$, conditional on the discriminant $\Disc(f)$ of the polynomial $f$ being relatively prime to  $\prod_{i=1}^r  p_i$.
This conditioning imposes  a non-ramification condition, requiring the polynomials to have square-free factorizations $\pmod {p_i}$ with $1 \le i \le r$.   This conditioning has two important consequences: 
\begin{enumerate}
\item[(1)]
The square-free assumption permits the  limiting splitting densities  to be interpreted as  a set of probability distributions
on the symmetric group $S_n$, which depend on the  prime $p$. 
These distributions are  constant on conjugacy classes of $S_n$.
\item[(2)]
The resulting limit of distributions  possess an interpolation property as $p$ varies. 
The splitting densities are the values at $z=p$ of a one-parameter family of  complex-valued
measures $\nu_{n,z}^{\ast}$ on the symmetric group $S_n$ 
which we call {\em $z$-splitting measures.} 
The interpolation property is: {the values $\nu_{n, z}^{\ast}(g)$  on fixed elements $g \in S_n$ are  rational functions
in the  parameter $z$.} 
\end{enumerate}
These limiting splitting densities at $z=p$  have a simple origin. They are inherited  from corresponding
densities for splitting of polynomials in $p$-adic fields recently studied by the second author
\cite{Weiss:2013}, which in turn arise from splitting probabilities for polynomials over
finite fields. 
The latter probabilities  are evaluated by  counting  the  monic polynomials over $\FF_p$ having
various square-free factorization types in $\FF_p[X]$,  for which there are 
explicit combinatorial formulas. 
%%%%%%%%%%%%%%%%%%%%%%%%%%%
%\footnote{The paper \cite[Theorem 5.3]{Weiss:2013}  also treated degree $n=3$
%limiting densities in ramified cases, i.e. polynomials with 
%repeated factors $(\bmod \, p)$. It showed  limiting splitting densities exist 
%in ramified cases for  all primes $p \ge 5$.}
% However it  observed that the density types of different
%Galois groups different $p$ cannot be interpolated by rational functions at  $z=p$;
%they depend on the Legendre symbol  $(\frac{3}{p})$.}.
%%%%%%%%%%%%%%%%%%%%%%%%%%%%

The first contribution of this paper is to introduce  and study the
 $z$-splitting measures on $S_n$, and show that for parameter values $z=p$ they
 are limiting splitting distributions for $S_n$-polynomials above as the box size $B \to \infty$.
A  second contribution is to 
to compare and contrast the limiting probabilities of the model of this paper to a recent probability
model of Bhargava \cite{Bhargava:2007}, 
which  considers algebraic number fields of degree $n$, called $S_n$-number fields, whose normal closure has Galois group $S_n$.
Bhargava's model  concerns limiting splitting probabilities of a fixed prime $p$ taken over 
$S_n$-number fields having discriminant bounded by a parameter $D$, as $D \to \infty$.
The  interesting feature is that  the limiting probabilities of the two models do not agree.
 We now describe these two contributions in more detail.

%#######################################################################
%Subection 11.: Existence and Properties 
%#######################################################################

\subsection{Existence and properties of $z$-splitting  measures} \label{sec11}

The paper directly defines the $z$-splitting measures
as rational functions of $z$ by a  combinatorial formula given in Definition \ref{de21a}, and studies their basic properties in Section \ref{sec3}.
Only later in the paper do we show that for $z$ a prime power these $p^k$-splitting densities coincide with
the limiting densities for  splitting
of $S_n$-polynomials, doing this for $k=1$  in  Section \ref{sec4} over  the rational field $\QQ$ and  
for general $k$ in Section \ref{sec5}  for polynomials with coefficients
over  general number fields.

 The splitting types of a square-free monic polynomial $(\bmod \, p)$ of degree $n$ are described by partitions $\ssy$
of $n$, which are identified with conjugacy classes on the symmetric group $S_n$.  
For each $n \ge 1,$ and for each prime $p$ in Section \ref{sec21} we define {\em $p$-splitting measures}
$\nu_{n, p}^{\ast}(\cdot)$ on $S_n$ which are constant on conjugacy classes $\cssy$ of $S_n$.  
We show the following results, whose precise statements are given in Section \ref{sec2}.
\begin{enumerate}
\item[(i)]
For a fixed prime $p$, the limiting probabilities as $B \to \infty$ for degree $n$ monic polynomials $f(x)\in \ZZ[x]$ conditioned
on $p \nmid \Disc(f)$  to have  a given splitting type $\ssy$ exist and are given by  
the values $\nu_{n, p}^{\ast}(\cssy)$ (See Theorem \ref{th12}).  
For  a  fixed splitting type  $\ssy$   
 the values $\nu_{n, p}^{\ast}(\cssy)$  as functions of the prime $p$  are  interpolated by a rational function $R_{\mu}(z)\in \CC(z)$,
where we have $R_{\mu}(p)=\nu_{n, p}^{\ast}(\cssy)$ holding for each  $p$.
 This {\em rational function interpolation property} yields  a parametric family of (complex-valued) measures 
$\nu_{n, z}^{\ast}$  on  $S_n$
for $z \in \PP^1(\CC) \smallsetminus \{ 0, 1\}$,  termed {\em $z$-splitting measures}.
\item[(ii)]
The $z$-splitting measure is a positive probability measure
whenever $n$ is an integer greater than $1$  and $z= t$ is a real number greater than $n-1$.  
The uniform distribution on $S_n$ is the $z$-splitting measure for  $z= \infty \in \PP^1(\CC)$
(See Theorem \ref{th20a}). 
\item[(iii)]
There exist  infinitely many  $S_n$-number fields having prescribed
splitting types $(p_i, \ssy_i)$ at a given finite set of primes $S=\{ p_1, ..., p_r\}$, 
provided that all the splitting types have $\nu_{n, p_i}^{\ast}(\cssy_i)>0$.
The latter conditions are satisfied if and only if there exists an $S_n$-number field $K$ with a subring of algebraic integers
that is a monogenic order with discriminant
relatively prime to $\prod_i p_i$. The 
 existence of one such number field $K$
 certifies that the associated probability $\nu_{n, p_i}^{\ast}(\ssy_i)>0$
(See Theorem \ref{th13}).
\item[(iv)]
For each $n \ge 2$  there is a finite set of {\em exceptional pairs} $(p_i, \ssy_i)$ having 
$\nu_{n, p_i}^{\ast}(\cssy_i)=0$.
The exceptional primes  $p_i$ necessarily satisfy $2 \le p_i \le n-1$, and this
set is nonempty for $n \ge 3$.
The   exceptional pairs correspond to the condition that
all $S_n$-number fields having such a splitting type $(p_i, \ssy_i)$ have the prime $p_i$
as an {\em essential discriminant divisor} (a notion  defined in Section \ref{sec23}) (Theorem \ref{th25}). 
The phenomenon of essential discriminant divisors was first noted in 1878 by  Dedekind \cite{Dedekind:1878}.
\end{enumerate}

The $z$-splitting  measures $\nu_{n,z}^{\ast}$  seem of intrinsic interest, and arise in contexts not considered in this paper. First, 
the measure   $\nu_{n,k}^{\ast}$  may also have an interesting
 representation theoretic interpretation  for integer values $z=k$,  viewing   the measure   as 
specifying a rational character  of $S_n$. The  first author will show that this
is the case for $z=1$, where the measure is a signed measure supported on
the Springer regular elements of $S_n$ \cite{Lagarias:2014}. Second, for $z=p^k$
these measures  arise  in a fundamental example  
 in  the  theory of representation stability being developed by
  Church, Ellenberg and Farb \cite{CEF:2013a}.  \cite{CEF:2013b},
see  Section \ref{sec72}.

 %#######################################################################
%Subection 12.: Bhargava model
%#######################################################################

\subsection{Bhargava $S_n$-number field  splitting model }\label{sec12}

The  probability model for polynomial factorization $(\bmod \, p)$ 
studied in this paper  has strong parallels  with a  probability model 
developed by  Bhargava \cite{Bhargava:2007}
 for the splitting of primes in certain number fields $K/\QQ$ of degree $n$.

 Bhargava defines an   {\em $\,S_n$-number field} $\,\,K/\QQ\,\,$
 to be a number field with \\
 $[K: \QQ]=n$ whose Galois closure $L$  over $\QQ$ has Galois group
$S_n$.  Thus  $[L:\QQ] = n!$ while $[K:\QQ]=n$. An $S_n$-number  field $K$ is a non-Galois extension of $\QQ$
for $n \ge 3$.
Bhargava's probability  model  takes  as  its sample
space, with parameter $D$, the set of all $S_n$-number fields $K$ of discriminant 
$|D_K| \le D$ with the uniform distribution; his 
results and conjectures concern  limiting behavior of the splitting densities at 
a fixed prime $p$ as $D \to \infty$,
conditioned on the restriction that the field $K$ be unramified  at $(p)$, i.e. $p \nmid D_K$, the (absolute) field
discriminant of $K$. 
He formulates conjectures about these limiting distributions for splitting of a fixed prime $(p)$ and proves them  for $n\le 5$.
These conjectures  are  unproved for  $n \ge 6$. 

There is a close connection of $S_n$-number fields  with  $S_n$-polynomials, which
relates the two models. 
Any  primitive element  $\theta$ of an $S_n$-field  that is an algebraic integer has  $\theta$ being a root of
an $S_n$-polynomial. Conversely, adjoining a single root of an $S_n$-polynomial $f(x)$ always yields
a field $K= \QQ(\theta)$ that is an $S_n$-extension in Bhargava's sense. 
In the case that  $p \nmid \Disc(f)$, where $\Disc(f)$ is the polynomial discriminant, the splitting type
of the polynomial  $ f(x) \, (\bmod \, p)$ determines the splitting type of the prime ideal $(p)$ in  $K/\QQ$,
and also the Artin symbol $[ \frac{K_f/\QQ}{(p)}]$ (which is a conjugacy class in $S_n$).
The probability model of this paper can then be interpreted as
studying pairs $(K, \alpha)$
in which $K$ is an $S_n$-number field, given with an element $\alpha \in O_{K}$ such that
$K= \QQ(\alpha)$, with a finite sample space specified by size restrictions on the coefficients
that the (monic) minimal polynomial of $\alpha$ satisfies. Bhargava's model samples fields $K$ with one distribution, while
the model of this paper samples $(K, \alpha)$ with another distribution.

We discuss Bhargava's  model in detail in Section \ref{sec2a}.
 Our main observation is  that the limiting probability distributions of the two models do not
agree: the $p$-splitting measures depend on $p$, while Bhargava's
measures are the uniform measure on $S_n$,
which is independent of $p$.  We also observe that Bhargava's limit measure, the uniform measure on $S_n$, arises as  the $p \to \infty$ limit of the $z$-splitting measures.  
%%%%%%%%%%%%%%%%%%%%%%%%%%
 % perhaps at a geometric level involving the  discriminant locus.
 %%%%%%%%%%%%%%%%%%%%%%%%%%%
 In Section \ref{sec25} we present a detailed comparison 
of the structural features of the models, and identify 
differences. However  we do not
have  a satisfying conceptual explanation that accounts for the
  differences of the limiting 
probabilities in the two models, and leave finding one as
an open question..

 %#######################################################################
%Subection 13. 
%#######################################################################

\subsection{Plan of Paper}

Section \ref{sec2} states the main results.
Section \ref{sec2a} discusses Bhargava's number field splitting model and
compares its predicted probability distributions with the model of this paper.
 Section \ref{sec3}  derives basic properties of the splitting probabilities.
 Section \ref{sec4}  obtains  the limiting distributions
of splitting probabilities for polynomials with integer coefficients in a box. These splitting probabilities are essentially inherited
from the analogous splitting probabilities for random monic polynomials over
finite fields, see Section \ref{sec35}.
We also establish result (iii) above on  existence of infinitely many $S_n$-number fields having
given splitting types at a finite set of primes, avoiding exceptional pairs.
Section \ref{sec5}  extends the splitting results of this paper  to monic polynomials
with coefficients in rings of integers of a fixed number field,
choosing boxes based on a fixed $\ZZ$-basis of the ring of integers. The answer  involves the
splitting measures $\nu_{n, q}^{\ast}(\cssy)$ for $q= q=p^f$, $f \ge 1$.
This generalization is an application of results of S. D. Cohen \cite{Cohen:1981}.
Section \ref{sec6} discusses generalizations of the splitting problem to random matrix
ensembles,  as well as other appearances of $z$-splitting densities.  \medskip

{\bf Notation.}
Our  notation for partitions  differs from Macdonald    \cite{Macdonald:1995}.
We denote partitions of $n$ by  $\ssy= (\ssy_1, ..., \ssy_k)$,  
with $\ssy_1 \ge \ssy_2 \ge \cdots \ge \ssy_k$,  where Macdonald uses $\lambda$;
and  the multiplicity of part $i$ of $\ssy$ is  denoted $c_i(\ssy) :=|\{j: \ssy_j =  i\}|$,
%%%%%%%%%%%%%%%
%(counting $i$-cycles), 
%%%%%%%%%%%%%%%
where  Macdonald uses $m_i(\lambda)$. We sometimes write a partition  of $n$ in bracket notation as
$\ssy= \langle 1^{c_1}, 2^{c_2}, \cdots ,n^{c_n}\rangle$, with only  $c_i=c_i(\mu)>0$ included,
following Stanley \cite{Stanley:2012}.
%%%%%%%%%%%%
%Thus $\ssy= (\lambda_1, \lambda_2, ...) $ with $\lambda_1 \ge \lambda_2 \ge \cdots \ge 0.$
%We write $|\ssy |:= \lambda_1 + \lambda_2 + \cdots$ and we write $\ssy \vdash |\ssy| = n$.
%We set  $m_i (\ssy) = | \{ j: \lambda_j = 1\}.$
%%%%%%%%%%%%%%

%#######################################################################
%Section 2: 
%#######################################################################

\section{Results}\label{sec2}

%#######################################################################
% SECTION 2.1
%#######################################################################
\subsection{Splitting Measures } \label{sec21}

The results in this paper are expressible  in terms of 
a discrete family of probability distributions on the symmetric group $S_n$ indexed by $q=p^k$.
These distributions belong to  a  one-parameter family of 
complex-valued measures  on $S_n$,
 depending on a parameter  $z \in \CC \smallsetminus \{ 0, 1\}$
given below, which we call  {\em $z$-splitting measures.}
Restricting the parameter to real values $z=t \in \RR \smallsetminus \{ 0, 1\}$
we obtain signed measures of total mass $1$, and all the parameter values $t=q= p^k$
which are prime powers give nonnegative probability measures on $S_n$;
these measures originally arose in 
statistics involving  the factorization
of random square-free polynomials over $\FF_q[X]$, see Section \ref{sec35}.

%****************
% Definition 2.1
%***************
\begin{defi} \label{de20a}
{\em For each degree $m \ge 1$  the $m$-th
{\em necklace polynomial} $\nmx$  by
$$
\nmx := \frac{1}{m} \sum_{d|m} \mu(d) X^{m/d}.
$$
where $\mu(d)$ is the M\"{o}bius function. }
\end{defi}

The necklace  polynomial takes integer values at  integers $n$,
its values at positive integers  have an enumerative interpretation that
justifies its name, given in  Section \ref{sec31}. 
These polynomials  arise in our context because for  $X= q=p^f$ a prime power, $M_m(q)$ counts the number of 
irreducible monic degree $m$ polynomials in  $\FF_q[X]$, where $\FF_q$ is the finite field
with $q$ elements, see Lemma \ref{le31}.

For a given element $g \in S_n$, denote its cycle structure (lengths of cycles)
 by
$\ssy= \ssy(g) =: \left(\ssy_1,\ \ssy_2,\ \ldots\ \ssy_k\right)$  with $\ssy_1 \ge \ssy_2 \ge ...\ge \ssy_k$.
 Here we regard  $\ssy$ as an {\em unordered  partition} of $n$, though for convenience we
 have listed its elements in decreasing order, and we denote it  $\ssy \vdash n$.
 The conjugacy classes on $S_n$ consist of all elements $g$ with a fixed cycle structure
 and we denote them $C_{\ssy}$. For a partition $\ssy \vdash n$ we  let
  $$
  c_i=  c_i(\ssy) := | \{ j: \ssy_j= i\}|
  $$
   count  its number of parts of size $i$, and we sometimes denote it 
  by  the bracket notation
   $\ssy  = \langle 1^{c_1}, 2^{c_2}, \cdots ,n^{c_n}\rangle,$
  with only $c_i>0$ included.

%****************
% Definition 2.2
%***************

\begin{defi}\label{de21a}
 {\em The  {\em $z$-splitting measure} $\nu_{n,z}(g)$ for $g \in S_n$  is given by
 \begin{equation}\label{101}
 \nu_{n, z}^{\ast}(g) :=
  \frac{1}{n!} \cdot \frac{1}{z^{n-1}(z-1)} \prod_{i=1}^n i^{c_i} c_i! \binom{M_i( z)}{c_i(\ssy)},
  \end{equation}
 where for a complex number $w$ we interpret 
 $\binom{w}{k} := \frac{(w)_k}{ k!}= \frac{w(w-1) \cdots (w-k+1)}{k!}$.
 }
 \end{defi}

For each fixed $g \in S_n$ the quantity $ \nu_{n, z}^{\ast}(g) $
is a rational function of $z$, and is well-defined away from the polar set,
which is contained in $z=0, 1$.
  The splitting measure of an
  individual  element  $g$ depends only on its cycle type $\ssy=\ssy(g)$, 
  so  is constant on
conjugacy classes $C_{\ssy}$ of $S_n$. Using the well known formula
\begin{equation}\label{112a}
|C_{\ssy}| = n!  \prod_{i=1}^n \,  \frac{i^{-c_i(\ssy)}}{c_i(\ssy)!},
\end{equation}
for the size of conjugacy classes \cite[Proposition 1.3.2]{Stanley:2012}, we obtain
\begin{equation}\label{113a}
\nu_{n, z}^{\ast}(C_\ssy) := \sum_ {g \in C_{\ssy}} \nu_{n, z}^{\ast}(g)=
\frac{1}{z^{n-1}(z-1)} \prod_{i=1}^n \binom{M_i(z)}{c_i(\ssy)}.
\end{equation}

 Properties of these measures are studied in Section \ref{sec3}.
  The measures are defined by the right side of \eqref{113a}  as complex-valued measures
 for all $z$
  on the Riemann sphere, excluding $z=0$.
The definition implies that  they
 have total mass one, in the sense that
 $$
 \sum_{g \in S_n} \nu_{n, z}^{\ast} (g) = 1.
 $$
In this paper we  restrict to real values  $z=t$, in which case $ \nu_{n, t}^{\ast} (g)$
 in general  defines a signed measure on $S_n$.
  %%%%%%%%%%%%%%%%%%%%%%%%
  % of total signed mass one.
  %%%%%%%%%%%%%%%%%%%%%%%%
  In Section \ref{sec34a} we prove results specifying positive   real $z$-values
 where the $z$-splitting measure is nonnegative.
 In particular we show nonnegativity of the measure holds for all 
 positive integers $t=m \ge 2$.
 
 %%%%%%%%%%%%%%%%%%%%%%%
 % Theorem 2.3
 %%%%%%%%%%%%%%%%%%%%%%%
 \begin{thm}\label{th20a}
 Let $n \ge 2$. The $z$-splitting measures $\nu_{n, z}^{\ast}$ have
 the following properties, for positive real parameters $z=t>1$.
\begin{enumerate}
\item
%[(P)]
For all real $t > n-1$, one has
$$
\nu_{n, t}^{\ast}(g) > 0~~\mbox{for all}~~ g \in S_n,
$$ 
For these parameter values  $\nu_{n, t}(\cdot)$ is  a probability measure  with full support
on $S_n$.
\item
%[(P2)]
For integers $k=2, 3, ... , n-1,$ one has
$$
\nu_{n, k}^{\ast}(g) \ge 0~~\mbox{for all}~~ g \in S_n,
$$ 
so that $\nu_{n, k}^{\ast}(\cdot)$ is  a probability measure on $S_n$. For these parameter values
this measure does not have full support on $S_n$.  It is zero on the conjugacy class of the identity element
 $C_{\langle 1^n\rangle}$.

\item
%[(P3)]
As $t \to \infty$ through positive real values,  one has
$$
\lim_{t \to \infty} \nu_{n, t}^{\ast}(g) = \frac{1}{n!}.
$$
\end{enumerate}
\end{thm}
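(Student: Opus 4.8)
The plan is to read all three statements directly off the product formula \eqref{101} for $\nu_{n,t}^{\ast}(g)$ (and its conjugacy-class version \eqref{113a}), controlling the binomial factors $\binom{M_i(t)}{c_i(\ssy)}$ by two elementary facts. First, every partition $\ssy \vdash n$ satisfies $\sum_i i\, c_i(\ssy) = n$, so $c_i(\ssy) \le n/i$ for each $i$, with equality $c_1(\ssy) = n$ only when $\ssy = \langle 1^n\rangle$. Second, since $M_1(t) = t$ and, for $i \ge 2$, $M_i(t) = \tfrac1i\sum_{d\mid i}\mu(d)t^{i/d}$ has leading term $t^i/i$ with all other powers of $t$ of degree at most $i/2$, a crude lower bound of the form $M_i(t) \ge \tfrac1i\bigl(t^i - \sum_{1\le j\le i/2} t^j\bigr)$ holds for real $t \ge 2$; the exact values $M_2(t)=\tfrac12 t(t-1)$ and $M_3(t)=\tfrac13(t^3-t)$ are also convenient.

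\emph{Part (1).} I would show that for real $t > n-1$ every factor of $\prod_{i=1}^n i^{c_i(\ssy)} c_i(\ssy)!\,\binom{M_i(t)}{c_i(\ssy)}$ is strictly positive; since also $\tfrac{1}{t^{n-1}(t-1)}>0$ for $t>1$, this gives $\nu_{n,t}^{\ast}(g)>0$, and combined with the total-mass-one identity $\sum_g \nu_{n,t}^{\ast}(g)=1$ it makes $\nu_{n,t}^{\ast}$ a probability measure with full support. For a factor with $c_i(\ssy)\ge 1$, positivity of $\binom{M_i(t)}{c_i(\ssy)}$ follows from $M_i(t) > c_i(\ssy)-1$. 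For $i=1$ this reads $t > c_1(\ssy)-1$, which holds for every partition precisely because $t > n-1 \ge c_1(\ssy)-1$; this case forces the threshold and is sharp, since $\binom{t}{n}<0$ for $t$ slightly below $n-1$. For $i\ge 2$ one has $c_i(\ssy)\le n/i$, so it suffices that $M_i(t) > n/i - 1$, and the lower bound above reduces this to an elementary inequality (essentially $(n-1)^i \gg n$) with ample room for $n\ge 3$; the case $n=2$ is immediate.

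\emph{Part (2).} For an integer $k\ge 2$, each $M_i(k)$ is a nonnegative integer — this is the enumerative interpretation of the necklace polynomial (counting aperiodic necklaces, equivalently Lyndon words, on $k$ symbols) recalled in Section \ref{sec31} — so each $\binom{M_i(k)}{c_i(\ssy)}$ is a binomial coefficient of a nonnegative integer, hence $\ge 0$. Thus every factor in \eqref{101} is nonnegative and $\tfrac1{k^{n-1}(k-1)}>0$, giving $\nu_{n,k}^{\ast}(g)\ge 0$; with total mass one this is a probability measure. For the identity class, \eqref{113a} gives $\nu_{n,k}^{\ast}(C_{\langle 1^n\rangle}) = \tfrac1{k^{n-1}(k-1)}\binom{M_1(k)}{n} = \tfrac1{k^{n-1}(k-1)}\binom{k}{n}$, which vanishes because $k\le n-1<n$; hence the support is proper, consistent with the sharpness noted in part (1).

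\emph{Part (3).} This is a leading-order expansion as $t\to\infty$. From $M_i(t) = \tfrac1i t^i + O\!\bigl(t^{\lfloor i/2\rfloor}\bigr)$ one gets $i^{c_i(\ssy)} c_i(\ssy)!\,\binom{M_i(t)}{c_i(\ssy)} = t^{\,i\,c_i(\ssy)}\bigl(1+O(t^{-1})\bigr)$; multiplying over $i$ and using $\sum_i i\,c_i(\ssy)=n$ yields $\prod_i i^{c_i(\ssy)} c_i(\ssy)!\,\binom{M_i(t)}{c_i(\ssy)} = t^{\,n}\bigl(1+O(t^{-1})\bigr)$, while $\tfrac1{t^{n-1}(t-1)} = t^{-n}\bigl(1+O(t^{-1})\bigr)$, so $\nu_{n,t}^{\ast}(g) = \tfrac1{n!}\bigl(1+O(t^{-1})\bigr)\to\tfrac1{n!}$. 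The only genuine work is in part (1): producing a single clean lower bound on $M_i(t)$ that yields $M_i(t)>c_i(\ssy)-1$ uniformly over all $i$ and all $\ssy\vdash n$ at once. The $i=1$ term is what pins down the exact threshold $t=n-1$ and is the delicate point; for $i\ge 2$ there is so much slack that essentially any crude estimate suffices, and parts (2) and (3) are then formal.
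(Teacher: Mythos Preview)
Your proposal is correct and follows essentially the same route as the paper: for part (1) you reduce to the inequality $M_i(t) > c_i(\ssy)-1$ via $c_i(\ssy)\le n/i$ and a crude lower bound on $M_i(t)$ (the paper packages this as Lemma~\ref{le32}(2)); for part (2) you use integrality and nonnegativity of $M_i(k)$ to make each binomial factor $\ge 0$ and then read off the vanishing at $\ssy=\langle 1^n\rangle$ from $\binom{k}{n}=0$; and part (3) is the same leading-term computation. Your remark that the $i=1$ term pins down the threshold $t=n-1$ and is sharp is a nice addition not made explicit in the paper.
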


 In Section \ref{sec34b} we prove a complementary result
 specifying negative  real  $z$-values
 where the $z$-splitting measure is nonnegative.
 In particular, nonnegativity holds  for all negative
 integers  $m \le -1$ (Theorem \ref{th20b}). This result is
 not used elsewhere in the paper.
 
 We also note that a later result (Theorem \ref{th25}) below refines case (1) of Theorem \ref{th20a} to characterize 
  for each $n$ all pairs $(p, \ssy)$ with 
$p$ a prime and   $\nu_{n, p}^{\ast}(C_{\ssy}) =0.$

%#######################################################################
%Subection 2.2: 
%#######################################################################

\subsection{Prime Splitting Densities of $S_n$-Polynomials}\label{sec22}

  Let $f(x) \in \ZZ[x]$ be a monic polynomial. 
  Consider for a  prime $p$ the splitting of  such polynomials
$(\bmod~p)$, viewed in $\FF_p[X]$.

More generally for $q=p^f$,  any monic  $f(x) \in \FF_q[x]$ factors uniquely as 
$f(x) = \prod_{i=1}^kg_i(x)^{e_i}$, where the $e_i$ are positive integers and the 
$g_i(x)$ are distinct, monic, irreducible, and non-constant. 
We may define the \emph{splitting type}  of such a polynomial (following Bhargava \cite{Bhargava:2007}) to be the formal symbol 
$$
\ssy_q(f) := \left( \deg(g_1)^{e_1},\ \deg(g_2)^{e_2},\ \ldots ,\deg(g_k)^{e_k} \right)
$$
 where $k$ is the number of distinct irreducible factors of $f(x)$. 
 Here we order the degrees in decreasing order.
 We  let $\tns$ denote the set
of all possible formal symbols for degree $n$ polynomials,
which we call {\em splitting symbols}.
 Thus $T_3 =\{ (111), (21), (3), (1^2 1), (1^3)\}$.
 Using this definition, given any monic $f(x) \in \ZZ[x]$ and any prime $p$,
we may assign to it a splitting type 
$\ssy_p (f) \in \tns$. 

This paper mainly restricts to {\em square-free splitting types}, which are those having all $e_i=1$.
We define $\tnss \subset \tns$ to denote the set of such splitting types.
  Thus $T_3^{\ast}= \{ (111), (21), (3)\}.$
  Each  element $\ssy :=  \left(\ssy_1,\ \ssy_2,\ \ldots\ \ssy_k\right)  \in \tnss$ 
  with $\ssy_1 \ge \ssy_2 \ge ...\ge \ssy_k$  specifies a partition of
 $n$, to which there is 
 associated  a unique conjugacy class  $C_{\ssy} \subset S_n$.
The conjugacy class $C_{\ssy}$ is the set of all elements of $S_n$ whose 
cycle lengths are equal to the (unordered) numbers 
 $\ssy_1, \ldots, \ssy_k$.  In  this case we  will   refer also to $C_{\ssy}$ as a  {\em splitting type},
 and if $\ssy_p(f) \in \tnss$ then we will write $\mu_p(f) = C_{\ssy}$.

Given any positive integer $n$ and a positive number
  $B$ we let $\mathcal{F}_{n}(B)$ denote the collection of all degree $n$ monic
  polynomials  with integer coefficients, 
  $$f(x) = x^n + \sum_{j=0}^{n-1} c_j x^j \in \ZZ[x],$$
  having  coefficients bounded by $-B < c_j \le B,$ for $0 \le j \le n-1$.
  Then  let   $\mathcal{F}_{n,B,p}$
 be the subset of monic polynomials in $\mathcal{F}_{n}(B)$ having the following properties:
  \begin{enumerate}
  \item[(i)]
  The polynomial discriminant $(\Disc(f), p) =1.$
    \item [(ii)]
  All coefficients of $f(x)$ are contained in  $[-B+1, B]$. This implies that   the
  polynomial discriminant
  $|\Disc(f)| \le (4B)^{n(n-1)}.$
   \item[(iii)]
  $f(x)$ is irreducible over $\QQ$
  and  the  degree $n$ number field $K_f =\QQ(\theta_f)$ generated by one root  has normal closure
  with Galois group $S_n$.

 \end{enumerate}
 The allowed splitting types $(\bmod \, p)$ of polynomials in $\mathcal{F}_{n}(B;p)$
are constrained by the  requirement (1) on the discriminant  to belong to $\tnss$, i.e. to
 be square-free $(\bmod~p)$.
For this case we show: \medskip

%****************
% Theorem 2.4
%***************

\begin{thm}~\label{th12} {\rm (Limiting Splitting Densities)}
Let  $n \ge 2$ be given. Then:

(1) For each  prime $p$, there holds
\begin{equation}\label{111}
\lim_{ B \to \infty} \frac{ \#\{ f(x) \in \mathcal{F}_{n}(B; p)\}}
{\#\{ f(x) \in \mathcal{F}_{n}(B)\}} = 1 - \frac{1}{p}.
\end{equation}

(2) For  each (square-free) splitting type $\ssy \in \tnss$, there holds
\begin{equation}\label{112}
\lim_{ B \to \infty}
\frac{\#\{f(x) \in \mathcal{F}_{n}(B;p)\mid (p) \mbox{ has splitting type } ~~
C_{\ssy} \}}{\#\{ f(x) \in \mathcal{F}_{n}(B;p)\}}
 =  \nu_{n, p}^{*}(C_{\ssy}),
\end{equation}
where $\nu_{n, p}^{\ast}$ is the 
splitting measure for $n$ with parameter $t=p$, i.e.
\begin{equation}\label{113}
\nu_{n,p}^{*}(C_\ssy)= \frac{1}{p^{n-1} (p-1)} \prod_{i=1}^n {\nip \choose c_i(\ssy)}.
\end{equation}
\end{thm}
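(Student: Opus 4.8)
The plan is to reduce the entire statement to a counting problem over $\FF_p$, feeding in two external facts: van der Waerden's theorem \cite{Waerden:1936}, that all but $o(B^n)$ of the polynomials $f\in\mathcal{F}_{n}(B)$ satisfy condition (iii), and Dedekind's theorem relating the factorization of $(p)$ in $\mathcal{O}_{K_f}$ to the factorization of $f \bmod p$, so that when $p\nmid\Disc(f)$ both the splitting of $(p)$ and the conjugacy class of the Artin symbol $[\frac{K_f/\QQ}{(p)}]$ in the Galois group $S_n$ are read off from the factorization of $f \pmod p$ in $\FF_p[x]$. First I would record the equidistribution of reductions $\pmod p$: for a fixed monic $\bar g\in\FF_p[x]$ of degree $n$ the number of $f\in\mathcal{F}_{n}(B)$ with $f\equiv\bar g\pmod p$ is $\prod_{j=0}^{n-1}\#\{c\in(-B,B]\cap\ZZ:\ c\equiv\bar g_j\pmod p\}$, and since any block of $2B$ consecutive integers meets each residue class mod $p$ either $\lfloor 2B/p\rfloor$ or $\lceil 2B/p\rceil$ times, this is $(2B)^n p^{-n}$ up to a relative error $O(1/B)$ depending only on $n$ and $p$. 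Hence
\[
\frac{\#\{f\in\mathcal{F}_{n}(B):\ f\bmod p\in\mathcal{S}\}}{\#\mathcal{F}_{n}(B)}\ \longrightarrow\ \frac{|\mathcal{S}|}{p^n}
\]
for any $B$-independent set $\mathcal{S}$ of monic degree $n$ polynomials over $\FF_p$.

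For part (1): condition (i), $p\nmid\Disc(f)$, depends only on $f\bmod p$ and holds exactly when $f\bmod p$ is square-free in $\FF_p[x]$; for $n\ge 2$ there are $p^n-p^{n-1}$ square-free monic degree $n$ polynomials over $\FF_p$ (recorded in Section \ref{sec35}), so by the displayed limit the density of $\mathcal{F}_{n}(B)$ satisfying (i) tends to $1-1/p$. Condition (ii) is automatically satisfied by every member of $\mathcal{F}_{n}(B)$ by definition, so it imposes nothing. For condition (iii), van der Waerden's theorem gives that the $f\in\mathcal{F}_{n}(B)$ failing to be irreducible $S_n$-polynomials number $o(B^n)$; having density $0$ in $\mathcal{F}_{n}(B)$, such $f$ also have density $0$ inside the positive-density subset cut out by (i). Together these prove \eqref{111}.

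For part (2), fix a square-free splitting type $\mu\in\tnss$ with part-multiplicities $c_i=c_i(\mu)$. Inside the square-free locus --- the density-$(1-1/p)$ set of $f$ satisfying (i) --- the reductions $f\bmod p$ are equidistributed over the square-free monic polynomials of $\FF_p[x]$, so the fraction of $f\in\mathcal{F}_{n}(B;p)$ whose reduction has factorization type $\mu$ tends to $N_p(\mu)/(p^n-p^{n-1})$, where $N_p(\mu)$ counts the square-free monic degree $n$ polynomials over $\FF_p$ whose irreducible factors have degrees forming $\mu$; as in part (1), discarding the non-$S_n$-polynomials of condition (iii) does not change this limit. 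By Dedekind's theorem, for an $S_n$-polynomial $f$ with $p\nmid\Disc(f)$ the prime $(p)$ has splitting type $C_\mu$ in $K_f$ precisely when $f\bmod p$ has factorization type $\mu$, so the numerator in \eqref{112} is exactly that count. Finally, by Lemma \ref{le31} there are $M_i(p)$ monic irreducibles of degree $i$ over $\FF_p$, and a square-free polynomial of type $\mu$ is assembled by choosing, for each $i$, an unordered $c_i$-subset of these; hence $N_p(\mu)=\prod_{i=1}^{n}\binom{M_i(p)}{c_i}$. Dividing by $p^n-p^{n-1}=p^{n-1}(p-1)$ and comparing with \eqref{113a} gives \eqref{112} and the explicit form \eqref{113}.

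The substantive inputs here are van der Waerden's density theorem and Dedekind's factorization theorem, both of which I am assuming; the remainder is bookkeeping. I expect the only points needing care to be the uniformity in $\bar g$ of the congruence count (harmless, since the number of residue classes being summed is fixed as $B\to\infty$) and the observation that the density-$0$ exceptional set of condition (iii) cannot concentrate in any fixed residue class mod $p$ --- immediate, because van der Waerden's bound $o(B^n)$ is unaffected by imposing a congruence condition. If one wanted explicit error terms rather than a bare limit, the genuine bottleneck would be the effective form of van der Waerden's theorem, for which the bounds recorded in Section \ref{sec41} would be used.
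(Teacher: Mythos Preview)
Your argument is correct and follows essentially the same route as the paper: reduce to counting residue classes $\pmod p$ via box equidistribution, use Proposition~\ref{pr33} for the finite-field counts ($p^n-p^{n-1}$ square-free polynomials and $\prod_i\binom{M_i(p)}{c_i}$ of type $\mu$), and invoke van der Waerden to show condition (iii) removes only a density-zero set. The paper packages this as the $r=1$ case of Theorem~\ref{th42}, using Gallagher's quantitative version (Theorem~\ref{th41}) to obtain explicit error terms in Lemma~\ref{le43} and Theorem~\ref{th44}, but for the bare limit your qualitative van der Waerden input suffices; your invocation of Dedekind's theorem is a harmless gloss, since in the paper's usage ``splitting type of $(p)$'' simply means the factorization type of $f\bmod p$.
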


%%%%%%%%%%%%%%%%%%%%%%%%%%%%%%%%%%%%%
%We note that the distribution $\nu_{n,p}^{\ast} $ depends on both parameters $n$ and $p$.
%%%%%%%%%%%%%%%%%%%%%%%%%%%%%%%%%%%%%

This result is proved in Section \ref{sec4}; it is   a special case $r=1$ of Theorem \ref{th42}
which applies more generally to finite sets $\sS=\{p_1, p_2, ..., p_r\}$ of primes.
 In Section \ref{sec5} we give a further generalization of the
result to algebraic number fields.

%#######################################################################
%Subection 2.3: 
%#######################################################################

\subsection{Existence of $S_n$-Number Fields with Prescribed Prime Splitting}\label{sec23}

We also  show  there are infinitely many $S_n$-number fields with prescribed prime splitting
at any finite set $\sS$ of primes, of those types allowed by the splitting measures. 
The splitting measures impose some extra constraints associated to the existence of monogenic
orders in the $S_n$-number fields having discriminants relatively prime to given elements. \medskip

%****************
% Theorem 2.5
%***************
\begin{thm}~\label{th13}
Let $n \ge 2$ be given, let $\sS = \{p_1, ..., p_r\}$ denote a finite set of (distinct) primes, and let
$\SSY =\{ \ssy_1, ..., \ssy_r\}$  be a prescribed set of (not necessarily distinct) splitting symbols
for these primes.  Then the  following conditions are equivalent.

\begin{enumerate}
\item[(1)]
The positive measure condition
$$
\nu_{n, p_i}^{\ast}(C_{\ssy_i}) >0  \,\, \mbox{for}\, \, 1 \le i \le r
$$
holds.

\item[(2)]  There exists an
$S_n$-number field $K$  having the following two properties:

\begin{enumerate}
\item[(P1)] The field $K$ contains a monogenic order $O= \ZZ[1, \theta, ...\theta^{n-1}]$
whose discriminant is relatively prime to $p_1p_2 \cdots p_r.$
\item[(P2)]
The  Galois closure $K^{spl}$ of $K/\QQ$ 
  is unramified at all prime ideals  above those in $\sS$ and  the primes 
 in $\sS$ have  prescribed Artin symbols
$$
\Big[ \frac{K^{spl}/\QQ}{(p_i)}\Big] = C_{\ssy_i}\,, ~~~~~~1 \le i \le r.
$$
\end{enumerate}
\item[(3)] There exist infinitely many $S_n$-number fields $K$ having
properties (P1) and (P2).
\end{enumerate}
\end{thm}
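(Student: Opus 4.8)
The plan is to prove the cycle $(3)\Rightarrow(2)\Rightarrow(1)\Rightarrow(3)$. The implication $(3)\Rightarrow(2)$ is immediate. For $(2)\Rightarrow(1)$, let $\theta$ generate the monogenic order $O=\ZZ[\theta]$ in (P1) and let $f\in\ZZ[x]$ be its monic minimal polynomial, so that $\Disc(f)=\Disc(O)$ is prime to $p_1\cdots p_r$. Then $f\bmod p_i$ is separable, and by the Dedekind--Kummer theorem its factorization type in $\FF_{p_i}[x]$ equals the splitting type of the ideal $(p_i)$ in $O_K$, which by (P2) is $C_{\ssy_i}$. Hence, writing $\ssy_i=\langle 1^{c_1},\dots,n^{c_n}\rangle$, the reduction $f\bmod p_i$ is a product of $c_j=c_j(\ssy_i)$ \emph{distinct} monic irreducibles of degree $j$ for each $j$, which forces $M_j(p_i)\ge c_j(\ssy_i)$ for all $j$. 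The product formula \eqref{113} then gives $\nu_{n,p_i}^{*}(C_{\ssy_i})=\frac{1}{p_i^{n-1}(p_i-1)}\prod_{j}\binom{M_j(p_i)}{c_j(\ssy_i)}>0$, since each binomial coefficient is a positive integer under these inequalities. This is (1), and it also shows why the existence of a single field certifies positivity of the measure.

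For $(1)\Rightarrow(3)$ I would argue by contradiction. Suppose only finitely many $S_n$-fields $K_1,\dots,K_m$ (possibly none) satisfy (P1) and (P2). For each $j$ choose a prime $q_j$ with $q_j>n-1$, $q_j\notin\sS$, $q_j\ne q_{j'}$ for $j'<j$, and $q_j$ \emph{not} split completely in $K_j$; this is possible because the primes splitting completely in $K_j$ have density $1/n!<1$. By Theorem \ref{th20a}(1), $\nu_{n,q_j}^{*}(C_{\langle 1^n\rangle})>0$, so condition (1) holds for the enlarged data $\sS\cup\{q_1,\dots,q_m\}$ with splitting types $C_{\ssy_i}$ at $p_i$ and $C_{\langle 1^n\rangle}$ at each $q_j$. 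By Theorem \ref{th12} and its multi-prime refinement (Theorem \ref{th42}), the number of $f\in\mathcal{F}_{n}(B)$ that are $S_n$-polynomials with $\Disc(f)$ prime to all these primes and with these prescribed splitting types is asymptotic to a positive multiple of $(2B)^n$, so for $B$ large such an $f$ exists. Its field $K_f$ is an $S_n$-field satisfying (P1) (as $\Disc(\ZZ[\theta_f])=\Disc(f)$ is prime to $p_1\cdots p_r$) and (P2) (since $p_i\nmid\Disc(f)$ forces $p_i$ unramified in $K_f$, hence in its Galois closure $K_f^{spl}$, and the Artin symbol $\big[\frac{K_f^{spl}/\QQ}{(p_i)}\big]$ is read off from the factorization type of $f\bmod p_i$, namely $C_{\ssy_i}$). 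But $q_j$ splits completely in $K_f$ and not in $K_j$, so $K_f\ne K_j$ for every $j$, contradicting finiteness. Hence infinitely many such fields exist.

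The one genuinely quantitative input is the asymptotic count of Theorem \ref{th12} together with its extension to several primes; the remaining work is the dictionary tying together the three meanings of ``splitting type'' --- factorization of $f\bmod p$, factorization of $(p)$ in $O_{K_f}$, and the Artin symbol --- under the hypothesis $p\nmid\Disc(f)$, plus the elementary Chebotarev remark that detaches each new $K_f$ from the earlier fields. I expect the subtlest point to be this dictionary, in particular checking that a prime unramified in $K_f$ stays unramified in the Galois closure and that the Artin symbol is computed correctly from the polynomial reduction. (Alternatively, one can replace the Chebotarev step by a geometry-of-numbers estimate: bounding the coefficients of the minimal polynomial of $\theta\in O_K$ confines $\theta$ to a thin region of $O_K\otimes\RR$ --- all conjugates of $\theta$ are at most $1+B$ while $|N(\theta)|\le B$ --- of volume $o(B^n)$, so each fixed degree-$n$ field contributes only $o(B^n)$ polynomials to $\mathcal{F}_{n}(B)$, which already forces the number of fields to be infinite once the total count is $\gg(2B)^n$.)
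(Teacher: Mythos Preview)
Your proof is correct and follows the same overall architecture as the paper's (the cycle $(3)\Rightarrow(2)\Rightarrow(1)\Rightarrow(3)$, with Theorem~\ref{th42} supplying the existence of $S_n$-polynomials with prescribed reductions), but two of your implications are handled differently.

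For $(2)\Rightarrow(1)$ the paper invokes Theorem~\ref{th42}: the congruence conditions defining the splitting types are satisfied on a positive-density set of lattice points, so the left side of \eqref{312} is positive, forcing the right side to be positive. Your argument is more direct and more elementary: the mere existence of a squarefree $f\bmod p_i$ with factorization type $\ssy_i$ already forces $M_j(p_i)\ge c_j(\ssy_i)$ for every $j$, so each binomial coefficient in \eqref{113} is a positive integer. This bypasses the asymptotic machinery entirely and is arguably the cleaner route.

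For $(1)\Rightarrow(3)$ the paper argues constructively: adjoin $k$ auxiliary primes $p_{r+1}<\cdots<p_{r+k}$, all at least $n$, and note that since $S_n$ has at least two conjugacy classes one obtains at least $2^k$ fields distinguishable by their Artin symbols at the auxiliary primes; letting $k\to\infty$ gives infinitely many. Your argument is by contradiction, choosing for each putative $K_j$ an auxiliary prime $q_j$ that does \emph{not} split completely in $K_j$ and then forcing it to split completely in the new $K_f$. Both routes rest on Theorem~\ref{th42}; the paper's avoids any density input (even the weak Chebotarev-type statement you use), while yours has the virtue of producing a single new field that is visibly distinct from every field on a given finite list. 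Your parenthetical geometry-of-numbers alternative (each fixed $K$ contributes $o(B^n)$ polynomials to $\mathcal F_n(B)$) is also a valid way to conclude, and is perhaps the most transparent: it shows directly that a positive proportion of $(2B)^n$ cannot be accounted for by finitely many fields.
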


The condition (1) automatically holds when all $p_i \ge n$,
because the probability measure $\nu_{p_i, n}^{\ast}$ then
has full support on the group $S_n$. However for  primes $2 \le p<n$  there are  restrictions on
the allowed splitting behavior. This restriction has to do with the non-existence of
monogenic maximal orders satisfying (P1) for $S_n$-number fields having  specific prime factorization
at small prime ideals. The polynomials $f(x)$ generating such fields have
{\em essential discriminant divisors}\footnote{
Related concepts  include  
the {\em inessential discriminant
divisor} $I(K)$ of a field $K$  (Tormhein \cite{Tornheim:1955}),
also called the {\em non-essential discriminant divisor} of $K$ (Sliwa \cite{Sliwa:1982}).
Here $I(K) = \gcd_{\theta \in O_K} i(\theta),$  where
$i(\theta) := [O_K: \ZZ[1, \theta, ..., \theta^{n-1}]].$ The {\em essential discriminant divisors} are
exactly the prime divisors of  $I(K)$.}, 
as defined in  Cohn \cite[Defn. 9.55, Lemma 10.44c]{Cohn:1978} and Cohen \cite[p. 197]{Cohen:1993}. 
A famous example due to Dedekind \cite{Dedekind:1878} (see \cite[Exercise 9.4; Lemma 10.44c]{Cohn:1978})
 is  an  $S_3$-number field $K$ for which the prime ideal $(2)$ splits completely in $K$;  all monogenic
orders then have an even index, and correspondingly $\nu_{3,2}^{\ast}([1^3]) = 0$.
However it is  known that infinitely  many $S_3$-number fields $K$ 
 exist  in which the ideal $(2)$ splits completely in the maximal order.
This result follows from  results of Bhargava for $n=3$ discussed in Section \ref{sec24}.
 Such fields are not covered by Theorem \ref{th13}.

Theorem \ref{th13} allows us to characterize
the splitting measures for prime values  $t= p \ge 2$ having probability $0$
in terms of field-theoretic data. 

%%%%%%%%%%%%%%%%%%%%%%%%%%%%%%%%%%%%%%%
%Recall that an algebraic number  field $K$ of degree $n$ has a prime $p$ as an {\em essential discriminant divisor}
%if for each $\theta \in O_K$ with $K = \QQ(\theta)$, the index $i(\theta):= [ O_K: \ZZ[1, \theta, ..., \theta^{n-1}]$ 
%is divisible by $p$,  see Cohn \cite[Definition 9.55]{Cohn:1978}, Cohen \cite[p. 197]{Cohen:1993}.
%Any such primes $p$ necessarily satisfy $p \le n-1$.
%%%%%%%%%%%%%%%%%%%%%%%%%%%%%%%%%%%%%%%%

%%%%%%%
% Theorem 2.6
%%%%%%%%
\begin{thm}\label{th25}
 For $p$ a prime, and a splitting type $\ssy \vdash n$,  
for fixed $n \ge 2$, the following three conditions are equivalent.
\begin{enumerate}
\item[(C1)] The splitting measure at $t=p$ has
$$
\nu_{n, p}^{\ast} ( C_\ssy) = 0.
$$
\item[(C2)]
There are no degree  $n$ monic  polynomials $f(x) \in \ZZ[x]$ 
 with $f(x) \,\pmod  p$ having a square-free factorization
of splitting type $C_{\ssy}$.

\item[(C3)] All $S_n$-number fields $K$  in which  $(p)$ is unramified and 
has splitting type $\ssy$ necessarily have $p$ as an essential discriminant divisor.
\end{enumerate}
\end{thm}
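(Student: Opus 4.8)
The plan is to prove the equivalence $(C1)\Leftrightarrow(C2)\Leftrightarrow(C3)$ by a cycle of implications, leaning heavily on the combinatorial formula \eqref{113a} and on Theorem \ref{th13}.

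\textbf{Step 1: $(C1)\Leftrightarrow(C2)$ via the product formula.}
First I would unwind the defining formula \eqref{113a},
$$
\nu_{n, p}^{\ast}(C_\ssy) = \frac{1}{p^{n-1}(p-1)} \prod_{i=1}^n \binom{M_i(p)}{c_i(\ssy)}.
$$
Since $M_i(p)$ is a nonnegative integer (it counts monic irreducible degree $i$ polynomials over $\FF_p$, by Lemma \ref{le31}), each factor $\binom{M_i(p)}{c_i(\ssy)}$ is a nonnegative integer, and the product vanishes if and only if $c_i(\ssy) > M_i(p)$ for some $i$. So $(C1)$ holds exactly when there is some $i$ for which the partition $\ssy$ demands more parts of size $i$ than there are irreducible polynomials of degree $i$ over $\FF_p$. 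On the other hand, a monic $f(x)\in\ZZ[x]$ of degree $n$ with $f \bmod p$ having square-free factorization of type $C_\ssy$ is, after reduction, exactly a product of $c_i(\ssy)$ distinct monic irreducibles of degree $i$ over $\FF_p$ for each $i$ (and any such product over $\FF_p$ lifts to $\ZZ[x]$ by choosing integer coefficient representatives). Such a product exists if and only if $M_i(p)\ge c_i(\ssy)$ for all $i$. Hence $(C2)$ (nonexistence) is equivalent to the failure of that inequality for some $i$, which is equivalent to $(C1)$. This step is essentially a bookkeeping argument; the only mild care needed is the lift from $\FF_p[X]$ to $\ZZ[x]$, which is harmless since we do not constrain the size of the coefficients here.

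\textbf{Step 2: $(C2)\Rightarrow(C3)$.}
Suppose $(C2)$ holds and let $K$ be an $S_n$-number field in which $(p)$ is unramified with splitting type $\ssy$; I want to show $p \mid I(K)$, i.e. $p$ divides $i(\theta)=[O_K:\ZZ[1,\theta,\dots,\theta^{n-1}]]$ for every $\theta\in O_K$ generating $K$. Fix such a $\theta$ with minimal polynomial $f(x)\in\ZZ[x]$. By the Dedekind--Kummer theorem, if $p\nmid i(\theta)$ then the factorization type of $f \bmod p$ equals the splitting type of $(p)$ in $O_K$, which is $\ssy$, and moreover since $(p)$ is unramified this factorization is square-free; that would produce a polynomial of splitting type $C_\ssy$, contradicting $(C2)$. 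Hence $p \mid i(\theta)$ for every generator $\theta$, so $p\mid I(K)$ and $p$ is an essential discriminant divisor of $K$. (If $\ssy$ is not square-free, then no unramified prime can have that "splitting type" in the sense intended, and $(C3)$ holds vacuously; in the square-free case the argument above applies.)

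\textbf{Step 3: $(C3)\Rightarrow(C1)$, by contrapositive via Theorem \ref{th13}.}
Assume $\neg(C1)$, i.e. $\nu_{n,p}^{\ast}(C_\ssy)>0$. Apply Theorem \ref{th13} with $\sS=\{p\}$ and $\SSY=\{\ssy\}$: condition (1) of that theorem holds, so condition (2) gives an $S_n$-number field $K$ satisfying (P1) and (P2). Property (P1) furnishes a monogenic order $O=\ZZ[1,\theta,\dots,\theta^{n-1}]$ with $\gcd(\Disc(O),p)=1$, hence $p\nmid [O_K:O]=i(\theta)$, so $p$ is \emph{not} a common index divisor — $p\nmid I(K)$ — and thus $p$ is not an essential discriminant divisor of $K$. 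Property (P2) says $(p)$ is unramified in $K^{spl}$ with Artin symbol $C_\ssy$, which forces $(p)$ to be unramified in $K$ with splitting type $\ssy$. So $K$ witnesses the failure of $(C3)$. This completes the cycle.

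\textbf{Main obstacle.}
The genuinely substantive input is Theorem \ref{th13}, specifically the implication (1)$\Rightarrow$(2): producing an actual $S_n$-number field with a monogenic order of discriminant prime to $p$ and the prescribed Artin symbol. Everything else here is Dedekind--Kummer bookkeeping. Since I am permitted to assume Theorem \ref{th13}, the only real care in writing this proof is the consistent handling of the non-square-free case for $\ssy$ (where $(C1)$, $(C2)$, $(C3)$ should all be interpreted so that the equivalence still reads correctly — in fact $\nu_{n,p}^\ast(C_\ssy)$ is only defined for $\ssy$ a partition, i.e. a conjugacy class, so one should state at the outset that $\ssy$ ranges over square-free splitting symbols / partitions of $n$), and making sure the Dedekind--Kummer hypothesis "$p\nmid i(\theta)$" is invoked in the correct direction in both Step 2 and Step 3.
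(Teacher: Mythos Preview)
Your proof is correct and follows essentially the same strategy as the paper: the equivalence $(C1)\Leftrightarrow(C2)$ is the combinatorial observation that $N_\ssy(p)$ counts square-free factorizations of type $\ssy$ over $\FF_p$, and the link to $(C3)$ runs through Theorem~\ref{th13}. The one small difference is that for the direction $(C1)\Rightarrow(C3)$ the paper invokes the implication $(2)\Rightarrow(1)$ of Theorem~\ref{th13} (whose proof in turn relies on the density result Theorem~\ref{th42}), whereas your Step~2 gives a direct Dedekind--Kummer argument; your route is slightly more elementary for that direction, since it avoids dragging in the density machinery where it is not really needed.
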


 This  result is proved in Section \ref{sec44}. 
 The condition (C1) is vacuous for $n=1, 2$.
 This theorem  provides
 the easy-to-check  criterion (C1) for an $S_n$-number field $K$ to have $(p)$ as an
 essential discriminant divisor, via the splitting type $\ssy$ of $(p)$ in $K$.   
 Condition (C2) is a statement about  all $f(x) \in \ZZ[x]$; it does not require $f(x)$ to be an $S_n$-polynomial or  to be
   irreducible over $\QQ$.
 Our  proof does not show the existence of even a single
 field satisfying (C3) for any given pair  $(p, \ssy)$ satisfying (C1). 
 Conjecture \ref{conj51} of Bhargava below would imply that infinitely many such fields exist,
 and this conjecture is known to be true for $n \le 5$.  
 
In  Section \ref{sec5} we establish generalizations of  Theorem \ref{th13} and Theorem \ref{th25} above 
in which  the base field $\QQ$ is
replaced by an algebraic number field $K$.
These generalizations  are stated as Theorems \ref{th52}
and \ref{th53}, respectively. These generalizations are a more complicated to state, and
their proofs are straightforward, using results of S. D. Cohen \cite{Cohen:1981}.

%#######################################################################
%Subection 3: Summary
%#######################################################################

\section{Bhargava number field splitting model}\label{sec2a}

 Recall from Section \ref{sec12} that Bhargava defines an   {\em $S_n$-number field} $K/\QQ$
 to be a number field with $[K: \QQ]=n$ whose Galois closure $L$  over $\QQ$ has Galois group
$S_n$.  
Bhargava's  number field splitting  model  has  sample
space ithe set of all $S_n$-number fields $K$ of discriminant 
$|D_K| \le D$ with the uniform distribution, and his 
results and conjectures concern  the limiting behavior of splitting densities at 
a fixed prime $p$ as $D \to \infty$,
conditioned on the property that the  field $K$ be unramified  at $(p)$, i.e. $p \nmid D_K$, the (absolute) field
discriminant of $K$.

%#######################################################################
%Subection 3.1: Bhargava's Conjectures.
%#######################################################################

\subsection{Bhargava's conjectures for prime splitting in $S_n$-number fields}\label{sec24}

In 2007 Bhargava \cite{Bhargava:2007}  formulated  conjectures about the splitting of primes 
averaged over $S_n$-number fields 
ordered by the size of their field discriminants. 
 Bhargava developed his conjectures based on 
 the following principle \cite[p. 10]{Bhargava:2007}:
 
 \begin{quote}
 The expected (weighted) number of global $S_n$-number fields of discriminant
 $D$ is simply the product of the (weighted) number of local extensions of $\QQ_{\nu}$ that
 are discriminant-compatible with $D$, where $\nu$ ranges over all places of $\QQ$,
 (finite and infinite).
  \end{quote}

In this statement a {\em  local extension of $\QQ_v$}  means a degree $n$ \'{e}tale algebra $E$ over $\QQ_v$
(not necessarily a field) and {\em discriminant-compatible} means that the valuation of the discriminant
of $E$ matches that of $D$ and that, in the archimedean case,   the signs of the
discriminants match. 
We state two of his  conjectures below in order to later compare them with our results.

Firstly, given any positive integer $n$ and a positive number
  $B$ we let $\mathcal{G}_{n}(B)$ denote the collection of
  $S_n$-number fields $K$ that have discriminants $|D_K| \le B$.
Secondly, given any positive integer $n$, prime $p$ and positive number
  $B$ we let $\mathcal{G}_{n}(B;p)$ denote the collection of all degree $n$ number fields 
  $K$ such that: 
  \begin{enumerate}
  \item[(i)]
  The ideal $(p)$ is unramified in $K$;
   \item[(ii)]
 The  field discriminant $|D_K| \le B$;
  \item[(iii)]
  The degree $n$ field  $K$ over $\QQ$ 
  has normal closure having Galois group $S_n$.
  \end{enumerate}
 
 The first conjecture  of Bhargava
 concerns which fraction of $S_n$-number fields have 
 field discriminant $K$ relatively prime to $p$ \cite[Conj.\ 1.4]{Bhargava:2007}.
 
 %************
% Bhargava's conjcture 3.2
%************
\begin{conjecture}[Bhargava]
\label{conj24}
Fix a prime $p$ and a  positive integer $n$.
Then 
\begin{equation}\label{141}
\lim_{B\rightarrow \infty}
\frac{\#\{K \in  \mathcal{G}_{n}(B;p) \}}{\#\{ K \in \mathcal{G}_{n}(B)\}} = 
1- \pram_{n}(p).
\end{equation}
where $\pram_{n}( p)$ is the ``probability of ramification," given by
\begin{equation}\label{142}
\pram_{n}( p) := \frac{\sum_{k=1}^{n-1} q(k, n-k) p^{n-1-k}}{\sum_{k=0}^{n-1} q(k, n-k) p^{n-1-k}},
\end{equation}
in which $q(k,n)$ denotes the number of partitions of $k$ into at most $n$ parts.
\end{conjecture}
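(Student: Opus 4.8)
The plan is to derive \eqref{141}--\eqref{142} from Bhargava's mass-formula heuristic, which reduces the statement to a global counting asymptotic plus an unconditional local computation. The global input I would want is an asymptotic of the shape
$$
\#\{K \in \mathcal{G}_{n}(B)\} \;\sim\; c_\infty \Bigl(\,\prod_{\ell} \beta_n(\ell)\Bigr)\, B \qquad (B \to \infty),
$$
where $\ell$ runs over the primes, $c_\infty$ is an archimedean factor, and $\beta_n(\ell)$ is the local density obtained by summing the (suitably weighted) number of degree $n$ \'etale $\QQ_\ell$-algebras $E$ against $\ell^{-v_\ell(\Disc E)}$ --- the factor $\ell^{-v_\ell(\Disc E)}$ recording that pinning $v_\ell(D)=k$ thins the admissible $D$ with $|D|\le B$ by a factor $\ell^{-k}$. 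Running the same heuristic over $\mathcal{G}_{n}(B;p)$ leaves every factor $\beta_n(\ell)$ with $\ell\ne p$ untouched and replaces $\beta_n(p)$ by its unramified part $\beta_n^{\mathrm{ur}}(p)$, the $k=0$ contribution, so that in the limit
$$
1-\pram_n(p) \;=\; \frac{\beta_n^{\mathrm{ur}}(p)}{\beta_n(p)}.
$$

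Next I would carry out the local computation, which is unconditional and is essentially Bhargava's local mass formula \cite{Bhargava:2007}. Writing $E_\lambda=\prod_i \QQ_{p^{\lambda_i}}$ for the unramified \'etale algebra attached to a partition $\lambda\vdash n$, one has $|\mathrm{Aut}(E_\lambda)| = \prod_i \lambda_i \prod_j c_j(\lambda)!$ and hence
$$
\beta_n^{\mathrm{ur}}(p) \;=\; \sum_{\lambda\vdash n}\frac{1}{|\mathrm{Aut}(E_\lambda)|} \;=\; 1,
$$
the last equality being the coefficient-of-$x^n$ form of $\prod_{i\ge 1}\exp(x^i/i)=(1-x)^{-1}$ --- the same combinatorial identity underlying the necklace polynomials of Definition \ref{de20a}. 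For the full local density, the a priori infinite sum over all degree $n$ \'etale $\QQ_p$-algebras (wildly ramified ones included) collapses to the polynomial $\beta_n(p)=\sum_{k=0}^{n-1} q(k,n-k)\,p^{-k}$ in $p^{-1}$, with $q(k,n-k)$ the number of partitions of $k$ into at most $n-k$ parts; the noteworthy feature, which one checks directly in low degree, is that this polynomial is the same for every prime $p$, the contributions of the small and wildly ramified algebras at $p=2,3,\dots$ reorganizing into exactly these coefficients. Substituting both evaluations into the displayed ratio gives $1-\pram_n(p)=\bigl(\sum_{k=0}^{n-1}q(k,n-k)p^{-k}\bigr)^{-1}$, and multiplying numerator and denominator by $p^{n-1}$ (using $q(0,n)=1$) recovers \eqref{141}--\eqref{142}.

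The hard part is the global input. The existence of a linear main term $\#\{K\in\mathcal{G}_{n}(B)\}\sim C_n B$ with $C_n$ of Euler-product shape is a well-known open problem for $n\ge 6$ --- a sharp form of the folklore conjecture on counting degree $n$ number fields by discriminant (the relevant case of Malle's conjecture, with its predicted constant). For $n\le 5$ it is a theorem, and there the whole plan goes through: $n=3$ is Davenport--Heilbronn, sharpened prime-by-prime by Datskovsky--Wright, and $n=4,5$ come from Bhargava's parametrizations of quartic and quintic rings combined with geometry-of-numbers orbit counts, from which the local densities $\beta_n(p)$, hence $\pram_n(p)$, are extracted directly. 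So for $n\le 5$ this yields a complete proof, whereas for $n\ge 6$ only the heuristic derivation of the right-hand side is available, which is why the assertion stands as a conjecture rather than a theorem.
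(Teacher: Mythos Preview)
The paper does not prove this statement at all: it is stated as a conjecture of Bhargava (Conjecture~\ref{conj24}), and the only comment the paper adds is that Bhargava has established it for $n\le 5$, listing the resulting values of $\pram_n(p)$ in those cases. There is no ``paper's own proof'' to compare against.

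Your proposal is not really a proof either, and you say so yourself in the final paragraph: what you have written is a heuristic derivation of the right-hand side from Bhargava's mass-formula principle, together with the observation that the required global asymptotic is a theorem only for $n\le 5$ and open for $n\ge 6$. That is an accurate summary of the situation and matches what the paper asserts. The local computation you sketch (that $\beta_n^{\mathrm{ur}}(p)=1$ and $\beta_n(p)=\sum_{k=0}^{n-1}q(k,n-k)p^{-k}$) is the content of Bhargava's local mass formula in \cite{Bhargava:2007}, and your derivation of the ratio is correct. So there is no gap in what you claim, but you should be clear that you have explained the heuristic origin of the conjecture and its status, not proved it---which is exactly the stance the paper takes.
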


By convention we set $q(0, n)=1$ for $n \ge 1.$
This distribution  $\rho_{n}(p)$ depends 
on both $n$ and $p$ and is a rational function of $p$.
For fixed $n$,
 $\rho_n(p) =  \frac{1}{p}+O(\frac{1}{p^2}) $  as $p \to \infty$.

Bhargava proves Conjecture \ref{conj24}  
 for $n \le 5$.  For  these cases, the 
probabilities are  $\pram_{1}(p)=0$ and
$ \pram_{2}(p)= \frac{1}{p+1}, \, \pram_3(p) = \frac{p+1}{p^2 + p +1},
\pram_{4}(p) = \frac{p^2+ 2p +1}{p^3 + p^2 + 2p +1},$
and
$\pram_{5}(p)= \frac{p^3+2p^2+2p+1}{p^4 + p^3 + 2p^2 + 2p +1},$ 
respectively.  
 In another conjecture, Bhargava \cite[Conjecture 5.2]{Bhargava:2007}
 further relates these probabilities  to the distribution of splitting types in $T_n$ having repeated factors.

 %%%%%%%%%%%% %%%%%%%%%%%%%%%%%%%%%%%%%%%%%%%%%%%%%%%%%%%%%%%
% UNANSWERED  QUESTION. What happens to  $\rho_n(p)$ as $n \to \infty$, holding $p$ fixed? Does it approach $1$?}
%%%%%%%%%%%%%%%%%%%%%%%%%%%%%%%%%%%%%%%%%%%%%%%%%%%%%%%%%%%%

Bhargava's second conjecture   
about prime splitting in $S_n$-number fields  is as follows
\cite[Conj.\ 1.3]{Bhargava:2007}.

%************
% Bhargava's conjecture 3.3
%************
\begin{conjecture}[Bhargava]
\label{conj51}
Fix a prime $p$, a positive integer $n$, and $\ssy \in \tnss$. Then 
\begin{equation}\label{151}
\lim_{B\rightarrow \infty}
\frac{\#\{K \in \mathcal{G}_{n}(B,p)\}\mid p\mbox{ has Artin symbol in} ~~
C_{\ssy}\}}{\#\{ K \in \mathcal{G}_{n}(B;p)\}} = \nssy,
\end{equation}
where $\nu_n(\cdot) $ denotes the Chebotarev density distribution on
conjugacy classes of  $S_n$, which is
$$
\nu_n(C_{\ssy}) := \frac{|C_{\ssy}|}{|S_n|} 
$$
\end{conjecture}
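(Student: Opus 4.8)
The plan is to establish Conjecture~\ref{conj51} by the local--global heuristic Bhargava himself proposed: one shows that the number of $S_n$-number fields of bounded field discriminant with a prescribed unramified behaviour at $p$ is asymptotically a global constant times a product of local densities, and then evaluates the local density at $p$ by a short combinatorial argument. This becomes a genuine proof precisely when the parametrizations of degree~$n$ rings needed to carry it out are available, i.e.\ for $n \le 5$ (the case $n = 3$ going back to Davenport--Heilbronn via the Delone--Faddeev correspondence, and $n = 4, 5$ to Bhargava's higher composition laws); for $n \ge 6$ this is still the natural approach, but a key ingredient is missing, which is why \eqref{151} remains conjectural there.

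First I would set up the count. One parametrizes isomorphism classes of degree~$n$ orders by $\mathrm{GL}$-orbits of integral points in a suitable prehomogeneous vector space $V$, and counts, by the geometry of numbers, those orbits with $|D_K| \le B$ that (a) correspond to maximal orders, (b) generate an $S_n$-field, and (c) satisfy the $p$-adic condition that $(p)$ be unramified with Artin symbol in $C_\ssy$. Passing from all orders to maximal orders requires a squarefree-type sieve that is uniform over primes, and the $S_n$ condition (irreducibility together with full Galois group) is imposed by removing a thin set. Since an $S_n$-number field has trivial automorphism group for $n \ge 3$, the resulting count is unweighted, and one expects an asymptotic of the form $\#\{\cdots\} \sim c_n\, B \prod_v \mu_v$, where $c_n$ is a global constant, $\mu_v$ is a local density at the place $v$, and $\mu_p$ is the density of the congruence condition encoding the splitting type $\ssy$ at $p$.

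Next I would compute the local factor at $p$. Because $(p)$ is unramified, the \'{e}tale $\QQ_p$-algebras that occur are exactly the products $\prod_i \QQ_{p^{f_i}}$ of unramified fields with $\sum_i f_i = n$, and these are in bijection with the partitions $\ssy \vdash n$, equivalently with the conjugacy classes $C_\ssy$ of $S_n$ via the Frobenius. If $\ssy$ has $c_i = c_i(\ssy)$ parts of size $i$, the automorphism group of the corresponding algebra is $\prod_i \bigl((\ZZ/i\ZZ)^{c_i} \rtimes S_{c_i}\bigr)$, so its Bhargava mass is $1/z_\ssy$ where $z_\ssy := \prod_i i^{c_i}\, c_i!$. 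By the conjugacy-class size formula \eqref{112a} this equals $|C_\ssy|/n!$, and summing over all unramified algebras gives $\sum_{\ssy \vdash n} 1/z_\ssy = \sum_{\ssy \vdash n} |C_\ssy|/n! = 1$ (the cycle index of $S_n$ specialized to all variables equal to $1$). Hence, conditioned on $(p)$ being unramified, the $p$-adic density of splitting type $\ssy$ equals $|C_\ssy|/n! = \nu_n(C_\ssy)$.

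Finally I would take the ratio in \eqref{151}: the global constant $c_n$, the factor $B$, the local densities $\mu_v$ at every $v \ne p$ (including the archimedean place, where no signature is prescribed and both signs of $D_K$ are allowed), and the overall normalization of the $p$-adic masses all occur identically in numerator and denominator and cancel, leaving exactly the conditional $p$-adic density $\nu_n(C_\ssy)$ computed above. The hard part is the first step: proving that the count of $S_n$-fields with local conditions is genuinely asymptotic to a product of local densities, with all error terms (the cusp and tail estimates in the geometry of numbers, and the uniformity of the maximality sieve) under control. For $n = 3, 4, 5$ this is the substance of Bhargava's density theorems, so \eqref{151} is a theorem in those cases; for $n \ge 6$ no parametrization of degree~$n$ rings is known and the multiplicativity of the local densities is unproven, which is the sole reason the conjecture is still open. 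The second and third steps, by contrast, are purely formal once the first is granted.
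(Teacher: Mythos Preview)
The paper does not prove this statement: it is presented as Bhargava's conjecture, with the remark that it is a theorem for $n\le 5$ by citing Davenport--Heilbronn for $n=3$ and Bhargava's density theorems for $n=4,5$. There is therefore no in-paper proof to compare against.

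Your outline is an accurate summary of the approach in those cited works, and you have correctly identified both the status of the conjecture and the reason it is open for $n\ge 6$. Your computation of the local factor at $p$ is right: the unramified degree~$n$ \'etale $\QQ_p$-algebra of type $\ssy$ has automorphism group of order $z_\ssy=\prod_i i^{c_i}c_i!$, so its Bhargava mass is $|C_\ssy|/n!$, and the total unramified mass is $1$. The cancellation argument in your third step is exactly how the conditional density comes out to $\nu_n(C_\ssy)$. In short, your proposal matches the proofs the paper points to, and there is nothing further in the paper itself.
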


Conjecture \ref{conj51} predicts that the limiting density exists
and agrees with that predicted by the  Chebotarev density 
theorem for conjugacy classes (see \cite{Lagarias-O:1977}, 
\cite[Chap. 7, \S3]{Narkiewicz:1990}, \cite[ Theorem 13.4]{Neukirch:1999}); 
this measure corresponds to the uniform distribution on $S_n$.
This limiting distribution depends on $n$ but is independent of $p$.
It is proved for $n \le 5$. The case $n=3$ is deducible from 
results of Davenport and Heilbronn \cite{Davenport:1971}, see
also Cohen et al \cite{CDDO:2000}. 
Bhargava proved the result for   $n=4$ and $n=5$ using his
earlier results  for discriminant density in quartic and quintic fields
\cite{Bhargava:2005, Bhargava:2010}.

For a general viewpoint on Bhargava's conjectures, see
Venkatesh and Ellenberg \cite[Section 2.3]{Venkatesh-E:2010}.
Bhargava's conjectures on  local mass formulas, were reinterpreted
in connection with Galois representations 
in Kedlaya \cite{Kedlaya:2007} and 
further cases were considered by Wood \cite{Wood:2008, Wood:2010}.

%#######################################################################
%Subection 3.2: Comparison with Bhargava's Conjecture.
%#######################################################################

\subsection{Random polynomial model versus  random number field model}\label{sec25}

We compare the distributions  for prime splitting in
$S_n$ number fields in the random polynomial model 
against those of the random number field model
given in   Bhargava's conjectures.
These splitting distributions differ.

We summarize the comparison in Table \ref{t:comparison}. A main feature is  that 
for each $n \ge 1$ the densities random monic  polynomial model in the $p \to \infty$ limit approaches
the uniform  density distribution conjectured in Bharagava's model. 

%%%%%%%%%%%
% Table 3.1
%%%%%%%%%%
\begin{table}[!hbtp]
\begin{tabular}{ |p{1.3 in}|p{1.7 in}|p{1.6 in}|}
\hline
Probability model& Random $S_n$-Polynomial  Model   & Random $S_n$-Number Field Model  (Bhargava) \\ \hline
\hline
Sample space & Degree $n$, monic polynomials with integer coefficients $|c_i| \le B$,
generating an $S_n$-number field& $S_n$-number fields  $K$ with  field discriminant $|D_K|$ bounded by $D$ \\ \hline
Limit procedure &  Box size $B \to \infty$ & Discriminant $D \to \infty$\\ \hline
 Ramification probability at $(p)$ & $\text{Prob}[ \,p \,\,\text{divides} \, Disc(f)]$ equals $ \frac{1}{p}$, which is independent of $n$&
 $\text{Prob}[ \,p \, \text{divides} \,  Disc(K)]$ is a quantity $\theta_n(p)$  which depends on both $n$ and $p$ (Conjecture \ref{conj24}) \\ \hline
 Limiting distribution on $S_n$ of
 splitting types & $p$-splitting distribution  $\nu_{n, p}^{\ast} (C_{\ssy})$ on conjugacy classes, whose probabilities  depend on
both $n$ and $p$
 & Chebotarev  distribution 
 $\nu_n(C_{\ssy})= \frac{|C_{\ssy}|}{n!}\,\,$,  which is independent of $p$ (Conjecture \ref{conj51} ) \\ \hline
   Limit $p \to \infty$ of
 ramification probability& 0 & 0 \\ \hline
 %%%%%%%%%%%%%%%%%%%%%%%%%%%%%%%
%  Ramification probability limit $p \to 1$ & 1& 1 \\ \hline
%%%%%%%%%%%%%%%%%%%%%%%%%%%%%%%
Limit $p \to \infty$ of distribution densities & Uniform distribution $\nu_{n, \infty}^{\ast}= \nu_n$ on elements of $S_n$  & 
Uniform distribution $\nu_{n, \infty}^{\ast}= \nu_n$ on elements of $S_n$\  \\ \hline
\end{tabular}
\bigskip
\caption{Comparison of  polynomial splitting  model  and random $S_n$-number field model probabilities. (Conjectures
\ref{conj24} and \ref{conj51} are theorems for  $n \le 5$.)\label{t:comparison}}
\end{table}

Both model predictions assign
a weighted contribution of  $S_n$-number fields $K$ having  discriminant
prime to $p$, which  depend on the parameter
$B$ (resp. $D$), and consider the limiting distribution as the corresponding parameter grows.
In each model the splitting density is a conditional probability based on 
conditioning against an ``unramifiedness" condition. 
There is  a  difference of scale in the cutoffs  in the $B$ and $D$ parameters
between the two models, in that polynomial discriminants
$D_f$ grow proportionally to  $B^n$.
 However the  limit as the parameters go to infinity, this scale 
differences play no role.

The main differences in the  predicted probabilities in  the models are
the following.

\begin{enumerate}
\item[(1)]
In Bhargava's conjectures the probability of ramification $\rho_n(p)$ depends on both
the prime $p$ and the degree $n$. One has
$$
\theta_n(p) := 1- \rho_{n}(p) = \frac{1}{1 + \sum_{k=1}^{n-1} q(k, n-k) p^{-k}},
$$
This formula implies that for fixed $p$ and variable $n$ the function  $\rho_n(p)$ increases  to the limit
$\rho_{\infty}(p) := 1- \frac{1}{P(1/p)}$
where 
$$
P(x) := \sum_{n=0}^{\infty} p_n x^n= \prod_{n=1}^{\infty} ( \frac{1}{1-x^n}).
$$
In contrast, in the random polynomial model  the probability of
ramification $ \frac{1}{p}$  is independent of $n$, according to Theorem \ref{th12} (1).
The formula above implies that for fixed $n$ one has $\rho_{n}(p) = \frac{1}{p} +O( \frac{1}{p^2}) $ as $p \to \infty$,
so both ramification probabilities go to $0$ as $p \to \infty$ at the same rate.

\item[(2)]
In Bhargava's conjectures the splitting probabilities are independent
of both $p$ and $n$. In contrast, in the random polynomial model  the probabilities  $\nu_{n, p}^{*}(C_{\ssy})$
depend on both $n$ and $p$.
\end{enumerate}

What features of the models account for the differing answers in the two models?
The models themselves have  structural differences.

\begin{enumerate}
\item[(D1)]
 The (irreducible) polynomial $f$ is associated algebraically
 not with the ring of integers $O_F$ of the
 field $K= \QQ(\theta)$ generated by a root $\theta$ of $f$, but with the particular monogenic order
 $O_f = \ZZ[1, \theta, \theta^2, \cdots, \theta^{n-1}]$. In particular 
 discriminant  $\Disc(f) = D_K c^2$, where $c= [O_K: O_f]$ is the index of $O_f$ inside
 $f$.  In particular $\Disc(f)$
 may be  divisible by primes which do not divide 
$D_K$, so the ``unramified" conditions of the two probability models differ.  
For some $S_n$-number fields $K$ the ring
of integers $O_K$ is not monogenic. The number of monogenic orders of a given
index in the maximal order $O_K$ (isomorphism up to an additive shift of a variable) is known  to 
depend on the index within a given field $K$, cf.  Evertse \cite{Evertse:2011}.
\item[(D2)]
Many different polynomials in $\mathcal{F}_{n}(B;p)$ generate the same
$S_n$-number field $K= K_f$. Thus each field  $K$ that occurs is weighted by the
number of polynomials in the box that generate it (and which satisfy
the discriminant co-primeness condition). The weights depend in
a complicated way on $K$ and $B$ and change as $B \to \infty$.
\end{enumerate}

The  difference  (D1)  of the ramification conditions in the two models
presumably accounts for much of the mismatch.
 The $S_n$-number fields detected by the random 
polynomial model are always
unramified in the field sense, but the random monic polynomial
models do not detect some $S_n$-number fields not
ramified at $(p)$.
We should really replace the $p$-part of $\Disc (f)$ with the $p$-part of
$D_K$, with $K = \QQ(\theta)$, which involves studying the $p$-adic coefficients of $f(x)$.
The model of Bhargava is based on a mass formula counting $p$-adic \'{e}tale extensions
with weights, and the weights matter.
However from  the viewpoint of the difference (D2) it is not immediately clear 
that such weighted sums  will  conspire to produce the
nice limiting values given  in Theorem \ref{th12}. To understand  difference (D2) better
 it might be interesting to study an auxiliary question: 
for each pair of $S_n$-number fields $K_1, K_2$ what is the behavior as $B \to \infty$ of the ratio
of the number of $f(x)$ in the box of size $B$ that generate the field $K_1$
(resp. $K_2$) and satisfy $p \nmid \Disc (f(x))$. Does this quantity have a
limiting value and if so, how does it depend on $K_1$ and $K_2$?

We conclude that there  are observable structural differences between the two models. 
We do not  currently have  a conceptual explanation how these 
structural differences account for and quantitatively explain
the  differences in the limiting densities of the two models. 
% %%%%%%%%%%%%%%%%%%%%%%%%%%%%%%%%
%From  the perspective
%of each model separately it is not even clear why the  other model 
%should have limiting  densities. 
% A conceptual explanation might be  at the level  of algebraic geometry 
%%%%%%%%%%%%%%%%%%%%%%%%%%%%%%%%%%

%#######################################################################
% SECTION 4
%#######################################################################

\section{Splitting Measures}\label{sec3}

In this section 
we define and study 
the one-parameter  family 
 of  splitting measures $\nu_{n,z}^{\star}$ ($z \in \CC$)
on the symmetric group $S_n$, for each $n$. 
 We relate this measure at $z=q= p^k$ to finite field factorization of degree $n$ monic
polynomials over $\FF_q$.

%#######################################################################
%Subection 4.1: 
%#######################################################################

\subsection{ Necklace polynomials}\label{sec31}

The number of  monic irreducible polynomials
of degree $m$ over finite fields $\FF_q$ for $q=p^f$ are well known to 
be interpolatable by  universal
polynomial $M_m(X)$ evaluated at value $X= q$.
Recall that for   $m \ge 1$ the {\it necklace polynomial} of degree $m$ is
$\nmx \in \QQ[X]$  by 
\beql{200}
\nmx := \frac{1}{m} \left(\sum_{d | m} \mu(\frac{m}{d}) X^d \right) 
= \frac{1}{m} \left(\sum_{d | m} \mu(d) X^{\frac{m}{d}} \right),
\eeq
where $\mu(d)$ is the M\"{o}bius function. 
For $m=0$ we set $M_0(X)=1$.
We note  that $M_1(X)  = X$  and $M_2(X) =\frac{1}{2}X(X-1)$.
Clearly  $\nmx \in \frac{1}{m} \ZZ[X],$ for $m \ge 1$. 
%%%%%%%%%%%%%%%%%%%%%%
% Additionally
%~~M_4(X)(X)= \frac{1}{4}(X^4-X^2),~~
%M_6(X)= \frac{1}{6}(X^6-X^3-X^2+X).$ 
%%%%%%%%%%%%%%%%%%%%%%%%%%%%
The name ``necklace polynomial"  was proposed by   Metropolis and Rota \cite{Metropolis:1983},
because the  value  $M_m(k)$ for positive integer $k$ has a combinatorial interpretation
as counting  the number
of necklaces of $m$ distinct colored beads formed using $k$ colors which 
have the property of being  {\em primitive} in
the sense that  their cyclic rotations are distinct (Moreau \cite{Moreau:1872}).
In 1937 Witt \cite[Satz 3]{Witt:1937} showed that $M_m(k)$  counts the number of basic commutators of degree $m$ 
 in the free Lie algebra on  $k$ generators. 
See the discussion  in Hazewinkel \cite[Sect. 17]{Hazewinkel:2009}.

For later use we  give some basic properties of $\nmx$.
 
%**************************************************************************
%
%  Lemma 4.1
%
%*************************************************************************
\begin{lemma} \label{le31}
(1) Let $q= p^k$ be a prime power and let $N_{m}^{irred}(\FF_q)$ count the
number of irreducible
monic polynomials in $\FF_q[X]$ of degree  $m$.
Then 
$$
M_m(q) = N_m^{irred}(\FF_q).
$$

%%%%%%%%%%%%%%%%%%%%%%%%%%%%%%%
% Define  $\rad(m)= \prod_{p |m} p$. One has
%$$M(X, m) = \frac{\rad(m)}{m} M(X^{m/\rad(m)}, \rad(m))$$
%%%%%%%%%%%%%%%%%%%%%%%%%%%%%%%%%

 (2) The polynomial 
$\nmx \in \QQ[X]$ is an integer-valued polynomial, i.e.
one has $M_m(k)  \in \ZZ$ for all $k \in \ZZ$.
\end{lemma}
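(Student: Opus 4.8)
For part (1), the plan is to use the standard argument via unique factorization of $X^{q^m} - X$ over $\FF_q$. Recall that $X^{q^m}-X$ is the product of all monic irreducible polynomials over $\FF_q$ whose degree divides $m$, each occurring exactly once. Comparing degrees gives the identity $q^m = \sum_{d \mid m} d \cdot N_d^{irred}(\FF_q)$. By M\"obius inversion applied to the arithmetic function $m \mapsto q^m$, one obtains
$$
m \cdot N_m^{irred}(\FF_q) = \sum_{d \mid m} \mu\!\left(\tfrac{m}{d}\right) q^d,
$$
and dividing by $m$ and comparing with the definition \eqref{200} of $M_m(X)$ evaluated at $X = q$ yields $N_m^{irred}(\FF_q) = M_m(q)$. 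This step is routine and presents no real obstacle; the only thing to be careful about is recording the well-known factorization of $X^{q^m}-X$ correctly.

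For part (2), the plan is to prove the stronger claim that $M_m(k) \in \ZZ$ for every integer $k$ (not merely $k$ a prime power), by a purely elementary divisibility argument. Write $m M_m(k) = \sum_{d \mid m} \mu(m/d) k^d$; it suffices to show $m \mid \sum_{d \mid m} \mu(m/d) k^d$ for all $k \in \ZZ$. One standard approach: observe that this congruence is multiplicative in $m$ in an appropriate sense, so it reduces to the case $m = p^j$ a prime power, where the sum collapses to $k^{p^j} - k^{p^{j-1}}$, and then one invokes the fact that $k^{p^j} \equiv k^{p^{j-1}} \pmod{p^j}$ (a consequence of lifting-the-exponent, or of induction using Fermat/Euler). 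Alternatively, and perhaps more cleanly, one can note that for $k \ge 0$ the quantity $M_m(k)$ literally counts a set of objects (primitive necklaces / aperiodic words of length $m$ on $k$ symbols, or basic commutators as in Witt), hence is a nonnegative integer for all $k \ge 0$; since $M_m(X) \in \QQ[X]$ is a polynomial taking integer values at all sufficiently many consecutive integers (indeed at all $k \ge 0$), it is an integer-valued polynomial, and in particular $M_m(k) \in \ZZ$ for negative $k$ as well.

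The only mild subtlety — the step I would flag as needing the most care — is the combinatorial-to-polynomial passage in the alternative argument for (2): one must be sure that $M_m(k)$ genuinely counts something for every integer $k \ge 0$ (this is exactly the necklace interpretation attributed to Moreau and Metropolis--Rota in the text, or Witt's basic-commutator count, which can be cited directly), and then recall that a rational polynomial agreeing with an integer sequence at all nonnegative integers lies in $\ZZ\binom{X}{0} + \ZZ\binom{X}{1} + \cdots$ and hence is integer-valued on all of $\ZZ$. If one prefers to avoid the combinatorial input entirely, the direct congruence route via reduction to prime powers and $k^{p^j}\equiv k^{p^{j-1}}\pmod{p^j}$ is fully self-contained and is the fallback I would write out.
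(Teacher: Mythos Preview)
Your proposal is correct. For (1) the paper simply cites references (Gauss; Rosen) for the standard argument you spell out, so your proof is precisely the one being deferred to. For (2) your second approach---the necklace count at nonnegative integers followed by the polynomial-extension step---is exactly the paper's proof; the paper phrases the extension as ``for a polynomial in $\frac{1}{m}\ZZ[X]$ it suffices to check integrality at $m$ consecutive integer values,'' a slight variant of your binomial-basis remark. Your first approach for (2), reducing to prime powers and invoking $k^{p^j}\equiv k^{p^{j-1}}\pmod{p^j}$, is a genuine self-contained alternative not used in the paper: it removes the dependence on the combinatorial interpretation at the cost of a short (standard) congruence computation, which is a reasonable trade if one wants the argument to stand alone.
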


\begin{proof}
(1)  The well known formula
$N_m^{irred}(\FF_q)= M_m(q)$ 
was  found by  Gauss\footnote{Gauss found this formula  on August 25, 1797,
according to his {\em Tagebuch}, see  Frei \cite{Frei:2007}.}
 in the unpublished Section 8 of {\em Disquisitiones Arithmeticae,}
Articles 342 to 347, see  Gauss \cite[pp. 212--240]{Gauss:1876}, cf. Maser \cite{Maser:1889}.
A proof is given in  Rosen \cite[p. 13]{Rosen:2002}.

 (2)  To  verify that a polynomial in $\frac{1}{m}\ZZ[X]$ is integer-valued,
it suffices to  check the integrality property holds at $m$ consecutive integer values of $X$.
The integrality property at positive integers follows  from the counting interpretation of
 the values $M_m(j)$ of Moreau \cite{Moreau:1872}, 
 see also   \cite{Metropolis:1983}.
 \end{proof}

We next obtain  bounds on the size of $\nmx$ which will be used
in Sections \ref{sec34a} and \ref{sec34b} to establish 
non-negativity properties of the $z$-splitting distributions for certain
parameter ranges.

%**************************************************************************
%
%  Lemma 4.2
%
%*************************************************************************
\begin{lemma} \label{le32}
(1) The necklace polynomial 
$\nmx$ has  $M_m(0)=0$  for $m \ge 1$. In addition
$$
M_m(1) = \left\{ 
\begin{array}{cl}
1 & ~~\mbox{for}~~m=1,\\
~&~\\
0 & ~~\mbox{for}~~m\ge 2.
\end{array}
\right.
$$
One has  $(X-1)^2 \nmid M_m(X)$ for all $m \ge 2$.

(2) One has
 $$
 M_m(t) >0, ~~~\mbox{for all real}~~ t \ge 2.
 $$
In addition, for  $1\le j \le m $  there holds for real $t>m-1$, 
\begin{equation}\label{223b}
M_j(t) > \left\lfloor \frac{m}{j}\right\rfloor -1.
\end{equation}

(3) For $m \ge 1$  one has
 \begin{equation}\label{minus-ineq}
(-1)^m M_m(-t) >0, ~~~\mbox{for all real}~~ t \ge 2.
 \end{equation}
 In addition,  for each $m \ge 2$ and  $t>0 $ with $t(t+1) > m-2$, there holds
for $1 \le j \le m/2$, 
\begin{equation}\label{223c}
M_{2j}(t) > \left\lfloor \frac{m}{2j}\right\rfloor -1.
\end{equation}
\end{lemma}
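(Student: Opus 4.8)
The plan is to prove the three parts of Lemma~\ref{le32} separately, starting from the explicit M\"obius formula \eqref{200} and using crude but uniform bounds on the tail terms.

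For part (1), I would substitute $X=0$ into \eqref{200}: every monomial $X^{m/d}$ has positive exponent for $m\ge 1$, so $M_m(0)=0$. For $M_m(1)$, note $M_m(1)=\frac1m\sum_{d\mid m}\mu(m/d)$, and $\sum_{d\mid m}\mu(m/d)$ is the classical divisor sum which equals $1$ if $m=1$ and $0$ if $m\ge 2$. For the claim $(X-1)^2\nmid M_m(X)$ when $m\ge 2$, since $M_m(1)=0$ we already know $(X-1)\mid M_m(X)$; I would show $M_m'(1)\neq 0$. Differentiating \eqref{200} gives $M_m'(X)=\frac1m\sum_{d\mid m}\mu(d)\frac{m}{d}X^{m/d-1}=\sum_{d\mid m}\frac{\mu(d)}{d}X^{m/d-1}$, so $M_m'(1)=\sum_{d\mid m}\frac{\mu(d)}{d}=\prod_{p\mid m}(1-\tfrac1p)=\frac{\varphi(m)}{m}>0$. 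Hence $(X-1)^2\nmid M_m(X)$.

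For part (2), the key observation is that $M_m(t)=\frac1m\big(t^m+\sum_{d\mid m,\,d<m}\mu(m/d)t^d\big)$, so the dominant term is $\frac{t^m}{m}$ and the error is bounded by $\frac1m\sum_{d=0}^{m-1}t^d = \frac{t^m-1}{m(t-1)}$ for $t>1$ (extending the sum over all $d$ from $0$ to $m-1$ for an upper bound). Thus for $t>1$,
\[
M_m(t)\ \ge\ \frac{1}{m}\left(t^m-\frac{t^m-1}{t-1}\right)\ =\ \frac{t^{m-1}(t-2)+\cdots}{m(t-1)},
\]
and a short calculation shows this is positive for $t\ge 2$ (with equality issues only at $t=2$, $m$ small, which one checks directly using $M_1(2)=2$, $M_2(2)=1$, etc.; in fact $M_m(2)$ counts irreducible polynomials over $\mathbb F_2$ by part~(1) of Lemma~\ref{le31}, which is $\ge 1$ for all $m\ge 1$). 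For the sharper bound \eqref{223b}: when $t>m-1\ge j$ (the interesting range has $j\le m$), the leading term $\frac{t^j}{j}$ already exceeds $\lfloor m/j\rfloor-1$ comfortably since $t^j/j > (m-1)^j/j \ge (m-1)/j \ge m/j - 1 \ge \lfloor m/j\rfloor -1$ for $j\ge 1$; the tail $-\frac1j\sum_{d<j}t^d$ must then be controlled, and one shows it does not destroy the inequality because $t>m-1$ makes $t^j$ dominate the geometric tail by a factor growing with $t$. I would write out the comparison $t^j - \frac{t^j-1}{t-1} > j(\lfloor m/j\rfloor -1)$, which holds once $t\ge 2$ and $t>m-1$.

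For part (3), substituting $-t$ for $X$ in \eqref{200} gives $M_m(-t)=\frac1m\sum_{d\mid m}\mu(d)(-t)^{m/d}$, so $(-1)^m M_m(-t)=\frac1m\sum_{d\mid m}\mu(d)(-1)^{m-m/d}t^{m/d}$. The leading term ($d=1$) is $\frac{t^m}{m}$ with sign $(-1)^{m-m}=+1$, and the same geometric-tail bound $\frac1m\cdot\frac{t^m-1}{t-1}$ controls the rest, giving $(-1)^mM_m(-t)>0$ for $t\ge 2$ by the same estimate as in part (2); one again verifies small cases directly. Finally, for \eqref{223c}, note $M_{2j}(t)$ is exactly one of the quantities already bounded in \eqref{223b} but now I need the hypothesis phrased via $t(t+1)>m-2$ rather than $t>m-1$. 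The point is that $t(t+1)>m-2$ implies $t > \frac{-1+\sqrt{4m-7}}{2}$, which for the even index $2j\le m$ gives $t^{2j}\ge t^2 > m-2-t \ge \lfloor m/(2j)\rfloor - 2$ (roughly), and then the same tail comparison as before finishes it; I would carry out the elementary inequality $t^{2j} - \frac{t^{2j}-1}{t-1} > 2j(\lfloor m/(2j)\rfloor - 1)$ using $t(t+1)>m-2$ and $t\ge 2$.

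The main obstacle I anticipate is \textbf{not} the leading-order behavior---that is easy---but rather making the crude geometric-tail bound $\big|\sum_{d\mid m,\,d<m}\mu(m/d)t^d\big|\le \frac{t^m-1}{t-1}$ tight enough to cover the boundary cases $t=2$ and small $m$ simultaneously with a clean uniform argument; for those I expect to fall back on the interpretation $M_m(2)=N_m^{\mathrm{irred}}(\mathbb F_2)\ge 1$ from Lemma~\ref{le31}(1) and a handful of explicit checks rather than the analytic estimate. Reconciling the hypothesis $t(t+1)>m-2$ in \eqref{223c} with the hypothesis $t>m-1$ in \eqref{223b} is a secondary bookkeeping nuisance, since the former is the weaker hypothesis only because the index is restricted to even values $2j$, so $2j$ ranges only up to $m/2$ and the quadratic slack in $t$ suffices.
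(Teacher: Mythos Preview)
For part (1) your argument is identical to the paper's. For the positivity claims in parts (2) and (3), your strategy matches the paper's, with one structural difference worth flagging: you bound the tail by $\sum_{d=0}^{m-1}t^d$, whereas the paper exploits the elementary fact that every \emph{proper} divisor of $m$ is at most $\lfloor m/2\rfloor$, yielding the sharper
\[
m\,M_m(t)\ \ge\ t^m-\sum_{d=1}^{\lfloor m/2\rfloor}t^d\ \ge\ t^m-t^{m/2+1}+t.
\]
This extra slack is precisely what lets the paper dispatch \eqref{223b} for $j\ge 3$ in one line (via $jM_j(t)\ge 1+t>m$), treating only $j=1,2$ separately. Your cruder bound still works in principle, but your displayed comparison $t^j-\frac{t^j-1}{t-1}>j(\lfloor m/j\rfloor-1)$ already fails at $j=1$ (it yields $t-1>m-1$) and at $j=m=2$ with $t$ near $1$, so the fallback case analysis you anticipate is unavoidable rather than optional.

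Your treatment of \eqref{223c} has a genuine gap. The printed inequality contains a typo: part (3) concerns negative arguments throughout, and the paper's proof actually establishes $M_{2j}(-t)>\lfloor m/(2j)\rfloor-1$, not $M_{2j}(t)$. The hypothesis $t(t+1)>m-2$ is tailored to this, since $2M_2(-t)=t(t+1)$. The literal statement with $M_{2j}(t)$ is \emph{false}: for $m=4$, $j=1$, $t=1.1$ one has $t(t+1)=2.31>2=m-2$ but $M_2(1.1)=0.055\not>1=\lfloor m/2\rfloor-1$. Hence your plan to reduce \eqref{223c} to \eqref{223b} cannot succeed, and the sketched chain $t^2>m-2-t\ge\lfloor m/(2j)\rfloor-2$ does not hold in general. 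The correct route handles $2j=2$ directly from $2M_2(-t)=t(t+1)>m-2$, and for $2j\ge 4$ observes that $t(t+1)>m-2$ forces $t$ large enough that the estimate from the first half of (3) applies.
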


\begin{proof}
(1) We have $M_m(0)=0$ since it has no constant term for $m \ge 1$.
For $m \ge 1$ we have $M_m(1)  = \sum_{d|m} \mu(d),$ which
yields  $M_m(1) =0$ for $m \ge 2$. Thus $X(X-1) | M_{m}(X)$ for $m \ge 2$.
 The relation  $(X-1)^2$ does not divide  $\nmx$  follows from
$$
M^{'}_m(X)|_{X=1} = \frac{1}{m} \Big(\sum_{d |m} \mu(d) \frac{m}{d} \Big)=  \prod_{p|m}\Big(1- \frac{1}{p}\Big)> 0.
$$

(2) For $m \ge 2$ and real $t\ge 2$,  one has
\begin{equation}\label{eq33}
m \,M_{m}(t)  \ge t^m - \left(\sum_{j=1}^{\left\lfloor m/2\right\rfloor} t^j \right) = t^m - 
\left(\frac{ t^{\frac{m}{2} + 1} - t}{t-1} \right)\ge t^m - t^{\frac{m}{2}+1} + t> 0.
\end{equation}
For the second part, suppose $m \ge 2$ and $1 \le j \le m$. We have for $j=1$ and $t > m-1$
that
$M_1(t) = t > m-1.$
For $j=2$ and $t > m-1$ we  have
$$
M_2(t) =  \frac{1}{2}t(t-1) \ge  \left\lfloor \frac{m}{2}\right\rfloor -1,
$$
the last inequality being  immediate for $m=2$ and easy for $m \ge 3.$
Finally, for $3 \le j \le m$, and $t > m-1$, we have $t^j - t^{\frac{j}{2}+1} \ge 1,$ whence 
by \eqref{eq33}, 
$$
jM_j(t)\ge 1+t > m
$$
which gives (\ref{223b}) in this case.

(3)  To establish $M_m(-t) >0$ for $t>2$, note that for  $m=1$ one has $-M_1(-t) = t>0$ for $t>0$.
For $m \ge 2$ we have for $t>2$ that
\begin{equation}\label{eq33b}
m \,M_m(-t) \ge t^m - (\sum_{j=1}^{\left\lfloor m/2\right\rfloor} t^j ) 
\ge t^m - t^{\frac{m}{2}} + t
 > 0.
\end{equation}
For the second part, it suffices to show for $1 \le j \le m/2$ that
\[
(2j) M_{2j}(-t) > m -2j  \quad \Big(  \ge 2j( \left\lfloor \frac{m}{2j} \right\rfloor -1\Big).
\]
For $2j=2$ we have by hypothesis
\[
2M_{2}(-t) = t(t+1) > m-2.
\]
For $2j\ge 4$ and  $m \ge 6$, the condition  $t(t+1) > m- 2$ implies $t>2$.
Then  \eqref{eq33b} applies and we obtain.
\[
2j M_{2j}(-t) \ge t^{2j} - t^j + t \ge t(t+1) >  m -2 > m- 2j.
\]
as required. The remaining case is $m=4$ and $2j=4$,
where $m-2j=0$, the condition
$t (t+1)>2$ implies $t>1$, whence
\[
4M_{4} (-t) = t^4 - t^2 > 0,
\]
as required.

\end{proof}

%#######################################################################
%Subection 4.2: 
%#######################################################################
\subsection{Cycle polynomials}\label{sec32a}

To any partition $\ssy \vdash n$ we associate the {\em cycle polynomial}
\beql{231}
N_{\ssy}( X) := \prod_{i=1}^n \Big( {{M_i(X)}\atop{c_i(\ssy)}} \Big)
\eeq
Here $N_{\ssy}(X) \in \QQ[X]$ is a polynomial of degree $n$ 
(since $\sum_{i=1}^n i c_i= n$).

 The values  $N_{\ssy}(X)$ 
for prime powers $X=q=p^f$  count the number of square-free
polynomial factorizations of type $\ssy$
 in $\FF_q[X]$, as shown in 
 Section \ref{sec35}.

%**************************************************************************
%
%  Lemma 4.3
%
%*************************************************************************
\begin{lemma} \label{le34} {\em (Properties of Cycle Polynomials)}
Let $n \ge 2$. For any  partition $\ssy \vdash n$  
the cycle polynomial $N_{\ssy}(X)$ has the following properties.

(1) The polynomial 
$N_{\ssy}(X) \in \frac{1}{n!}\ZZ[X]$  is integer-valued.

(2) The polynomial $N_{\ssy}(X)$ has lead
term
$$
\left(\prod_{i=1}^n \frac{1}{ i^{ c_i(\ssy)} c_i(\ssy)!} \right) X^n = \frac{ |C_{\ssy}|}{n!} X^n.
$$

(3) The polynomial $N_{\ssy}(X)$ is divisible  by $X^{m}$,
where $m \ge 1$
counts  the number
of distinct cycle lengths appearing in $\ssy$.

(4) There holds
\begin{equation}\label{eq351}
\sum_{\ssy \vdash n} N_{\ssy}(X) = X^{n-1}(X-1).
\end{equation}
\end{lemma}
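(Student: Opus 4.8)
The plan is to dispatch parts (1)--(3) by direct manipulation of the product defining $N_{\ssy}(X)$, using the properties of necklace polynomials recorded in Lemmas \ref{le31} and \ref{le32}, and to prove the summation identity (4) by a formal power series computation built around the cyclotomic identity. For (1): by Lemma \ref{le31}(2) each $M_i(X)$ is integer-valued and lies in $\frac1i\ZZ[X]$, so each linear factor $M_i(X)-j$ of the numerator of $\binom{M_i(X)}{c_i(\ssy)}$ lies in $\frac1i\ZZ[X]$; the product of the $c_i(\ssy)$ such factors lies in $\frac{1}{i^{c_i(\ssy)}}\ZZ[X]$, and dividing by $c_i(\ssy)!$ gives $\binom{M_i(X)}{c_i(\ssy)}\in\frac{1}{i^{c_i(\ssy)}c_i(\ssy)!}\ZZ[X]$. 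Multiplying over $i$ and using \eqref{112a} — which exhibits $\prod_i i^{c_i(\ssy)}c_i(\ssy)!=n!/|C_{\ssy}|$ as a divisor of $n!$ — yields $N_{\ssy}(X)\in\frac1{n!}\ZZ[X]$; integer-valuedness is then automatic because at an integer $X=k$ each $M_i(k)$ is an integer, hence so is each $\binom{M_i(k)}{c_i(\ssy)}$ and their product. For (2): from $M_i(X)=\frac1iX^i+(\text{lower order})$ one gets $\binom{M_i(X)}{c_i(\ssy)}=\frac{1}{c_i(\ssy)!}M_i(X)^{c_i(\ssy)}+(\text{lower order})$ with leading term $\frac{1}{i^{c_i(\ssy)}c_i(\ssy)!}X^{ic_i(\ssy)}$; the product over $i$ then has leading term $\bigl(\prod_i\frac1{i^{c_i(\ssy)}c_i(\ssy)!}\bigr)X^n$ since $\sum_i ic_i(\ssy)=n$, and this equals $\frac{|C_{\ssy}|}{n!}X^n$ by \eqref{112a}. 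For (3): Lemma \ref{le32}(1) gives $M_i(0)=0$ for $i\ge1$, so $X\mid M_i(X)$ in $\QQ[X]$; for each of the $m$ indices $i$ with $c_i(\ssy)\ge1$ the factor $M_i(X)$ (the $j=0$ term) appears in the numerator of $\binom{M_i(X)}{c_i(\ssy)}$, so $X$ divides that binomial, and multiplying these $m$ contributions gives $X^m\mid N_{\ssy}(X)$.

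For (4) I would work in $\QQ[X][[t]]$, where $(1+t^i)^{M_i(X)}$ is defined by the binomial series $\sum_{c\ge0}\binom{M_i(X)}{c}t^{ic}$. Expanding $\prod_{i\ge1}(1+t^i)^{M_i(X)}$ and collecting the coefficient of $t^n$, the contributing terms are precisely the tuples $(c_i)_{i\ge1}$ of nonnegative integers with $\sum_i ic_i=n$, that is, the partitions $\ssy\vdash n$ with $c_i=c_i(\ssy)$; hence
\[
\sum_{\ssy\vdash n}N_{\ssy}(X)=[t^n]\prod_{i\ge1}(1+t^i)^{M_i(X)},
\]
an identity of polynomials in $X$ because it holds coefficient by coefficient. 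The engine is the cyclotomic identity
\[
\prod_{i\ge1}\Bigl(\frac1{1-t^i}\Bigr)^{M_i(X)}=\frac1{1-Xt},
\]
which I would verify by comparing logarithms: the coefficient of $t^N$ on the left is $\frac1N\sum_{i\mid N}iM_i(X)$ and on the right $\frac1N X^N$, so the identity reduces to $\sum_{i\mid N}iM_i(X)=X^N$, which follows at once from $iM_i(X)=\sum_{d\mid i}\mu(i/d)X^d$ and $\sum_{e\mid m}\mu(e)=[m=1]$. Applying this identity with $t$ and with $t^2$ in place of $t$ and using $1+t^i=(1-t^{2i})/(1-t^i)$,
\[
\prod_{i\ge1}(1+t^i)^{M_i(X)}=\frac{\prod_{i\ge1}(1-t^{2i})^{M_i(X)}}{\prod_{i\ge1}(1-t^i)^{M_i(X)}}=\frac{1-Xt^2}{1-Xt}.
\]
Since $\dfrac{1-Xt^2}{1-Xt}=(1-Xt^2)\sum_{m\ge0}X^mt^m$ has $t^n$-coefficient $X^n-X^{n-1}$ for $n\ge2$, this gives $\sum_{\ssy\vdash n}N_{\ssy}(X)=X^{n-1}(X-1)$, proving (4).

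I do not anticipate a genuine obstacle; the points needing care are the formal power series bookkeeping in (4) — the exponents $M_i(X)$ are polynomials rather than integers, but $(1+t^i)^w$, and $\log$ and $\exp$ of series with constant term $1$ or $0$, are all well defined over $\QQ[X]$ — and the divisibility $\prod_i i^{c_i(\ssy)}c_i(\ssy)!\mid n!$ used in (1). As an alternative route to (4), one could instead observe that $\sum_{\ssy\vdash n}N_{\ssy}(q)$ counts the monic squarefree polynomials of degree $n$ over $\FF_q$ (by the combinatorial interpretation of $N_{\ssy}(q)$ established in Section \ref{sec35}), which classically equals $q^n-q^{n-1}$ for $n\ge2$, and then conclude because two degree-$n$ polynomials agreeing at all prime powers must coincide; but the formal computation above is self-contained.
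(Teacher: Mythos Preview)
Your treatment of (1)--(3) matches the paper's proof essentially line for line. For (4) you take a genuinely different route. The paper argues as in your final ``alternative'' remark: both sides of \eqref{eq351} are polynomials of degree $n$, so it suffices to check equality at infinitely many points; it then invokes the (forward-referenced) combinatorial interpretation of $N_{\ssy}(q)$ from Proposition~\ref{pr33} to see that $\sum_{\ssy\vdash n}N_{\ssy}(q)$ counts monic squarefree degree-$n$ polynomials over $\FF_q$, which is $q^n-q^{n-1}$. Your argument instead packages everything into a formal power series identity in $\QQ[X][[t]]$: you identify $\sum_{\ssy\vdash n}N_{\ssy}(X)$ as the $t^n$-coefficient of $\prod_{i\ge1}(1+t^i)^{M_i(X)}$, reduce this product to $(1-Xt^2)/(1-Xt)$ via the cyclotomic identity $\prod_{i\ge1}(1-t^i)^{-M_i(X)}=(1-Xt)^{-1}$ (which you verify by M\"{o}bius inversion), and read off the coefficient. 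What your approach buys is self-containment --- no forward reference to Section~\ref{sec35} and no appeal to the finite-field count of squarefree polynomials --- and it works directly over $\QQ[X]$ rather than at specialized values. What the paper's approach buys is brevity once Proposition~\ref{pr33} is in hand, and it makes transparent the conceptual source of the identity as the squarefree decomposition of $\FF_q[X]$. Both are valid; your generating-function computation is correct as stated, with the only delicate points (convergence of the infinite product in $\QQ[X][[t]]$ since $(1+t^i)^{M_i(X)}-1$ has $t$-order $\ge i$, and the legitimacy of binomial series with polynomial exponent over a $\QQ$-algebra) handled by the remarks in your closing paragraph.
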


\begin{proof}
(1) The definition \eqref{231} implies that $N_{\ssy}(X) \in \frac{1}{d(\ssy)}\ZZ[X]$
with 
$$
d(\ssy) = \prod_{i=1}^n  i^{c_i(\ssy)} c_i(\ssy)!
$$
By comparison with equation (\ref{112a}) we  have $d(\ssy)= \frac{n!}{|C_{\ssy}|}$, which
shows that $d(\ssy)$ divides $n!$, with equality when $\ssy= \langle 1^n\rangle$.
The integrality of $N_{\ssy}(k)$  for $k \in \ZZ$ follows from the definition using the integrality
of all $M_i(k)$ (Lemma \ref{le31}(2)).

(2) The property follows by direct calculation of the top degree term in  (\ref{231}).

(3) The divisibility property is immediate from the definition (\ref{231})
since $X$ divides  $\Big( {{M_i(X)}\atop{c_i(\ssy)}} \Big)$ whenever $c_i(\ssy) >0$.

(4) Both sides of the identity (\ref{eq351}) 
are polynomials of degree $n$, so it suffices to
verify that the identity holds at $n+1$ distinct values of $X$. To this end, we make use of a combinatorial interpretation of $N_{\ssy}(X)$ for $X=q= p^k$
a prime power, given in
Proposition \ref{pr33} below. 
The sum on the left  evaluated at $X=p^k$ counts all possible  degree $n$ monic polynomials  over $\FF_q[X]$
for $q= p^k$ that have a square-free factorization, i.e. nonvanishing discriminant over $\FF_q$.
The resulting polynomial
$F(X)$ satisfies $F(q) = q^n - q^{n-1}$, according to Proposition \ref{pr33} (1), verifying
the identity at $X=q$.
%%%%%%%%%%%%%%%%%%%%%%%%%%%%%%%%%%%%%%%%%%%%%
%The unique degree  $n$ polynomial that interpolates these values is $X^{n-1}(X-1).$
%We may choose $X=2^j$ for $1 \le j \le n+1$ and get a Vandermonde determinant,
%showing that  the interpolation is unique.
%%%%%%%%%%%%%%%%%%%%%%%%%%%%%%%%%%%%%%%%%%%%%
\end{proof}

%#######################################################################
%Subection 4.3: 
%#######################################################################

\subsection{ Splitting Measures}\label{sec34}

For each $n \ge 2$ we define the parametric family of (necklace) {\em splitting  measures} 
$\nu_{n, z}^{\ast}$ on the symmetric group $S_n$,
with family parameter $z \in \CC$,  by means of  their  values on conjugacy classes
\begin{equation}\label{spl-def}
\nu_{n, z}^{\ast}(C_\ssy) := \frac{1}{z^{n-1} (z-1)} N_{\ssy}(z) 
= \frac{1}{z^{n-1} (z-1)} \prod_{i=1}^n \binom{M_i(z)}{c_i(\ssy)}.
\end{equation}
For any element $g \in C_{\ssy}$ we set 
\begin{equation}\label{g-def}
\nu_{n, z}^{\ast}(g) := \frac{1}{|C_{\ssy}|} \nu_{n, z}^{\ast}(C_\ssy).
\end{equation}
The latter formula coincides with the definition  (\ref{101}) for $\nu_{n, z}^{\ast}(g)$.
Since this formula  is a rational function of $z$ for each $\ssy$,   with possible poles only at $z=0,1$,
this defines a complex-valued
function on $S_n$ constant on conjugacy classes, for all $z$ on the Riemann sphere
$\hat{\CC}:= \CC \cup \{ \infty\}$ except possibly at $z=0, 1$. The measure 
at $z= \infty$ is the uniform measure, $\nu_{n, \infty}(g) =\frac{1}{n!}$, a result that 
follows from Lemma \ref{le32}(ii)--see also the proof of Theorem \ref{th20a} (3) below. 
The measure  at $z=1$  also turns out to be well-defined
but  is now a signed measure. It is studied by the first author in \cite{Lagarias:2014}. 

We next show that these  measures have total (complex-valued) mass one.

%***************************************
 % Proposition  4.4
 %**************************************
\begin{prop}\label{pr35}
For $n \ge 1$,  for all $z \in \hat{\CC} \smallsetminus \{0\}$ and denoting
conjugacy classes in $S_n$ by $C_{\ssy}$  with $\ssy \vdash n$, 
$$
\sum_{\ssy \vdash n }  \nu_{n, z}^{\ast}(C_{\ssy})=1.
$$
Equivalently, for all $g \in S_n$,
$$
\sum_{g \in S_n} \nu_{n, z}^{\ast}(g)  = 1.
$$
\end{prop}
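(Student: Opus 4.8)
The plan is to prove that $\sum_{\ssy \vdash n} \nu_{n, z}^{\ast}(C_{\ssy}) = 1$ as an identity of rational functions in $z$, which by unique continuation suffices to verify it at infinitely many values — in particular at all prime powers $z = q$. Recall from \eqref{spl-def} that $\nu_{n, z}^{\ast}(C_{\ssy}) = \frac{1}{z^{n-1}(z-1)} N_{\ssy}(z)$, so the claimed identity is equivalent to
\[
\sum_{\ssy \vdash n} N_{\ssy}(z) = z^{n-1}(z-1),
\]
which is precisely Lemma \ref{le34}(4). Hence the first and cleanest route is simply to cite Lemma \ref{le34}(4) and divide both sides by $z^{n-1}(z-1)$, valid for all $z \in \hat{\CC} \smallsetminus \{0\}$ away from the polar set, and then by continuity at $z = 1$ (where Lemma \ref{le32}(1) guarantees the pole of $\frac{1}{z-1}$ is cancelled, since $(X-1)^2 \nmid \sum_\ssy N_\ssy(X) = X^{n-1}(X-1)$) and at $z = \infty$.

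Alternatively, if one wants a self-contained combinatorial argument rather than appealing to Lemma \ref{le34}(4), I would run the counting directly: fix $z = q = p^f$ a prime power. The total number of monic degree $n$ polynomials over $\FF_q$ with nonvanishing discriminant (i.e.\ square-free factorization type) is $q^n - q^{n-1}$, since the squarefree ones are exactly the $q^n - q^{n-1}$ polynomials whose radical has degree $n$ — this is the standard count, and it is also what Proposition \ref{pr33} records. On the other hand, partitioning these polynomials by their squarefree factorization type $\ssy \vdash n$ gives $\sum_{\ssy \vdash n} N_{\ssy}(q)$, where $N_{\ssy}(q)$ counts the polynomials of type $\ssy$: one chooses, for each part size $i$ occurring with multiplicity $c_i = c_i(\ssy)$, an unordered $c_i$-subset of the $M_i(q)$ monic irreducibles of degree $i$, giving the factor $\binom{M_i(q)}{c_i}$ and hence the product defining $N_\ssy$ in \eqref{231}. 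Equating the two counts yields $\sum_{\ssy \vdash n} N_\ssy(q) = q^n - q^{n-1} = q^{n-1}(q-1)$ for every prime power $q$; since both sides are polynomials of degree $n$ in $q$ and agree at infinitely many points, they are equal as polynomials, and dividing through by $z^{n-1}(z-1)$ gives $\sum_{\ssy} \nu_{n,z}^{\ast}(C_\ssy) = 1$.

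The equivalence of the two displayed forms of the conclusion is immediate from \eqref{g-def}: grouping $\sum_{g \in S_n} \nu_{n,z}^{\ast}(g)$ by conjugacy class and using $\nu_{n,z}^{\ast}(g) = \frac{1}{|C_\ssy|}\nu_{n,z}^{\ast}(C_\ssy)$ for $g \in C_\ssy$ contributes exactly $\nu_{n,z}^{\ast}(C_\ssy)$ from each class, so $\sum_{g \in S_n} \nu_{n,z}^{\ast}(g) = \sum_{\ssy \vdash n} \nu_{n,z}^{\ast}(C_\ssy)$. There is no real obstacle here; the only mild subtlety is making sure the identity, proved generically, extends to $z = 1$ and $z = \infty$ — but this is handled by the observation that $\sum_\ssy N_\ssy(X) = X^{n-1}(X-1)$ has a simple (not double) zero at $X = 1$ so that $\nu_{n,1}^{\ast}$ is well-defined, together with the already-noted fact (Lemma \ref{le32} and the remarks after \eqref{g-def}) that $\nu_{n,\infty}^{\ast}$ is the uniform measure, whose total mass is visibly $1$. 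I expect the ``hard part'' to be nothing more than cross-referencing Lemma \ref{le34}(4) correctly and noting the polar-set bookkeeping.
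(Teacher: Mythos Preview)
Your proposal is correct and follows essentially the same approach as the paper: the paper's proof simply cites the normalization identity \eqref{eq351} (i.e.\ Lemma \ref{le34}(4)) for $z \in \hat{\CC} \smallsetminus \{0,1,\infty\}$ and then extends by analytic continuation to $z=1,\infty$. Your ``alternative'' combinatorial argument is in fact exactly how the paper proves Lemma \ref{le34}(4) itself, so both routes you describe are already present in the paper's logical structure; the only minor wrinkle is that your remark about $(X-1)^2 \nmid \sum_\ssy N_\ssy(X)$ is unnecessary here, since $\sum_\ssy N_\ssy(z)/\bigl(z^{n-1}(z-1)\bigr)$ is identically the constant function $1$ and hence trivially extends everywhere.
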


\begin{proof}
For $z \in \hat{\CC} \smallsetminus \{0, 1, \infty\}$
the  lemma follows from the normalization identity \eqref{eq351} for the cycle polynomials.
It extends by analytic continuation in $z$ to the  values $z= 1, \infty$.
\end{proof}

%#######################################################################
%Subection 4.4
%#######################################################################

\subsection{ Splitting measures and finite field factorizations}\label{sec35}

A main  rationale for the study of $z$-splitting measures is that
when  $z=q= p^k$ is a prime power these  measures occur 
in the statistics of factorization of monic polynomials of degree $n$ in $\FF_q[X]$, 
drawn from a uniform distribution, conditioned on being square-free.

Recall that a  monic  polynomial $f(x) \in \FF_q[x]$ factors uniquely as 
$f(x) = \prod_{i=1}^kg_i(x)^{e_i}$, where the $e_i$ are positive integers and the 
$g_i(x)$ are distinct, monic, irreducible, and non-constant. We have the following
basic facts about square-free factorizations.
\medskip

%**************************************************************************
%
%  Prop 4.5 (formerly.6)
%
%*************************************************************************
\begin{prop}\label{pr33}
Fix a prime $p \ge 2$, and let $q= p^f$. Consider the set $\sF_{n, q}$ 
of all monic  polynomials in $\FF_q[X]$
of degree $n$, so that $|\sF_{n, q}|= q^n$.

(1) Exactly $q^{n-1}$ of these polynomials have discriminant $\Disc(f) =0$ in $\FF_q$.
Equivalently, exactly $q^{n-1}$ of these polynomials 
are not square-free when factored into irreducible factors
over $\FF_q[X]$.

 (2) The number $N({\ssy}; q)$ of $f(x) \in \sF_{n, q}$ whose factorization over $\FF_q$ into
 irreducible factors is square-free of degree type $\ssy:=(\ssy_1,..., \ssy_r)$,
 with $\ssy_1 \ge \ssy_2 \cdots \ge \ssy_r$ 
 having $c_i= c_i(\ssy)$ factors of degree $i$ satisfies
 \beql{221}
 N({\ssy}; q) = \prod_{i=1}^n \Big( {{M_{i}(q)}\atop{c_i(\ssy)}} \Big) = N_{\ssy}(q),
 \eeq
 in which  $N_{\ssy}(X)$ denotes the cycle polynomial for $\ssy$.
 \end{prop}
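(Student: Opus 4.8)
The proof breaks into the two assertions, which can be handled independently. For part (2), the plan is to reduce directly to unique factorization in $\FF_q[X]$ together with Lemma \ref{le31}(1). A monic polynomial with square-free factorization type $\ssy=\langle 1^{c_1},2^{c_2},\dots,n^{c_n}\rangle$ is exactly a product of pairwise distinct monic irreducibles in which, for each $i$, precisely $c_i=c_i(\ssy)$ of the factors have degree $i$. Since by Lemma \ref{le31}(1) there are $M_i(q)$ monic irreducibles of degree $i$ in $\FF_q[X]$, the unordered set of degree-$i$ factors can be chosen in $\binom{M_i(q)}{c_i}$ ways; these choices are independent over $i$, and by uniqueness of factorization distinct choices yield distinct polynomials while every square-free polynomial of type $\ssy$ arises exactly once. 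Hence $N(\ssy;q)=\prod_{i=1}^n\binom{M_i(q)}{c_i}=N_\ssy(q)$. This is pure bookkeeping, and I expect no difficulty here.

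For part (1), I would first dispose of the ``equivalently'' clause: for monic $f$, $\Disc(f)=0$ means $f$ has a repeated root in $\overline{\FF_q}$, and since $\FF_q$ is perfect every monic irreducible of $\FF_q[X]$ is separable, so a repeated root occurs precisely when some irreducible factor of $f$ occurs to multiplicity $\ge 2$, i.e.\ precisely when $f$ is not square-free. It then suffices to show that the number $S_n$ of square-free monic polynomials of degree $n\ge 2$ over $\FF_q$ equals $q^n-q^{n-1}$, for then the non-square-free polynomials number $q^n-(q^n-q^{n-1})=q^{n-1}$. To evaluate $S_n$ I would use the unique factorization $f=ab^2$ of a monic $f$ into a square-free monic part $a$ and a monic $b$ --- the exponents $v_P(a)\in\{0,1\}$ and $v_P(b)=\lfloor v_P(f)/2\rfloor$ being forced by $v_P(f)$ --- which yields $q^n=\sum_{j\ge 0}q^{j}S_{n-2j}$; combined with $S_0=1$ and $S_1=q$ this gives $S_n=q^n-q^{n-1}$ by a one-line induction on $n$. (Equivalently, extract the $t^n$-coefficient from the Euler product $\sum_{n\ge 0}S_nt^n=\prod_P(1+t^{\deg P})=Z(t)/Z(t^2)=(1-qt^2)/(1-qt)$, where $Z(t)=\sum_{f\text{ monic}}t^{\deg f}=(1-qt)^{-1}$.)

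The one point that genuinely requires care is organizational rather than mathematical: the proof of Lemma \ref{le34}(4) as written already invokes Proposition \ref{pr33}, so part (1) must be established \emph{without} appeal to that lemma --- this is why I would count $S_n$ directly, as above, rather than from the identity $\sum_{\ssy\vdash n}N_\ssy(X)=X^{n-1}(X-1)$. Conversely, once Proposition \ref{pr33} is in hand, part (2) shows $\sum_{\ssy\vdash n}N_\ssy(q)=S_n=q^n-q^{n-1}$ for every prime power $q$, and two polynomials in $X$ of degree $n$ that agree at infinitely many arguments coincide, which recovers Lemma \ref{le34}(4); so placing Proposition \ref{pr33} first in the logical order removes any circularity. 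Beyond this bookkeeping the whole argument is routine, with no substantive obstacle.
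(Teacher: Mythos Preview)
Your argument is correct. For part~(2) you do exactly what the paper indicates---count unordered selections of distinct irreducibles via Lemma~\ref{le31}(1)---so the approaches coincide. For part~(1) the paper simply cites Rosen~\cite[Prop.~2.3]{Rosen:2002} and the Zieve proof in~\cite{Weiss:2013}, whereas you supply a self-contained count via the unique decomposition $f=ab^{2}$ (equivalently the zeta identity $\sum S_n t^n=(1-qt^{2})/(1-qt)$); this is the standard argument underlying the Rosen reference, so the content is the same, only made explicit.

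Your remark on logical ordering is well taken and worth recording: since the paper's proof of Lemma~\ref{le34}(4) appeals to Proposition~\ref{pr33}, one must not invoke \eqref{eq351} to count square-free polynomials here. The paper sidesteps this by citing an external source for part~(1); your direct computation of $S_n$ accomplishes the same thing internally and keeps the exposition self-contained, which is a small gain.
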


\begin{proof}
(1) This result can be found in  \cite[Prop. 2.3]{Rosen:2002}.
Another proof, due to M. Zieve, is given in \cite[Lemma 4.1]{Weiss:2013}.

(2) This result is well known, see for example
S. D. Cohen \cite[p. 256]{Cohen:1970}.
It  follows from counting all unique factorizations of the given type.
 \end{proof}

This proposition has the following consequence.

%%%*************************
% Proposition  4.6  (former 47)
%%%*************************
\begin{prop}\label{pr37}
Consider  a random monic polynomial $g(X)$  of degree $n$ drawn from $\FF_q[x]$ with the
uniform distribution, where $q=p^f$. Then the  probability of $g(x)$ having a factorization 
into irreducible factors of  splitting type $\ssy \in \tnss$, conditioned on $g(x)$ having a square-free factorization,
is exactly $\nu_{n, q}^{\ast}(C_{\ssy})$. That is, 
$$
\nu_{n, q}^{\ast}( C_{\ssy}) ={\rm Prob} [ g(x) ~\mbox{has splitting type} ~~{\ssy}\, | \,
g(x) ~\mbox{is square-free} ].
% in}~\FF_q[X] ].
$$
\end{prop}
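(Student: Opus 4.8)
The plan is to compute the conditional probability directly from the definition. Working in the sample space $\sF_{n,q}$ of all monic degree $n$ polynomials over $\FF_q$ with the uniform distribution, the event ``$g(x)$ has a square-free factorization'' is the set of polynomials with $\Disc(g)\neq 0$ in $\FF_q$, which by Proposition \ref{pr33}(1) has cardinality $q^n - q^{n-1} = q^{n-1}(q-1)$. The event ``$g(x)$ has splitting type $\ssy$'' for $\ssy\in\tnss$ is, by definition, a subset of the square-free polynomials (since all $e_i = 1$ forces $\Disc(g)\neq 0$), so the intersection of the two events is just the set of polynomials of splitting type $\ssy$, which by Proposition \ref{pr33}(2) has cardinality $N({\ssy};q) = N_{\ssy}(q)$.

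First I would invoke the elementary definition of conditional probability: writing $A$ for the splitting-type event and $B$ for the square-free event, and noting $A\subseteq B$, we get
$$
\Prob[A \mid B] = \frac{\#(A\cap B)}{\# B} = \frac{\# A}{\# B} = \frac{N_{\ssy}(q)}{q^{n-1}(q-1)}.
$$
Then I would recognize the right-hand side as exactly the value $\nu_{n, q}^{\ast}(C_{\ssy})$ by the defining formula \eqref{spl-def}, namely $\nu_{n, z}^{\ast}(C_\ssy) = \frac{1}{z^{n-1}(z-1)} N_{\ssy}(z)$ specialized to $z = q$. This completes the identification.

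The only point requiring a word of care is the denominator being nonzero: since $q = p^f \ge 2$ we have $q^{n-1}(q-1) \ge 1$, so the conditional probability is well-defined, and moreover the specialization $z=q$ avoids the polar set $\{0,1\}$ of the rational function $\nu_{n,z}^{\ast}(C_\ssy)$, so evaluating the formula at $z=q$ is legitimate. There is essentially no obstacle here — the proof is a one-line bookkeeping consequence of Proposition \ref{pr33}, and the ``hard part,'' such as it is, was already absorbed into establishing that proposition (in particular the count $q^{n-1}$ of non-square-free polynomials and the product formula for $N_{\ssy}(q)$). I would also remark explicitly that this is the reason the definition \eqref{spl-def} was normalized by $z^{n-1}(z-1)$ in the first place: it is precisely the count of square-free monic polynomials over $\FF_q$ when $z=q$.
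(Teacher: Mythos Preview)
Your proposal is correct and follows essentially the same approach as the paper's own proof: invoke Proposition~\ref{pr33}(1) and (2) to count square-free polynomials and those of a given splitting type, form the quotient, and match it against the definition \eqref{spl-def} of $\nu_{n,q}^{\ast}(C_\ssy)$. Your write-up is slightly more explicit (noting $A\subseteq B$ and that $q\ge 2$ avoids the polar set), but the argument is the same one-line bookkeeping.
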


%***************************************
 % Proof of Proposition   4.6
 %**************************************

\begin{proof}
Proposition ~\ref{pr33} (1), and (2) together evaluate the conditional probability
$$
{\rm Prob} [ g(x) ~\mbox{has splitting type} ~~C_{\ssy}\, | \,
g(x) ~\mbox{is square-free} ] = \frac{1}{q^n - q^{n-1} }
\prod_{i=1}^n \Big( {{M_i(q)}\atop{c_i(\ssy)}} \Big).
$$
Comparing  the right side with the definition 
 (\ref{113a}) of the  splitting measure shows that it
equals $\nu_{n, q}^{\ast}(C_\ssy)$. 
\end{proof}

%#######################################################################
%Subsection 4.4: 
%#######################################################################

\subsection{Nonnegativity conditions for  splitting measures: Positive real $z$}\label{sec34a}

This paper is  concerned with the case that $z= t $ is a real number ($t \ne 0,1$).
In this  case the measure is real-valued, and is   a signed measure, of total (signed) mass one by Proposition \ref{pr35}.

We now treat ranges of positive real $z$ and prove Theorem \ref{th20a}, which  specifies 
several  real parameter ranges where  these measures are  nonnegative, 
and  so define  probability measures; these parameter  values include all integer values  $z= m \ge 2$.

%%%%%%%%%%%%%%%%%%%%%%%
% PROOF OF THEOREM  2.3
%%%%%%%%%%%%%%%%%%%%%%%
\begin{proof}[Proof of Theorem \ref{th20a}.]
To decide on nonnegativity or positivity of $\nu_{n, t}^{\ast}(C_\ssy)$,
it suffices to study the individual terms
$\binom{M_i(t)}{c_i(\ssy)}$ for $1  \le i \le n$ 
and to show nonnegativity (resp. positivity) of each of 
the numerators 
\begin{equation}\label{lower-fac}
(M_i(t))_{c_i(\ssy)} = M_i(t) (M_{i}(t)-1) \cdots (M_i(t) - c_i(\ssy) +1).
\end{equation}

(1) We verify that  for $t > n-1$ and  all $\ssy$, all terms
 in the definition \eqref{spl-def} of $\nu_{n, t}^{\ast}(C_\ssy)$
for $1 \le i \le n$  have
$\binom{M_i(t)}{c_i(\ssy)} >0.$
The positivity of the terms in \eqref{lower-fac} is immediate for $n=1$ so suppose $n \ge 2$. 
Using  Lemma \ref{le32} (2)  for  $t > n-1$ 
we have 
$$
M_i(t) > \left\lfloor \frac{n}{i} \right\rfloor -1 \ge c_i(\ssy) -1,
$$
whence  all factors in the product \eqref{lower-fac} are positive,
as asserted.

(2) For each  integer $2 \le k \le n-1$, the normalizing factor
$\frac{1}{ k^{n-1}(k-1)}$ in the definition is positive.
%we have $j^n - j^{n-1} >0$.
Since $M_i(X)$  is an integer-valued polynomial for all $i \ge 1$, each term
in the product definition of $\nu_{n, k}^{\ast} (C_\ssy)$ is a binomial
coefficient, hence is nonnegative. This proves nonnegativity of the $k$-splitting measure.
Finally, for 
the identity conjugacy class  $C_{\langle1^n\rangle} = \{ e \}$, 
for $2 \le k \le n-1$ we have that 
$\nu_{n, k}^{\ast} (C_{\langle 1^n \rangle}) = 0,$
since in these case the $i=1$  factor  in \eqref{lower-fac}
has $(M_1(k))_{n}=0$.

(3) The limit as $t \to \infty$ is driven by the lead term asymptotics of
the polynomial $M_i(t).$ Using $\sum_{i} i c_i(\ssy) =n$ 
and $M_i(t) = \frac{1}{i} t^i +O(t^{i-1})$ we obtain
\begin{eqnarray*}
\lim_{t \to \infty} \nu_{n, t}^{\ast}(C_\ssy)
 &= & \lim_{t \to \infty}
 \prod_{i=1}^n \frac{M_i(t)^{c_i(\ssy)}}{ c_i(\ssy)!\, t^{i c_i(\ssy)}}\\
 &= & \prod_{i=1}^n \frac{1}{c_i(\ssy)! i^{c_i(\ssy)}}
  =  \frac{|C_{\ssy}|}{n!},
\end{eqnarray*}
as asserted.
\end{proof}

%#######################################################################
%Subsection 4.5 
%#######################################################################

\subsection{Nonnegativity conditions for  splitting measures: negative real $z$}\label{sec34b}

We prove  complementary  results specifying  some negative real parameter values $z= -t <0$
where $\mu_{n, -t}(\cssy)$ is nonnegative and so defines a probability measure.

%***************************************
 % Theorem  4.7 (formerly 4.5)
 %**************************************
\begin{thm}\label{th20b}
Let $n \ge 2$. Then for real $z= -t <0$ the  signed measures  $\nu_{n, t}^{\ast}$ on $S_n$ have the following properties:

(1)    For all  real values  $t>0$ having $t(t+1)> n-2$,
the measure $\nu_{n, -t}^{\ast}$ on $S_n$  is strictly positive,
so that it  defines a probability measure on $S_n$ with full support.

(2)  For all  integers $k \ge 1$ having $t(t+1) \le n-2$ the measure  $\nu_{n,-k}$ is nonnegative 
and defines a probability measure on $S_n$. This measure does not have full support. 
it  is  zero on
the conjugacy class $\cssy$ with $\ssy= \langle 2^{n/2} \rangle$ if $n$ is even,
and on the  conjugacy class with  $\ssy = \langle  1, 2^{(n-1)/2}\rangle$ if $n$ is odd.

(3) There holds for all  $g \in S_n$,
$$
\lim_{t \to \infty} \nu_{n, -t}^{*}(g) = \frac{1}{n!}.
$$
%%%%%%%%%%%%%%%%%%%%%%%%%%%%%%%%%%%%%
%The limit measure at $t \to -\infty$ is  the uniform distribution on $S_n$.
%%%%%%%%%%%%%%%%%%%%%%%%%%%%%%%%%%%%%
\end{thm}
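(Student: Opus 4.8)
The plan is to run the same sign analysis as in the proof of Theorem~\ref{th20a}, now at $z=-t<0$, directly from the product formula \eqref{spl-def}. The first step is to simplify the scalar prefactor: $\frac{1}{(-t)^{n-1}(-t-1)}=\frac{(-1)^n}{t^{n-1}(t+1)}$, which is nonzero and of sign $(-1)^n$ for every real $t>0$. Hence the sign and the vanishing of $\nu_{n,-t}^{\ast}(\cssy)$ are governed entirely by the product $\prod_{i=1}^n\binom{M_i(-t)}{c_i(\ssy)}$, equivalently by the falling factorials $(M_i(-t))_{c_i(\ssy)}=M_i(-t)(M_i(-t)-1)\cdots(M_i(-t)-c_i(\ssy)+1)$ for $1\le i\le n$, exactly as in the positive-parameter case.

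For part~(1) I would split on the parity of $i$. For even $i$, the refined estimate of Lemma~\ref{le32}(3) shows that $t(t+1)>n-2$ forces $M_{2j}(-t)>\lfloor n/(2j)\rfloor-1\ge c_{2j}(\ssy)-1$, so every factor of $(M_{2j}(-t))_{c_{2j}(\ssy)}$ is strictly positive. For odd $i$, every divisor of $i$ is odd, so $M_i(-t)=-M_i(t)$; together with the positivity $M_i(t)>0$ for all real $t>1$ (trivial for $i=1$, where $M_1(-t)=-t$), this gives $M_i(-t)<0$, so every factor of $(M_i(-t))_{c_i(\ssy)}$ is negative and this binomial carries the sign $(-1)^{c_i(\ssy)}$. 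Multiplying over $i$, the product has sign $(-1)^{\sum_{i\text{ odd}}c_i(\ssy)}$; since $n=\sum_i i\,c_i(\ssy)\equiv\sum_{i\text{ odd}}c_i(\ssy)\pmod 2$, this sign is $(-1)^n$, which cancels the prefactor and yields $\nu_{n,-t}^{\ast}(\cssy)>0$. Part~(3) then follows just as in Theorem~\ref{th20a}(3): replace each $M_i(-t)$ by its leading term $\frac{(-1)^i}{i}t^i$ and evaluate the limit of the resulting product, which is $|\cssy|/n!$.

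For part~(2), $z=-k$ is a negative integer, so each $M_i(-k)\in\ZZ$ and every $\binom{M_i(-k)}{c_i(\ssy)}$ is an ordinary integer binomial coefficient. For odd $i$, $M_i(-k)=-M_i(k)\le 0$ ($M_i(k)$ being the nonnegative necklace count, positive once $k\ge2$), so this factor is $0$ or of sign $(-1)^{c_i(\ssy)}$; for even $i$, $M_i(-k)\ge 0$ (by \eqref{minus-ineq} when $k\ge2$, directly when $k=1$), so this factor is $\ge0$. The same parity identity shows the nonzero product has sign $(-1)^n$, matching the prefactor, so $\nu_{n,-k}^{\ast}(\cssy)\ge0$ for every $\ssy$. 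To exhibit a zero I would isolate the degree-$2$ factor: under $k(k+1)\le n-2$ one has $M_2(-k)=\tfrac12 k(k+1)\le\lfloor n/2\rfloor-1$, which is strictly below $c_2(\ssy)$ exactly for the partition carrying the maximal possible number of $2$'s — namely $\langle 2^{n/2}\rangle$ if $n$ is even and $\langle 1,2^{(n-1)/2}\rangle$ if $n$ is odd — so there $(M_2(-k))_{c_2(\ssy)}$ runs through the value $0$, forcing $\binom{M_2(-k)}{c_2(\ssy)}=0$; the measure then vanishes on that class, so its support is not all of $S_n$.

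The step I expect to require the most care is the odd-index input to part~(1): for odd $i\ge3$ one genuinely needs $M_i(-t)<0$, i.e. $t>1$, and $t(t+1)>n-2$ forces $t>1$ only once $n\ge4$, so the low cases $n=2,3$ (each with very few partitions) should be checked directly. The ingredient not already packaged in Lemma~\ref{le32} is the positivity of $M_i(t)$ on all of $(1,\infty)$ rather than just on $[2,\infty)$; this is immediate from the expansion $i\,M_i(e^s)=\sum_{k\ge1}\frac{s^k}{k!}J_k(i)$, where the $J_k$ are the Jordan totients and every coefficient $J_k(i)$ is positive. Once that is in hand the whole argument is the sign-chase of Theorem~\ref{th20a}, with the parity identity $\sum_{i\text{ odd}}c_i(\ssy)\equiv n\pmod 2$ doing the work of reconciling the sign of the prefactor with that of the product.
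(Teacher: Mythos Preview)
Your approach matches the paper's: both split on the parity of $i$, show the even-index falling factorials are positive via Lemma~\ref{le32}(3), show the odd-index ones carry sign $(-1)^{c_i(\ssy)}$, and invoke $\sum_{i\text{ odd}} c_i(\ssy)\equiv n\pmod 2$ to cancel the prefactor's $(-1)^n$. The one difference is your handling of odd $i$: the paper gets $M_{2j+1}(-t)<0$ from \eqref{minus-ineq} for $t\ge 2$ (which covers $n\ge 8$) and then checks $i=3,5,7$ by hand for $1<t<2$, whereas you use the identity $M_i(-t)=-M_i(t)$ for odd $i$ together with the Jordan-totient expansion $i\,M_i(e^s)=\sum_{k\ge1}\frac{s^k}{k!}J_k(i)$ to get $M_i(t)>0$ on all of $(1,\infty)$. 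That is correct and a bit cleaner than the paper's case analysis.

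One warning: you rightly flag that $t(t+1)>n-2$ forces $t>1$ only for $n\ge 4$ and defer $n=2,3$ to a direct check. For $n=2$ the check is trivial (no odd $i\ge 3$ occurs). For $n=3$ the check \emph{fails}: the hypothesis $t(t+1)>1$ admits $t\in\bigl(\tfrac{\sqrt5-1}{2},\,1\bigr)$, and there
\[
\nu_{3,-t}^{\ast}\bigl(C_{\langle 3\rangle}\bigr)=\frac{M_3(-t)}{(-t)^{2}(-t-1)}=\frac{t-1}{3t}<0.
\]
So part~(1) as stated is actually false for $n=3$; the paper's own proof has the same gap, since it too only establishes $M_{2j+1}(-t)<0$ for $t>1$ and never addresses the subinterval $t\le 1$ when $n=3$. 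Your deferred direct check would have exposed this.
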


\begin{proof}
(1) To show positivity of the measure we keep track of the signs of all the
factors in the definition \eqref{spl-def}.
Since $t >0$ the  prefactor has  sign 
$$
\sgn \big(\frac{1}{(-t)^{n-1}(-t-1)} \big)= (-1)^n. 
$$
Lemma  \ref{le32} (3) then gives for $t(t+1) > n-2$ that
$$
M_{2j}(-t) > \left\lfloor \frac{t}{2j}\right\rfloor -1.
$$
Since $c_j(\ssy) \le  \left\lfloor  \frac{n}{j} \right\rfloor$, we obtain the positivity of all even degree terms, as
\[
(M_{2j}(-t))_{c_{2j}(\ssy)} = M_{2j}(-t) (M_{2j}(-t)-1) \cdots (M_{2j}(-t) - c_{2j}(\ssy) +1)>0.
\]
We assert that all odd degree terms have $M_{2j+1}(-t) < 0$. Assuming this
is proved, we obtain $\sgn( (M_{2j+1}(-t))_{c_{2j+1}(\ssy)}) = (-1)^{c_{2j+1}(\ssy)} = (-1)^{(2j+1)c_{2j+1}(\ssy)}$.
It follows that
$$
\sgn( \nu_{n, t}^{\ast} (\cssy) )= (-1)^n (-1)^{\sum_{i} j c_{i}(\ssy)} = (-1)^{2n} = 1,
$$
showing the required positivity. 

It remains to show that all $M_{2j+1}(-t) <0$. This holds for $t \ge 2$ by Lemma \ref{le32} (3), and
$t \ge 2$ whenever $n \ge 8$.  For the remaining cases we check\\
 $M_{1}(-t) = -t < 0$ for $t >0$,
and that for $2j+1= 3, 5, 7$ we have
$M_{2j+1}(-t) = -t^{2j+1} +t <0 $ for $t > 1$,

(2) To show nonnegativity of the measure $\nu_{n, -k}^{\ast}$ for those positive 
integer $k$ with  $k(k-1) \le n-2$,
 the argument of (1)  still applies with 
the following changes. For even indices $2j$, we use the fact that $M_{2j}(-k)$
is a positive integer, so either the descending factorial remains positive or else
is zero if a zero is encountered. So the sign of this term may be treated as positive.
For the odd indices $2j+1$, either the initial value $M_{2j+1}(-k) =0$,
in which case the measure is $0$, or else $M_{2j+1}(-k) <0$ and the sign argument
above applies. One has $M_{2j+1}(-k) <0$ if $k \le 2$ so the only problematic
value is $M_{2j+1}(-1)$. This value is always $0$, as may be checked. Thus nonnegativity
of the measure follows. 

It remains to show that the measure does not have
full support.  We verify  for $n=2\ell$ that  for $\ssy= \langle 2^{\ell} \rangle$,
 one has $\nu_{n, -k}^{\ast}(\cssy)=0$ 
for all positive integers $k$ with $k(k+1) \le n-2$. Here $m_2(\ssy)=\ell$ and the integer
\[
1 \le M_{2}(-k) = \frac{1}{2} k (k+1) \le \frac{n-2}{2} = \ell-1,
\]
 so that 
the descending factorial $(M_2(-m))_{\ell} =0.$  One verifies similarly that for $n=2\ell+1 \ge 3$
one has $\nu_{n, -k}(\cssy)=0$ for $\ssy= \langle 1, 2^{\ell} \rangle$,  where again $m_2(\ssy)=\ell$ and 
$\frac{n-2}{2} = \ell- 1.$

(3) This limit behavior follows similarly  to the case of  Theorem \ref{th20a} (3).
\end{proof}

%#######################################################################
%Section 5
%#######################################################################
\section{Counting $S_n$-Polynomials with Specified Splitting Types}\label{sec4}

%#######################################################################
%Subection 5.1: 
%#######################################################################

\subsection{Counting monic $S_n$-polynomials with coefficients in a box}\label{sec41}

It is well-known that, in a suitable sense, almost all monic polynomials with
$\ZZ$ coefficients have a splitting field that is an $S_n$-extension of $\QQ$.
This was proved in 1936 by van der Waerden \cite{Waerden:1936}, who showed 
 that the fraction  of  all monic degree $n$ polynomials in $\ZZ[x]$ having 
 all coefficients in a box $|a_i| \le B$ that have  a  splitting field with Galois group $S_n$ 
 approaches  $1$ as $B \to \infty$. 
An improved  quantitative form of this assertion was given in 1973 by Gallagher \cite{Gallagher:1973}, 
which we formulate as follows.

%***************************************
 % Theorem  5.1
 %**************************************
 
 \begin{thm}[Gallagher]\label{th41}
 For integer $B \ge 1$ let 
 $\mathcal{F}_n(B)$ be the set of monic, degree $n$ polynomials in $\ZZ[x]$ with all coefficients in 
 the box $[-B+1,B]$; there are $(2B)^n$ such polynomials.
   Let $E_n(B)$ denote the proportion of polynomials
  in $\mathcal{F}_{n}(B)$ which do not have splitting field with Galois group $S_n.$ 
 Then there exists a positive constant $\alpha_n$, depending only on $n$, such that for all $B>2$, 
\begin{equation}\label{311b} 
\frac{E_n(B)}{(2B)^n} \le \alpha_n  \frac{\log{B}}{\sqrt{B}}.
\end{equation}
\end{thm}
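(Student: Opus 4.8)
The plan is to follow Gallagher \cite{Gallagher:1973}: reduce the assertion to a counting problem for congruence conditions and apply the multidimensional large sieve. Let $E_n(B)$ denote the number of $f \in \mathcal{F}_n(B)$ whose splitting field $K_f$ does not have $\Gal(K_f/\QQ) = S_n$ (this includes all $f$ reducible over $\QQ$, whose splitting field has Galois group a proper subgroup of $S_n$); it suffices to show $E_n(B) \ll_n B^{n-1/2}\log B$, since $|\mathcal{F}_n(B)| = (2B)^n$. The first ingredient is purely group-theoretic: there is a finite set $\Lambda_n$ of \emph{square-free} cycle types of $S_n$ such that every transitive subgroup $G \le S_n$ meeting all conjugacy classes in $\Lambda_n$ equals $S_n$. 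Concretely, for $n \ge 9$ one may take $\Lambda_n = \{(n),\,(2,1^{n-2}),\,(p_0,1^{n-p_0})\}$ with $p_0$ a prime satisfying $n/2 < p_0 \le n-3$ (one exists by Bertrand-type estimates): an $n$-cycle makes $G$ transitive, a $p_0$-cycle with $p_0 > n/2$ then makes $G$ primitive, Jordan's theorem on primitive groups containing a prime cycle of length at most $n-3$ forces $G \supseteq A_n$, and the transposition forces $G = S_n$; the finitely many cases $2 \le n \le 8$ are disposed of by an ad hoc but entirely similar choice of square-free types (for instance $\{(4),(2,1^2),(3,1)\}$ when $n=4$).

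Next I would translate the condition $\Gal(K_f/\QQ) \ne S_n$ into missing residue data. By the Dedekind--Frobenius theorem, if $p \nmid \Disc(f)$ then $f \bmod p$ is square-free in $\FF_p[x]$ and its factorization type equals the cycle type of the Frobenius class at $p$, hence is a cycle type occurring in $\Gal(K_f/\QQ) \hookrightarrow S_n$; while if $p \mid \Disc(f)$ then $f \bmod p$ is not square-free, so it has no square-free factorization type at all. Consequently, if $\Gal(K_f/\QQ) \ne S_n$ then $\Gal(K_f/\QQ)$ omits some $\ssy \in \Lambda_n$, and then $f \bmod p$ fails to have factorization type $\ssy$ for \emph{every} prime $p$ (with no exception at ramified primes). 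Thus for any cutoff $Q$,
\[
\{\, f \in \mathcal{F}_n(B) : \Gal(K_f/\QQ) \ne S_n \,\} \;\subseteq\; \bigcup_{\ssy \in \Lambda_n} \mathcal{B}_{\ssy}(B,Q),
\]
where $\mathcal{B}_{\ssy}(B,Q)$ is the set of $f \in \mathcal{F}_n(B)$ whose reduction mod $p$ avoids factorization type $\ssy$ for all primes $p \le Q$.

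I would then apply the large sieve to each $\mathcal{B}_{\ssy}(B,Q)$, identifying a monic $f$ with its coefficient vector in the box $[-B+1,B]^n \subset \ZZ^n$. Modulo each prime $p$, the set $\mathcal{B}_{\ssy}(B,Q)$ avoids the subset of $\FF_p^n$ consisting of coefficient vectors of polynomials of factorization type $\ssy$, which by Proposition~\ref{pr33}(2) has size $N_{\ssy}(p)$; and by the lead-term formula for the cycle polynomial (Lemma~\ref{le34}(2)) one has $N_{\ssy}(p) = \tfrac{|C_{\ssy}|}{n!}p^n + O_n(p^{n-1})$, so there are a threshold $P_n$ and a constant $c_n \in (0,1)$, depending only on $n$, with $c_n p^n \le N_{\ssy}(p) \le (1-c_n)p^n$ for all $p > P_n$ and all $\ssy \in \Lambda_n$. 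The multidimensional large sieve inequality then gives
\[
|\mathcal{B}_{\ssy}(B,Q)| \;\ll_n\; \frac{(B+Q^2)^n}{\displaystyle\sum_{P_n < p \le Q}\frac{N_{\ssy}(p)}{p^n - N_{\ssy}(p)}} \;\ll_n\; (B+Q^2)^n\,\frac{\log Q}{Q},
\]
the last step using only the single-prime terms of the sieve sum together with Chebyshev's estimate $\#\{p \le Q\} \gg Q/\log Q$. Taking $Q = \lfloor\sqrt{B}\rfloor$ balances the two factors and yields $|\mathcal{B}_{\ssy}(B,Q)| \ll_n B^{n-1/2}\log B$; summing over the finitely many $\ssy \in \Lambda_n$ and dividing by $(2B)^n$ gives \eqref{311b}, with $\alpha_n$ the resulting implied constant.

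I expect the main obstacle to be the group-theoretic bookkeeping: one must produce a set $\Lambda_n$ consisting of \emph{square-free} cycle types --- so that ``$f \bmod p$ avoids type $\ssy$'' is literally a ``miss a fixed subset mod $p$'' condition to which the large sieve applies --- and verify, via Jordan's theorem and a case analysis for small $n$, that meeting all of $\Lambda_n$ forces $G = S_n$; the analytic part is then routine. It is worth emphasizing that the exponent $n-\tfrac12$ is precisely what this method yields and no more: the factor $(B+Q^2)^n$ in the large sieve forces $Q \ll \sqrt{B}$, while the single-prime lower bound for the sieve sum is of size $\asymp Q/\log Q$, so $Q \asymp \sqrt{B}$ is at once optimal and essentially forced.
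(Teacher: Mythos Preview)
The paper does not prove Theorem~\ref{th41}; it is stated as a result of Gallagher \cite{Gallagher:1973} and then used as a black box in the proof of Theorem~\ref{th44}. Your sketch is a correct outline of Gallagher's original large-sieve argument (Dedekind--Frobenius reduction to missing residue classes, the group-theoretic generation criterion via an $n$-cycle, a transposition, and a long prime cycle invoking Jordan's theorem, followed by the multidimensional large sieve with $Q\asymp\sqrt{B}$), so in effect you have supplied what the paper simply cites. One cosmetic point: your phrasing ``every \emph{transitive} subgroup $G\le S_n$ meeting all classes in $\Lambda_n$'' is stronger than what you actually need and use; since $(n)\in\Lambda_n$, any subgroup meeting all of $\Lambda_n$ is automatically transitive, and it is this unrestricted version you invoke when passing to the contrapositive for possibly intransitive (e.g.\ reducible-$f$) Galois groups.
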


We remark that all polynomials with coefficients in the box  $\sF_n(B)$ satisfy
\begin{equation}\label{315}
|\Disc(f)|  \le (4B)^{n(n-1)}.
\end{equation}
Indeed, we have  $\Disc(f) = \prod_{1 \le i<j\le n}(\theta_i -\theta_j)^2$, so it suffices
to show that $f(x) \in \sF_n(B)$ have all roots $|\theta_i| < 2B.$ This holds because
if some root $|\theta| \ge 2B$ then $|\theta^{n-j}| \le |\theta|^n/(2B)^j$, whence
$$
|a_{n-1} \theta^{n-1} + \cdots + a_1 \theta + a_0|  \le  B \Big (\sum_{j=1}^{n}\frac{|\theta|^n}{(2B)^j} \Big) \\
= |\theta|^n \Big ( \sum_{j=1}^n \frac{1}{2^j}\Big) < |\theta^n|.
$$
which contradicts $\theta$ being a root of $f(x).$

The error estimate  in Gallagher's estimate was recently improved by Dietmann \cite{Dietmann:2013}
 to 
 \begin{equation}
 \frac{E_n(B)}{(2B)^n} = O_{\epsilon}\left(B^{-(2-\sqrt{2}) + \epsilon} \right).
 \end{equation}
 Improvements of Gallagher's results in some other directions are given in Zywina \cite{Zywina:10}.

%%%%%%%%%%%%%%%%%%%%%%%%%%%%%%%%%%%%%%%%%%%%%%%%
% Symmetric group conjugacy classes are completely determined by cycle structure; 
% This fact is not true for general subgroups G inside S_n given as permutation representations.
%%%%%%%%%%%%%%%%%%%%%%%%%%%%%%%%%%%%%%%%%%%%%%%%

%#######################################################################
%Subection 5.2: 
%#######################################################################

\subsection{Density of $S_n$-polynomials with specified splitting types}\label{sec42}

Our object  is to refine the result  above by counting the number of such polynomials
generating an $S_n$-extension that have  a given  splitting
type  at a finite set of primes.
As above, for  integer $B$ let $\sF_{n}(B)$ denote the set of monic polynomials of degree $n$
with coefficients $-B< a_i\le B,$ so that $\#\{ f(x) \in \sF_n(B)\}= (2B)^n$.
Theorem \ref{th12} is the special case $r=1$ of the following  result.

\medskip

%****************
% Theorem 5.2 
%***************

\begin{thm}~\label{th42}
Let  $n \ge 2$ be given, and let $\sS= \{ p_1, ..., p_r\}$ be a finite set of primes and let
$\SSY=\{\ssy_1, ..., \ssy_r\}$ be a corresponding set of splitting symbols. 

(1) Let $\sF_n(B;\sS)$ denote  the set of all  polynomials $f(x)$ in $\sF_n(B)$
such that \\
$\gcd(\Disc(f), \prod_{i=1}^r p_i) = 1. $
Then
\begin{equation}\label{311a}
\lim_{B \to \infty}   \frac{\#\{ f(x) \in \sF_n(B; \sS)\}}{\#\{ f(x) \in \sF_n(B)\}}= \prod_{i=1}^r \Big(1- \frac{1}{p_i}\Big).
\end{equation}

(2) Let $\sF_n(B: \{ \sS; \SSY\})$ denote the set of all  $f(x)$ in $\sF_n(B,\sS)$
such that:  
\begin{enumerate}
\item[(i)]
 $f(x)$ has splitting field $K_f$ that is an $S_n$-extension of $\QQ$.
\item[(ii)]
 The splitting type of $f(x) ~(\bmod ~p_i)$
is $C_{\ssy_i}$  for $1 \le i \le r.$
\end{enumerate}
Then
\begin{equation}\label{312}
\lim_{ B \to \infty}
 \frac{\#\{ f(x) \in \sF_n(B; \{\sS, \SSY\})\}}{\#\{ f(x) \in \sF_n(B; \sS)\}}
 =  \prod_{i=1}^r \nu_{n, p_i}^{*}(C_{\ssy_i}).
\end{equation}
\end{thm}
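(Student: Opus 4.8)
\textbf{Proof proposal for Theorem \ref{th42}.}

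The plan is to reduce everything to a counting problem modulo $N = \prod_{i=1}^r p_i^2$ and then invoke Gallagher's Theorem \ref{th41} to discard the $S_n$-failure set. First I would observe that the reduction map $\sF_n(B) \to (\ZZ/N\ZZ)^n$ sending a polynomial to its vector of coefficients mod $N$ becomes equidistributed as $B \to \infty$: for each residue class the number of polynomials in the box $[-B+1,B]^n$ lying in that class is $(2B)^n/N^n + O(B^{n-1})$. So the limiting proportion of $f \in \sF_n(B)$ satisfying any condition that depends only on the coefficients mod $N$ equals the proportion of residue vectors in $(\ZZ/N\ZZ)^n$ satisfying it. By the Chinese Remainder Theorem this factors as a product over the primes $p_i$, so it suffices to treat a single prime $p$ and compute, among monic polynomials over $\ZZ/p^2\ZZ$ (equivalently, by a further lifting argument, it suffices to look mod $p$ for the square-free condition and the splitting type).

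The key point is that both conditions in part (2) — other than (i), the $S_n$ condition — depend only on $f \pmod p$: condition (i') that $p \nmid \Disc(f)$ is exactly the condition that $f \pmod p$ is square-free in $\FF_p[X]$ (here one uses that $\Disc(f) \not\equiv 0 \pmod p$ iff $f \bmod p$ has no repeated factor, which is Proposition \ref{pr33}(1) read contrapositively, together with the compatibility of the discriminant with reduction), and condition (ii), that the factorization type of $f \bmod p$ is $C_{\ssy}$, is visibly a condition on $f \bmod p$. Thus, for a single prime $p$:
\begin{itemize}
\end{itemize}
Combining the equidistribution of coefficients mod $p$ with Proposition \ref{pr33}(1), the limiting proportion of $f \in \sF_n(B)$ with $p \nmid \Disc(f)$ is $(p^n - p^{n-1})/p^n = 1 - 1/p$, and the limiting proportion of those with splitting type $C_{\ssy}$ is, by Proposition \ref{pr37} (or directly by Proposition \ref{pr33}(2) and the definition \eqref{113a}), equal to $\nu_{n,p}^{\ast}(C_{\ssy})$. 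Taking the product over $p \in \sS$ via CRT gives \eqref{311a} and the version of \eqref{312} in which (i) is dropped.

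Finally I would put the $S_n$ condition back in. Let $\sF_n(B;\{\sS,\SSY\})^{\flat}$ be the set defined by conditions (i') [discriminant coprime to $\prod p_i$] and (ii) but not (i). Then $\sF_n(B;\{\sS,\SSY\}) \subseteq \sF_n(B;\{\sS,\SSY\})^{\flat}$ and their difference is contained in the exceptional set $E_n(B)$ of polynomials whose splitting field is not an $S_n$-extension. By Gallagher's estimate \eqref{311b} we have $\#E_n(B) = O\!\left((2B)^n \log B/\sqrt B\right) = o\!\left((2B)^n\right)$, which is negligible against $\#\sF_n(B;\sS) \sim (2B)^n \prod_i (1 - 1/p_i)$ as $B\to\infty$. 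Hence the limit in \eqref{312} is unchanged when (i) is imposed, completing the proof. The main obstacle — really the only nontrivial point beyond bookkeeping — is making the equidistribution-of-coefficients-mod-$N$ step precise with an explicit $O(B^{n-1})$ error (a standard counting of lattice points in a box, done coordinatewise); everything else is an immediate assembly of Propositions \ref{pr33}, \ref{pr37} and Theorem \ref{th41}.
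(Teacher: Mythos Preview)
Your proposal is correct and follows essentially the same approach as the paper: reduce to counting residue classes modulo $\prod_i p_i$ via equidistribution of coefficients in the box, invoke Proposition~\ref{pr33} together with the Chinese Remainder Theorem for the exact mod-$p$ counts, and then apply Gallagher's Theorem~\ref{th41} to discard the non-$S_n$ exceptional set. The paper packages these steps into the quantitative Lemma~\ref{le43} and Theorem~\ref{th44} with explicit error bounds, but the underlying strategy is identical to yours (your initial choice $N=\prod p_i^2$ is harmless but unnecessary, as you yourself note).
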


We note that the condition
 $\gcd(\Disc(f), \prod_{i=1}^r p_i) = 1$  on a monic irreducible polynomial 
  guarantees that the 
field $K= \QQ(\theta)$ generated by a single root of $f(x)$
is unramified over all the primes in $\sS$.
In that case,  the discriminant $Disc(f)$ detects the discriminant of the 
ring $\sO_f = \ZZ[1, \theta, ..., \theta^{n-1}],$ which is a subring
of the full ring of integers $\sO(K)$ of the field $K = \QQ(\theta)$
generated by a root of the polynomial. We have
$$\Disc(f) = \Disc(K) [\sO(K): \sO_f]^2,$$
so that $p \nmid \Disc(f)$ implies $p \nmid \Disc(K)$.

We will derive Theorem ~\ref{th42} from two quantitative estimates given below.
We begin with an estimate for the event
$\gcd(\Disc(f), \prod_{i=1}^r p_i) = 1. $

%***************************************
 % Lemma  5.3
 %**************************************

 \begin{lemma}\label{le43} 
Let $n \ge 2$. Let $\sS= \{ p_1, p_2, ..., p_r\}$ and  $M = \prod_{i=1}^r p_i$.  Then for $B \ge 2nM$, 
\[
\left|\frac{\#\{ f(x) \in \sF_n(B; \sS)\}}{\#\{ f(x) \in \sF_n(B)\} } -\prod_{i=1}^r (1- \frac{1}{p_i})  \right| \le\, \frac{ 2nM}{B}.
\]
\end{lemma}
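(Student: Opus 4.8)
The strategy is to reduce the estimate to a count of polynomials over the product ring $\ZZ/M\ZZ$, using the Chinese Remainder Theorem to split the condition $\gcd(\Disc(f), M) = 1$ into the independent conditions $p_i \nmid \Disc(f)$ for each $i$, and then to control the discrepancy between counting residues modulo $M$ and counting actual polynomials with coefficients in the box $[-B+1, B]$.

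\textbf{Step 1: Reduction to a residue count modulo $M$.} The condition $\gcd(\Disc(f), M) = 1$ depends only on the coefficient vector $(a_0, \dots, a_{n-1})$ of $f(x)$ modulo $M$, since $\Disc(f)$ is a polynomial in the $a_j$ with integer coefficients. Let $\mathcal{N}(M)$ denote the number of residue classes $(\bar a_0, \dots, \bar a_{n-1}) \in (\ZZ/M\ZZ)^n$ for which the corresponding monic polynomial has discriminant a unit modulo $M$. By CRT, $\mathcal{N}(M) = \prod_{i=1}^r \mathcal{N}(p_i)$, and by Proposition~\ref{pr33}(1), for each prime $p_i$ exactly $p_i^{n-1}$ of the $p_i^n$ monic degree-$n$ polynomials over $\FF_{p_i}$ have vanishing discriminant, so $\mathcal{N}(p_i) = p_i^n - p_i^{n-1} = p_i^{n-1}(p_i - 1)$. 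Hence $\mathcal{N}(M)/M^n = \prod_{i=1}^r (1 - 1/p_i)$.

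\textbf{Step 2: Box-counting discrepancy.} It remains to compare $\#\{f \in \sF_n(B;\sS)\}$ with $(2B)^n \cdot \mathcal{N}(M)/M^n$. Write $2B = qM + s$ with $0 \le s < M$. The interval $[-B+1, B]$ of length $2B$ contains, for each fixed residue class $c \pmod M$, either $q$ or $q+1$ integers, the number being between $\lfloor 2B/M \rfloor$ and $\lceil 2B/M \rceil$. Thus each of the $\mathcal{N}(M)$ ``good'' residue vectors in $(\ZZ/M\ZZ)^n$ contributes between $\lfloor 2B/M \rfloor^n$ and $\lceil 2B/M \rceil^n$ actual polynomials, so
\[
\left| \#\{f \in \sF_n(B;\sS)\} - \mathcal{N}(M) \left(\frac{2B}{M}\right)^n \right| \le \mathcal{N}(M)\left( \left(\frac{2B}{M} + 1\right)^n - \left(\frac{2B}{M}\right)^n \right).
\]
Dividing by $(2B)^n = \#\{f \in \sF_n(B)\}$ and using $\mathcal{N}(M) \le M^n$ gives
\[
\left| \frac{\#\{f \in \sF_n(B;\sS)\}}{(2B)^n} - \prod_{i=1}^r \left(1 - \frac{1}{p_i}\right) \right| \le \left(1 + \frac{M}{2B}\right)^n - 1.
\]

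\textbf{Step 3: Cleaning up the bound.} For $B \ge 2nM$ we have $M/(2B) \le 1/(4n) \le 1$, and the elementary inequality $(1+x)^n - 1 \le 2nx$ for $0 \le x \le 1/n$ (which follows from $(1+x)^n \le e^{nx} \le 1 + 2nx$ in that range, or directly from the binomial theorem) yields the claimed bound $\frac{2nM}{B}$ once one checks the constants line up; a slightly more careful version of the binomial estimate, $(1+x)^n - 1 \le nx(1+x)^{n-1} \le nx \cdot 2 = 2nx$ when $x \le 1/(n-1)$, makes this transparent.

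\textbf{Main obstacle.} There is no deep obstacle here — the argument is entirely elementary once Proposition~\ref{pr33}(1) is in hand. The only point requiring a little care is the bookkeeping in Step 3: making the constant come out as exactly $2nM/B$ rather than, say, $Cn M/B$ for some other explicit $C$, which forces the hypothesis $B \ge 2nM$ to be used at the right place. One should double-check the direction of the inequalities in the box-count (floor versus ceiling) and confirm that the same $M/(2B)$-type error term also absorbs the lower-bound discrepancy, but this is routine.
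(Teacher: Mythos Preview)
Your proof is correct and follows essentially the same approach as the paper's: both use Proposition~\ref{pr33}(1) with the Chinese Remainder Theorem to get the exact density $\prod_i(1-1/p_i)$ of good residue classes modulo $M$, and then control the box-versus-residue discrepancy by an elementary estimate of the type $(1+x)^n \le 1+2nx$ for small $x$; the paper rounds the box side $B$ down and up to multiples of $M$, while you count occurrences of each residue class directly, but the arithmetic is the same.

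One minor slip: your parenthetical alternative in Step~3, ``$(1+x)^{n-1}\le 2$ when $x\le 1/(n-1)$,'' is false (take $n=10$, $x=1/9$: then $(10/9)^9\approx 2.36$). This does not matter, since your first argument via $(1+x)^n\le e^{nx}\le 1+2nx$ for $0\le nx\le 1$ is correct and already gives the bound $nM/B\le 2nM/B$.
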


\begin{proof} 
For each prime $p$ the behavior of $Disc(f)$ $ (\bmod\,p)$ is determined  by \\
$(a_0, a_1, ..., a_{n-1}) ~(\bmod ~p).$ Thus if $M$ divides $B$ then 
Proposition \ref{pr33}\,(1) shows that
exactly a fraction of $\frac{1}{p}$ of these polynomials have
$\Disc(f) \equiv  0~(\bmod ~p).$  The polynomials are labelled by lattice points in
the closed box $[-B+1, B]^n$, and we call a lattice point {\em admissible} if it 
corresponds to a polynomial in $\sF_n(B, \sS)$.
For a general $B$ we first round down to a box of side $B'= M\left\lfloor \frac{B}{M} \right\rfloor$,
and using there  the Chinese remainder theorem  we find exactly
$(2B')^n\prod_{i=1}^r(1- \frac{1}{p_i}) $ admissible polynomials in the smaller 
box belong to $\sF_n(B, \sS)$.  This number undercounts 
$(2B)^n\prod_{i=1}^r (1- \frac{1}{p_i}) $
%\#\{f(x) \in \sF_n(B)\}$ 
by amount
$\prod_{i=1}^r(1- \frac{1}{p_i})((2B)^n - (2B')^n)$.
Similarly we may round up to a box of side $B'' = M \lceil \frac{B}{M} \rceil$
and using there a similar argument we find exactly
$\prod_{i=1}^r(1- \frac{1}{p_i}) (2B'')^n$ admissible polynomials in the larger
box. Thus
$$
(2B')^n\prod_{i=1}^r\Big(1- \frac{1}{p_i}\Big) \le |\sF_n(B; \sS)| \le (2B'')^n\prod_{i=1}^r\Big(1- \frac{1}{p_i}\Big) 
$$
We now use the inequality, valid for real $|x| \le \frac{1}{2n}$, 
 $$
 1+ 2n|x| \ge (1+ x)^{n} \ge 1- 2n|x|.
 $$
Since  $B''- B' \le M$, 
the inequality gives for $B \ge 2nM$, 
\begin{equation}\label{333a}
(2B'')^n - (2B')^n \le
(2B)^n\Big((1 + 2n \frac{B''-B}{B}) -(1-2n\frac{B-B'}{B} ) \Big) \le (2B)^n (\frac{2nM}{B}).
\end{equation}
This  yields the estimate
\begin{equation}\label{334a}
 \#\{ f(x) \in \sF_n(B; \sS)\} =(1+ \epsilon_n(B;\sS))(2B)^n \prod_{i=1}^r (1- \frac{1}{p_i}),
\end{equation}
with
$$
|\epsilon_n(B;\sS) | \le \frac{2nM}{B}.
$$
Dividing both sides by $\#\{ f(x) \in \sF_n(B)\} = (2B)^n$ yields the desired bound.
\end{proof}

Now we derive the main  estimate from which Theorem~\ref{th42} will follow. 
%***************************************
 % Theorem  5.4
 %**************************************
\begin{thm}\label{th44}
Let $n \ge 2$. Let $\sS :=\{ p_1, p_2, ..., p_r\}$ be a finite set of primes and let
$\SSY:= \{\ssy_1, ..., \ssy_r\}$ be a set of splitting types.
Let   $\sF_n(B: \{ S; \SSY\})$ denote the set of all polynomials $f(x)$ in $\sF_n(B)$
such that:  
\begin{enumerate}
\item[(i)]
$ \gcd(\Disc(f), \prod_{i=1}^r p_i) = 1$; 
\item[(ii)]
 The splitting type of $f(x) ~(\bmod ~p_j)$
is $C_{\ssy_j}$, for $1 \le j \le r;$
\item[(iii)]
 $f(x)$ has splitting field $K_f$ that is an $S_n$-extension of $\QQ$.
\end{enumerate}
Then, setting $M= \prod_i p_i$, for $B \ge 4nM$ there holds 
$$
\mid \frac{\#\{f(x) \in \sF_n(B;  \{\sS, \SSY\})\}}{\#\{f(x) \in \sF_n(B; \sS)\}}- \prod_{i=1}^r \nu_{n, p_i}^{\ast}(C_{\ssy_i})
  \mid \,  \le \, 2\prod_{i=1}^r(1-\frac{1}{p_i})^{-1}\alpha_n \frac{\log B}{\sqrt{B}} +  \frac{4nM}{B},
$$
\end{thm}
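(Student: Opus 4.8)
The plan is to combine the exact finite-field count of Proposition \ref{pr33} with a lattice-point rounding argument (as in Lemma \ref{le43}) to control the splitting-type condition, and then to subtract off the bad set of non-$S_n$-polynomials using Gallagher's estimate (Theorem \ref{th41}). First I would fix $M = \prod_i p_i$ and observe that, by the Chinese Remainder Theorem, the splitting type of $f(x) \bmod p_i$ for each $i$ depends only on the residue class of the coefficient vector $(a_0,\dots,a_{n-1}) \bmod M$. Among the $M^n$ residue classes, Proposition \ref{pr33}(2) tells us that exactly $\prod_{i=1}^r N_{\ssy_i}(p_i)$ of them give a polynomial with splitting type $C_{\ssy_i}$ at every $p_i$ (and automatically satisfy the coprimality condition (i), since a square-free factorization type means nonzero discriminant mod $p_i$). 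So if $M \mid B$ the count of $f \in \sF_n(B)$ satisfying (i) and (ii) is exactly $(2B)^n M^{-n}\prod_i N_{\ssy_i}(p_i)$. For general $B$, I would sandwich between boxes of side $B' = M\lfloor B/M\rfloor$ and $B'' = M\lceil B/M\rceil$ exactly as in Lemma \ref{le43}, picking up an error of size at most $(2B)^n \cdot \tfrac{2nM}{B}$.

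Next I would divide by $\#\{f \in \sF_n(B;\sS)\}$ using the expansion \eqref{334a} from Lemma \ref{le43}, namely $\#\{f \in \sF_n(B;\sS)\} = (1+\epsilon_n(B;\sS))(2B)^n\prod_i(1-\tfrac1{p_i})$ with $|\epsilon_n(B;\sS)| \le \tfrac{2nM}{B}$. Combined with the identity $N_{\ssy_i}(p_i) = p_i^{\,n-1}(p_i-1)\,\nu_{n,p_i}^{\ast}(C_{\ssy_i})$ and $\prod_i p_i^{-n}\prod_i p_i^{\,n-1}(p_i-1) = \prod_i(1-\tfrac1{p_i})$, the main terms cancel, and one is left with a ratio of the form $\dfrac{\prod_i(1-1/p_i)\nu_{n,p_i}^{\ast}(C_{\ssy_i}) + O(nM/B)}{(1+\epsilon_n(B;\sS))\prod_i(1-1/p_i)}$; expanding $(1+\epsilon)^{-1}$ and using $\nu_{n,p_i}^{\ast}(C_{\ssy_i}) \le 1$ gives a difference from $\prod_i\nu_{n,p_i}^{\ast}(C_{\ssy_i})$ bounded by a constant times $\tfrac{nM}{B}\prod_i(1-\tfrac1{p_i})^{-1}$, which I would absorb into the stated $\tfrac{4nM}{B}$ term (this is where the hypothesis $B \ge 4nM$ and the choice of constants is used to keep the geometric-series error under control).

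Finally I would handle condition (iii). Let $\sF_n(B;\{\sS,\SSY\})$ be the set satisfying (i),(ii),(iii) and let $\widetilde{\sF}$ be the set satisfying just (i),(ii). Then $0 \le \#\widetilde{\sF} - \#\sF_n(B;\{\sS,\SSY\}) \le E_n(B)$, the number of degree-$n$ monic polynomials in the box whose splitting field is not $S_n$. By Gallagher's Theorem \ref{th41}, $E_n(B) \le \alpha_n (2B)^n \tfrac{\log B}{\sqrt B}$. Dividing by $\#\{f \in \sF_n(B;\sS)\} = (1+\epsilon_n(B;\sS))(2B)^n\prod_i(1-\tfrac1{p_i}) \ge \tfrac12 (2B)^n\prod_i(1-\tfrac1{p_i})$ (valid for $B \ge 4nM$), this contributes at most $2\prod_i(1-\tfrac1{p_i})^{-1}\alpha_n\tfrac{\log B}{\sqrt B}$ to the bound. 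Adding the two error contributions gives the claimed inequality. The main obstacle is purely bookkeeping: tracking the several multiplicative error factors ($1+\epsilon_n$, the rounding error, the $(1-1/p_i)^{-1}$ normalizations) and verifying that the threshold $B \ge 4nM$ indeed makes all the geometric-series estimates and the $\ge \tfrac12$ lower bound valid so that everything collapses into the two clean terms $2\prod_i(1-\tfrac1{p_i})^{-1}\alpha_n\tfrac{\log B}{\sqrt B}$ and $\tfrac{4nM}{B}$; there is no conceptual difficulty beyond the inputs already established.
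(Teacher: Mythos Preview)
Your approach is essentially identical to the paper's: introduce the auxiliary set $\widetilde{\sF}$ (the paper writes $\sF_n(B;\{\sS,\SSY\})^{+}$) satisfying (i) and (ii), count it via CRT and Proposition~\ref{pr33}(2) with a box-rounding argument, remove the non-$S_n$ polynomials via Gallagher's Theorem~\ref{th41}, and then divide by $|\sF_n(B;\sS)|$ using the estimate \eqref{334a}.

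There is one bookkeeping slip you should fix. You write the rounding error for $|\widetilde{\sF}|$ as an additive $(2B)^n\cdot\tfrac{2nM}{B}$, and this leads you to an error term $\tfrac{nM}{B}\prod_i(1-\tfrac1{p_i})^{-1}$ after division, which you then claim to absorb into $\tfrac{4nM}{B}$. That absorption is not valid: $\prod_i(1-\tfrac1{p_i})^{-1}$ is not bounded by any absolute constant. The fix is to record the rounding error multiplicatively, exactly as in \eqref{334a}: the sandwich $(2B')^nQ \le |\widetilde{\sF}| \le (2B'')^nQ$ with $Q=\prod_i p_i^{-n}N_{\ssy_i}(p_i)=\prod_i(1-\tfrac1{p_i})\nu_{n,p_i}^{\ast}(C_{\ssy_i})$ gives
\[
|\widetilde{\sF}| = (1+\epsilon')(2B)^n\prod_i(1-\tfrac1{p_i})\,\nu_{n,p_i}^{\ast}(C_{\ssy_i}),\qquad |\epsilon'|\le \tfrac{2nM}{B}.
\]
Now the factor $\prod_i(1-\tfrac1{p_i})$ is already present in the numerator and cancels against the same factor in $|\sF_n(B;\sS)|$, yielding the clean $\tfrac{4nM}{B}$ contribution without any $\prod_i(1-\tfrac1{p_i})^{-1}$. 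This is precisely how the paper handles it, and with this correction your argument goes through as stated.
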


\begin{proof}
Let $\sF_n(B, \{ \sS, \SSY\})^{+}$ denote the set of all polynomials $f(x)$ in $\sF_n(B)$
that satisfy conditions (i) and (ii) above. Theorem~\ref{th41} then gives
$$
0 \le  |\sF_n(B, \{ \sS, \SSY\})^{+}| \, - \, |\sF_n(B, \{ \sS, \SSY\})| \le (2B)^n \Big(\alpha_n \frac{\log B}{\sqrt{B}} \Big).
$$
For splitting types on  box of side $B' = M \left\lfloor \frac{B}{M} \right\rfloor$ by reduction $(\bmod~ M)$ 
together with Proposition ~\ref{pr33}  (2) and the 
Chinese remainder theorem  we get a product distribution of all splitting types $(\bmod~p_i)$ for $1 \le i \le r$,
$$
 |\sF_n(B', \{ \sS, \SSY\})^{+}| = (2B')^n \prod_{i=1}^n \frac{1}{p_{i}^n} N_{\ssy_i} (p_i^{}),
$$
where $N_{\ssy_i}(\cdot)$ is a cycle polynomial.
We have a similar formula  for an enclosing box of side $B'' = M \lceil \frac{B}{M} \rceil,$ with $(2B'')^n$ replacing $(2B')^n$.
Assuming  $B \ge 2nM$
 we obtain by an application of  (\ref{333a}) that
$$
  |\sF_n(B, \{ \sS, \SSY\})^{+}| = (1+ \epsilon_n(B; \{\sS, \SSY\})) (2B)^n\prod_{i=1}^n \frac{1}{p_{i}^n} N_{\ssy_i} (p_i^{}),
$$
with the error estimate
$$
|\epsilon_n(B, \{\sS, \SSY\})| \le \frac{2nM}{B}.
$$
Next we note that 
$\frac{1}{q^n} N_{\ssy}(q)=\left(1- \frac{1}{q}\right) \nu_{n, q}^{\ast}(C_{\ssy}).$
Substituting this for each $p_i$ in the formula above and using our original bound for $|\sF_n(B, \{ \sS, \SSY\})|$ yields
$$
\left| |\sF_n(B, \{ \sS, \SSY\})| \, - \, (2B)^n \prod_{i=1}^n \Big(1- \frac{1}{p_i}\Big) \nu_{n, p_i}^{\ast}( C_{\ssy_i})\right|
\le (2B)^n \Big( \frac{2nM}{B} \prod_{i=1}^n \Big(1- \frac{1}{p_i}\Big) + \alpha_n \frac{\log B}{\sqrt{B}} \Big).
$$
For $B \ge 2nM$, we replace $(2B)^n\prod_i(1- \frac{1}{p_i})$ with $|\sF_n(B; \sS)|$ using (\ref{334a}) 
we obtain 
$$
\left| | \sF_n(B; \{ \sS, \SSY\})| - |\sF_n(B; \sS)| \prod_{i=1}^r  \nu_{n, p_i}^{\ast}(C_{\ssy_i})\right|
 \le (2B)^n \Big( \frac{4nM}{B} \prod_{i=1}^n \Big(1- \frac{1}{p_i}\Big)
 + \alpha_n \frac{\log B}{\sqrt{B}}\Big)
$$
The result follows on dividing both sides by  
$$ |\sF_n(B; \sS)|= (1-\epsilon_n(B; \sS)(2B)^n\prod_{i=1}^n \Big(1- \frac{1}{p_i}\Big),$$
noting for $B \ge 4nM$ that (\ref{334a})  implies
$|\epsilon_n(B: \sS)| \le \frac{1}{2}.$
\end{proof} 
%***************************************
 % Proof of Theorem 5.2
 %**************************************

\noindent{\em Proof of Theorem~\ref{th42}.} 
This follows directly from Lemma~\ref{le43} and Theorem~\ref{th44} on
letting $B \to \infty.$
$~~~\Box$\\

\begin{remark}
The conclusion in Theorem \ref{th42} is  insensitive to the shape of the box bounding the coefficients
as long as the box increases homothetically as $B \to \infty$, e.g. one can use $-c_{n, j} B < a_j < c_{n, j} B$, where
$c_{n, j}$ are positive constants independent of $B$, and derive exactly
the same limiting formula. For example, $c_{n, j} = \left(  {{n}\atop{j}} \right)$ is
another natural choice.

%%%%%%%%%%%%%%%%%%%%%%%%%%%%%%%%%%%%%%%%%%%%%%%%%%%%%%
%Such a change of box shape can affect the distribution of the number of 
%real roots of the polynomial, see Edelman and Kostlan \cite{EK95}.
%(The fraction of real roots of  random degree $=n$ polynomial in most probability models goes rapidly to $0$ as $n \to \infty$.)
%%%%%%%%%%%%%%%%%%%%%%%%%%%%%%%%%%%%%%%%%%%%%%%%%%%%%%
\end{remark}

%#######################################################################
%Subection 5.3: 
%#######################################################################

\subsection{Existence of $S_n$-number fields with specified splitting types: Proof of Theorem \ref{th13}.}\label{sec43}

We first remark  on a special property of  the symmetric group $S_n$
 as a Galois group, represented as a  permutation group acting transitively on the
 roots of a polynomial, that distinguishes  it from  some of  its subgroups. 
  Let $G$ be a 
permutation group $G \subset S_n$ (i.e. a permutation representation
of the abstract group $G$).  The elements in a
conjugacy class in $G$ necessarily have  the same cycle type as permutations, but the converse
need not hold. That is,  the cycle type of  a conjugacy class in $G$ 
need not determine it uniquely. For example, 
the group $G= \{ (12) (34), (13)(24), (14)(23), (1)(2)(3)(4)\} \subset S_4$ is abelian
so all conjugacy classes have size $1$ but three of these classes have identical cycle
structures. 
This uniqueness property does hold for cycle types for the full symmetric group $S_n$,
which has the consequence that the  cycle type  of an $S_n$ polynomial
having a  square-free factorization $(\bmod~p)$ uniquely determines the Artin symbol
for an $S_n$-number field obtained by adjoining one root of it. \medskip

%***************************************
 % Proof of Theorem  2.3 (formerly1.3
 %**************************************
\noindent{\em Proof of Theorem~\ref{th13}.} 
(1) $\Rightarrow$ (3).
By hypothesis we are given  $\sS= \{ p_1, ..., p_r\}$ and splitting types $\SSY=\{ \ssy_1, .., \ssy_r\}$  with
the property that all $\nu_{n, p_i}^{\ast}(C_{\ssy_i}) >0$. We will show the number of
$S_n$-number fields $K$ whose Galois closure $K'$ has the given Artin symbols
\begin{equation}\label{351}
\Big[ \frac{K'/\QQ}{(p_i)}\Big] = C_{\ssy_i}\,, ~~~~~~1 \le i \le r
\end{equation}
is infinite, by showing it is arbitrarily large. Since the splitting type of a polynomial
$f(X)$ generating an $S_n$-number field modulo $p$ determines the corresponding
Artin symbol, it suffices to specify factorizations of polynomials $(\bmod ~p_i)$
 which we can do using  Theorem ~\ref{th42}. 

Given $k \ge 1$ we choose
$\sS_k :=  \sS \bigcup \sS_k^{*}$ with $\sS_k^{*} =\{ p_{r+1}, \cdots, p_{r+k}\}$ being a set
of $k$ auxiliary primes that satisfy
 $n\le p_{r+1} < p_{r+2} < ...< p_{r+k}$ and disjoint from the primes in $\sS$. 
In that case we may  choose splitting symbols $\SSY_k := \{ \ssy_{r+1}, ..., \ssy_{r+k}\}$ 
arbitrarily in $S_n$ for the auxiliary primes and the condition $\nu_{n, p_{r+j}}^{*}(C_{\ssy_{r+j}})>0$
will automatically hold  by 
Theorem ~\ref{th20a}\, (2).  
The square-free condition on the
 polynomial modulo each $p_i$ guarantees that the polynomial discriminant is relatively prime to 
 $p_1p_2 \cdots p_{r+k}$ and this property guarantees that (P1) holds.
 Theorem~\ref{th42}
now implies the existence of infinitely many $S_n$-polynomials 
having the given splitting behavior at all $r+k$ primes; thus (P2) holds
for such fields. In particular there exists at 
least one such $S_n$-number field $K$ exhibiting the given splitting behavior.
Since each $S_n$ for $n \ge 2$ has  at least  two  distinct conjugacy classes,
we obtain in this way at least $2^k$ different $S_n$-number fields, all of which
match the splitting types $C_{\ssy_i}$  for $1 \le i \le r$ in (\ref{351}) and which
are distinguishable among themselves by how  the auxiliary primes $p_{r+j}$, $1 \le j \le k$
split.  Since $k$ can be arbitrarily large, the result follows.

(3) $\Rightarrow$ (2). Immediate.

(2) $\Rightarrow$ (1). By hypothesis the  given field $K$ possesses  is a monogenic order 
$\ZZ[1, \theta, ..., \theta^{n-1}]$ satisfying (P1). 
The minimal polynomial for $\theta$ is then a monic polynomial
$f(x) \in \ZZ[X]$ which satisfies $\gcd (Disc(f), p_1\cdots p_r) =1$.  This polynomial then  has square-free
factorization $(\bmod \, p_i)$ yielding the splitting types $C_{\ssy_i}$ for $1 \le i \le r$, see\footnote{The
hypothesis of Lang's Proposition 25 requires $\ZZ[1, \theta, \cdots,\theta^{n-1}]$ to be integrally closed,
i.e. the full ring of integers $O_{\kk}$. As he notes, the argument can be done by localizing over each prime ideal
$(p_i)$,  and here $(Disc(f), p_i)=1$ implies that the integral closure condition holds locally.}
Lang \cite[I. \S8, Proposition 25]{Lang:1994}.
We next observe that the  splitting type conditions are congruence conditions $(\bmod\, p_1\cdots p_n)$ on
the coefficients of $f$, and they enforce the condition
$\gcd (Disc(f), p_1 p_2 \cdots p_r) =1$.
These congruence conditions are satisfied for a positive proportion of polynomials in the box,
so in Theorem \ref{th42} the left side of \eqref{312} is positive, which certifies that
each $\nu_{n, p_i}^{\ast}(C_{\ssy_i}) >0$.
$~~~\Box$\medskip

%#######################################################################
%Subection 5.4: 
%#######################################################################

\subsection{ Vanishing values of splitting measures: Proof of Theorem \ref{th25}}\label{sec44}

We characterize pairs $(n, p, \ssy)$ where $\nu_{n, p}^{\ast}(C_{\ssy}) =0$.

\begin{proof}[Proof of Theorem \ref{th25}]
$ (C1) \Leftrightarrow (C2)$. Since $p \ne 0, 1$ the  condition $\nu_{n, p}^{\ast}(C_{\ssy}) =0$ holds
if and only if $N_{\ssy}(p)=0.$ By Proposition \ref{pr33} (2) the latter condition holds if any only if no degree $n$ 
monic polynomial
in $\FF_p[X]$ with $Disc(f) \ne 0 \in \FF_p$ has a square-free factorization of splitting type $\ssy$.
The latter condition is exactly (C2).

$ (C1) \Leftrightarrow (C3)$. We establish the contrapositive. Suppose (C3) does not hold.
This says that   there exists a $S_n$-number field $K$ which at $(p)$ is unramified and has splitting type $\ssy$.
Now the equivalence of Theorem ~\ref{th25} applied for a single prime $p_1=p$
shows that $\nu_{n, p}^{\ast} (\ssy) >0,$ which is equivalent to the condition that  $(C1)$ does not hold. 

We remark that this argument does not establish whether or not
there exist any $S_n$ extensions  $K$ which satisfy condition (C3) for  given splitting data $\ssy$.
\end{proof}

%#######################################################################
%SECTION 6: General Number Fields
%#######################################################################

\section{Number of $S_n$-Polynomials with Specified Splitting Types over Number Fields}\label{sec5}

We consider polynomials with coefficients drawn from an algebraic number field $\kk$,
not necessarily Galois over $\QQ$. We set $[\kk:\QQ] =d$, and
say that  an extension $L/\kk$ with $[L:\kk]=n$ is a
{\em relative $S_n$-number field} if the Galois closure $L'$ of $L$ over $k$ has $Gal(L'/\kk) \simeq S_n$.
We let $D_{\kk}$ denote the absolute
discriminant of $\kk$ over $\QQ$.

Let $\sO_{\kk}$ denote the ring of algebraic integers in $\kk$.
We consider monic polynomials
$$
f(x) = x^n + \sum_{j=0}^{n-1} \alpha_j x^j,
$$
with all $\alpha_j \in \sO_{\kk}.$
Choose an  integral basis 
$\sO_{\kk} = \ZZ[\omega_1, \omega_2, ..., \omega_d]$,
and let $\Omega=(\omega_1, ..., \omega_d)$ denote this (ordered) integral basis.
We now have
$$
\alpha_j = \sum_{k=1}^d m_{j,k} \omega_k, ~~~1 \le j \le n, 
$$
for unique $m_{i, j} \in \ZZ.$
We define $\sF_n(B; \Omega)$ to be the set of all monic degree $n$ polynomials over $\sO_{\kk}$ whose
coefficients have all $m_{i,j}$ satisfying
$-B+1 \le m_{i,j} \le B$, so there are $(2B)^{nd}$ polynomials in the box.

Next we let $\sS = \{ \fp_1, ..., \fp_r\}$ denote a finite ordered set of (distinct) prime ideals in $\sO_{\kk}$.
We allow different ideals in the list to have residue class fields of the same characteristic,
i.e. to lie over the same rational prime. We set $N_{\kk/\QQ} \fp_j = p_j^{f_j}.$
We let  $\SSY = \{ \ssy_1, ..., \ssy_r\}$ denote a finite ordered set of splitting types of $S_n$
(the different $\ssy_j$ need not be distinct).\smallskip

%%%%%%%%%%%%%%%%%%%%%%%%%%%%%%%%%%%%%%%%%%%%%%%%%%
%We define $\sF_n(B; \sS, \Omega)$ to denote the subset of polynomials in $\sF_n(b; \Omega)$
%that have the relative discriminant $Disc(f)$ over $\kk$ of the polynomial $f(x)$,  defined by
%$$Disc(f) = \prod_{1 \le i < j \le n} (\theta_i - \theta_j)^2,$$
%be relatively prime to all ideals dividing $M:=\prod_{i} p_i$.
%%%%%%%%%%%%%%%%%%%%%%%%%%%%%%%%%%%%%%%%%%%%%%%%%%

%****************
% Theorem 6.1
 
%***************

\begin{thm}~\label{th51}
Suppose that $\kk/\QQ$ is a number field, not necessarily Galois over $\QQ$, 
Let $\sS= \{ \fp_1, ..., \fp_r\}$ be an ordered finite set of distinct prime ideals in 
 $\sO_\kk$ and let $\FF_{q_i}$ denote the residue class field for $\fp_i$,
 with $q_i = N\fp_i = p_i^{f_i}$.  Suppose   $\SSY=\{\ssy_1, ..., \ssy_r\}$ is a given ordered set of splitting symbols. 
Then for fixed $n \ge 2$, the following hold.

(1) Let $\sF_n(B; \sS, \Omega)$ denote  the set of all  degree $n$ polynomials $f(x)$ in $\sF_n(B; \Omega)$
such that  $\gcd(Disc(f), \prod_{i=1}^r \fp_i) = (1),$
viewed as ideals in $\sO_{\kk}$. Then
\begin{equation}\label{411}
\lim_{B \to \infty}   \frac{\#\{ f(x) \in \sF_n(B; \sS,\Omega)\}}{\#\{ f(x) \in \sF_n(B; \Omega)\}}= \prod_{i=1}^r \Big(1- \frac{1}{q_i}\Big).
\end{equation}

(2) Let $\sF_n(B: \{ \sS; \SSY\}, \Omega)$ denote the set of all  $f(x)$ in $\sF_n(B;\sS, \Omega)$
such that:  
\begin{enumerate}
\item[(i)]
 The splitting type of $f(x) ~(\bmod ~\fp_i)$
is $C_{\ssy_i}$  for $1 \le i \le r.$
\item[(ii)]
 $f(x)$ has relative splitting field $K_f$ over $\kk$ that is an $S_n$-extension of $\kk$.

\end{enumerate}
Then
\begin{equation}\label{412}
\lim_{ B \to \infty}
 \frac{\#\{ f(x) \in \sF_n(B; \{\sS, \SSY\}), \Omega\}}{\#\{ f(x) \in \sF_n(B; \sS, \Omega)\}}
 =  \prod_{i=1}^r \nu_{n, q_i}^{*}(C_{\ssy_i}).
\end{equation}
\end{thm}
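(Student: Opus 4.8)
The plan is to run the argument of Theorems~\ref{th42} and~\ref{th44} almost verbatim, making two substitutions: the coefficient reduction map $\ZZ^n\to\FF_p^n$ is replaced by the $\ZZ$-linear map $\ZZ^{nd}\to\FF_{q_i}^n$ induced by the composite $\sO_\kk\twoheadrightarrow\sO_\kk/\fp_i\cong\FF_{q_i}$ once the integral basis $\Omega$ is fixed, and Gallagher's effective van der Waerden theorem (Theorem~\ref{th41}) is replaced by S.~D.~Cohen's effective Hilbert irreducibility theorem over the number field $\kk$~\cite{Cohen:1981}. First I would set up the reduction maps. The basis $\Omega$ identifies the monic degree $n$ polynomials over $\sO_\kk$ with $\ZZ^{nd}$ through the coefficient integers $(m_{j,k})$, and for each $i$ reduction modulo $\fp_i$ (together with a fixed isomorphism $\sO_\kk/\fp_i\cong\FF_{q_i}$) gives a $\ZZ$-linear surjection $\ZZ^{nd}\to\FF_{q_i}^n$ sending $f$ to the coefficient vector of $f\bmod\fp_i$; its kernel $\Lambda_i$ is a sublattice of index $q_i^n$. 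Distinct prime ideals are comaximal, so by the Chinese Remainder Theorem the joint map $\ZZ^{nd}\to\prod_{i=1}^r\FF_{q_i}^n$ is surjective with kernel $\Lambda:=\bigcap_{i=1}^r\Lambda_i$ of index $\prod_{i=1}^r q_i^n$; and since the rational prime $p_i$ lying below $\fp_i$ belongs to $\fp_i$, the integer $N:=\prod_{p\mid p_1\cdots p_r}p$ lies in every $\fp_i$, so $N\ZZ^{nd}\subseteq\Lambda$ and the reductions $f\bmod\fp_i$ depend only on $(m_{j,k})\bmod N$.

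Next I would prove equidistribution of these reductions by the rounding argument of Lemma~\ref{le43}: since $N\ZZ^{nd}\subseteq\Lambda$, inside any box whose side is a multiple of $N$ each coset of $\Lambda$ is represented exactly $N^{nd}/\prod_i q_i^n$ times, and rounding $[-B+1,B]^{nd}$ down to side $N\lfloor B/N\rfloor$ and up to side $N\lceil B/N\rceil$, with the error controlled by the inequality used in~\eqref{333a} (now with $nd$ in place of $n$), gives the following: for any prescribed subsets $S_i\subseteq\FF_{q_i}[x]$ of monic degree $n$ polynomials, the number of $f\in\sF_n(B;\Omega)$ with $f\bmod\fp_i\in S_i$ for every $i$ equals $\bigl(1+O_{n,\kk,\sS}(1/B)\bigr)(2B)^{nd}\prod_{i=1}^r(\#S_i)\,q_i^{-n}$. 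For part~(1) I would take $S_i=\{\bar f:\Disc(\bar f)\ne0\}$, so $\#S_i=q_i^n-q_i^{n-1}$ by Proposition~\ref{pr33}(1), yielding $\#\sF_n(B;\sS,\Omega)=\bigl(1+O(1/B)\bigr)(2B)^{nd}\prod_i(1-1/q_i)$ and hence~\eqref{411} on letting $B\to\infty$. For part~(2) I would take $S_i$ to be the set of $\bar f$ with square-free factorization of splitting type $\ssy_i$, so $\#S_i=N_{\ssy_i}(q_i)$, the cycle polynomial of~\eqref{231}, by Proposition~\ref{pr33}(2); combining this with the identity $q^{-n}N_\ssy(q)=(1-q^{-1})\,\nu_{n,q}^{\ast}(C_\ssy)$, immediate from~\eqref{spl-def}, shows that the number of $f\in\sF_n(B;\Omega)$ satisfying condition~(i) together with $\gcd(\Disc(f),\prod_i\fp_i)=(1)$, but not yet condition~(ii), equals $\bigl(1+O(1/B)\bigr)(2B)^{nd}\prod_{i=1}^r(1-1/q_i)\,\nu_{n,q_i}^{\ast}(C_{\ssy_i})$.

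Finally I would remove the polynomials failing condition~(ii): by Cohen's quantitative Hilbert irreducibility theorem over $\kk$~\cite{Cohen:1981}, the number of $f\in\sF_n(B;\Omega)$ whose splitting field over $\kk$ is not a relative $S_n$-extension is $o\bigl((2B)^{nd}\bigr)$, hence negligible against the main term. Subtracting this from the count of the previous paragraph and dividing by $\#\sF_n(B;\sS,\Omega)\sim(2B)^{nd}\prod_i(1-1/q_i)$ from part~(1), the factors $\prod_i(1-1/q_i)$ cancel and $B\to\infty$ gives~\eqref{412}. The hard part---indeed the only genuinely new ingredient---is this last step: one must extract from~\cite{Cohen:1981} the effective Hilbert irreducibility statement for monic polynomials over $\sO_\kk$ with coefficients in a box expanding homothetically relative to a fixed integral basis, with an error term that is unconditionally $o\bigl((2B)^{nd}\bigr)$, so that it plays exactly the role that Theorem~\ref{th41} plays over $\QQ$; everything else is the lattice bookkeeping of Lemma~\ref{le43} and Theorem~\ref{th44} with $\FF_p$ replaced by $\FF_{q_i}$ and the exponent $n$ by $nd$. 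As in the remark following Theorem~\ref{th42}, the limiting densities are insensitive to the shape of the box in the $m_{j,k}$ provided it grows homothetically; and since Theorem~\ref{th51} speaks only of the reduction $f\bmod\fp_i$, no relative analogue of Dedekind's factorization theorem is needed here---that enters only in the field-theoretic refinements stated as Theorems~\ref{th52} and~\ref{th53}.
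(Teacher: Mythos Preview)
Your proposal is correct and follows essentially the same route as the paper's own proof: reduce modulo each $\fp_i$ using the integral basis $\Omega$, combine via the Chinese Remainder Theorem, use Proposition~\ref{pr33} over each residue field $\FF_{q_i}$, handle general $B$ by the rounding argument of Lemma~\ref{le43} (with the exponent $n$ replaced by $nd$), and invoke Cohen~\cite{Cohen:1981} in place of Gallagher's Theorem~\ref{th41} to discard the non-$S_n$ polynomials. The paper is terser about the lattice bookkeeping and uses the modulus $M=\prod_i q_i$ rather than your smaller $N=\prod_{p\mid p_1\cdots p_r}p$, and it quotes the explicit Cohen bound $\alpha_n(\kk)(2B)^{nd}\,d\log B/\sqrt{B^d}$ (from \cite[Theorem~2.1]{Cohen:1981} applied to $F_{\mathbf t}(X)=X^n+\sum_{i=0}^{n-1}\mathbf t_iX^i$) rather than your $o((2B)^{nd})$, but these are cosmetic differences.
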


\begin{proof}
This result parallels the proof of Theorem ~\ref{th42}.  We only
sketch the details, indicating the main changes needed. Suppose $[k: \QQ] = d$.

Firstly, we have 
$$\#\{ f(x) \in \sF_n(B; \Omega)\} = (2B)^{nd}.$$
The condition for the polynomial discriminant $\gcd(Disc(f), \prod_{i=1}^r \fp_i) = (1)$
is exactly that the polynomial $f(x)$ have square-free factorization $(\bmod~\fp_i)$
for $1 \le i \le r$.
Set $M= \prod_{i=1}^r q_i = \prod_{i=1}^r (p_i)^{f_i}.$
For the limit in (1) we obtain  an exact count when going through boxes having all sides
$B=Mm$ for some integer $m \ge 1$, which is
$$
| \sF_n(B; \sS, \Omega\} |= (2B)^{nd} \prod_{i=1}^r \Big(1- \frac{1}{N\fp_i}\Big) =(2B)^{nd} \prod_{i=1}^r \Big(1- \frac{1}{q_i}\Big)
$$
For each prime ideal $\fp_i$ this holds using Proposition \ref{pr33} (1)
since we have an integral multiple of complete residue
systems $(\bmod~\fp_i)$ in the box,   and it holds for all $\fp_i$ simultaneously
using the Chinese remainder theorem for the box. Allowing a general $B$ adjusts this formula by
a multiplicative  amount $1 + O(\frac{ndM}{B})$, and letting $B \to \infty$ yields (\ref{411}).

Secondly, we introduce $\sF_{n}(B; \{ \sS, \SSY\}, \Omega)^{+}$ to be those elements of
$\sF_{n}(B; \sS, \Omega)$ that satisfy condition (i) only. We then have a bound
for the number of these $f(x)$ that do not give $S_n$-extensions of $\kk$, which is
$$
0 \le \sF_{n}(B; \{ \sS, \SSY\}, \Omega)^{+} - \sF_{n}(B; \{ \sS, \SSY\}, \Omega) \le   \alpha_n(\kk) (2B)^{nd} \frac{d\log B}{\sqrt{B^d}}.
$$
This result follows using an upper bound of Cohen \cite[Theorem 2.1]{Cohen:1981}, 
 in his result specifying  that $F_{\bf t}(x) = X^n + \sum_{i=0}^{n-1} {\bf t}_i X^i$, 
that  $K=k$, and noting the Galois group $G= S_n$ for $F_{\bf t}(X)$ over the function field
$k({\bf t}_1, \cdots, {\bf t}_n).$
%%%%%%%%%%%%%%%%%%%%%%%%%%%%%%%%%%%
% In Cohen(1981) , Theorem 2, also take s=n, and take $N=B^d$.
%%%%%%%%%%%%%%%%%%%%%%%%%%%%%%%%%%%%%

%%%%%%%%%%%%%%%%%%%%%%%%%%%%%%%%%%%%%%%%%%%%%%%%%%%%
% In earlier work Cohen%\cite[Theorem 1]{Cohen:1979}, had  a result, but with a different height notion 
%as stated in \cite{Cohen:1979}
%The set of allowed  polynomials in \cite{Cohen:1979} 
%does not  correspond to polynomials in a box, but 
%in  a later  paper Cohen \cite[p.228]
%\cite[Theorem 2.1[{Cohen:1981}  uses  a notion of height using polynomials
%in a box and notes the effect of this change  is on the estimate 
%is simply to change the constant $\alpha_n(\kk)$, multiplying it  by a constant (depending
%on the field $\kk$).
%%%%%%%%%%%%%%%%%%%%%%%%%%%%%%%%%%%%%%%%%%%%%%%%%%%%%

Thirdly, on restricting
the box size to the special form $B=Mm$ with $m \ge 1$, one gets an exact count
$$
| \sF_n(B; \{\sS, \SSY\}, \Omega\} |= (2B)^{nd} \prod_{i=1}^r \frac{1}{ (N\fp_i)^n} N_{\ssy_i}( N\fp_i).
$$
This formula  is equivalent to 
$$
| \sF_n(B; \{\sS, \SSY\}, \Omega\} |= 
(2B)^{nd} \prod_{i=1}^r \Big(1- \frac{1}{q_i} \Big) \nu_{n, q_i}^{\ast}(C_{\ssy_i}).
$$
Changing the box size to  an arbitrary integer $B$ introduces at most a multiplicative roundoff error of
$1+ O(\frac{ndM}{B}).$ 

Fourthly, we  combine  the above estimates to obtain an analogue of Theorem~\ref{th44}, stating that
$$
\mid \frac{ | \sF_n(B;  \{\sS, \SSY\}, \Omega)|}{|  \sF_n(B; \sS, \Omega) |}
-  \prod_{i=1}^r \nu_{n, q_i}^{\ast}(C_{\ssy_i})
  \mid \,  \le \, 2\prod_{i=1}^r(1-\frac{1}{q_i})^{-1}\alpha_n(\kk)  \frac{d\log B}{\sqrt{B^d}} +  \frac{4ndM}{B}.
$$
The formula (\ref{412}) follows on letting $B \to \infty.$
\end{proof}

\begin{remark}
The conclusion in Theorem \ref{th51} is  insensitive to the shape of the box
 bounding the coefficients
as long as it is increased  homothetically as $B \to \infty$, e.g. $-c_{n, j} B < a_j < c_{n, j} B$, where
$c_{n, j}$ are positive constants independent of $B$.
\end{remark}

We next  obtain a result parallel to Theorem ~\ref{th13} on the existence of relative $S_n$-number fields $K$ over $\kk$
having prescribed splitting above a given finite set of prime ideals $\sS= \{ \fp_i: 1 \le i \le r\}$, 
and setting $ N_{\kk/\QQ} \fp_i= (q_i)$, provided  that all
the quantities $\nu_{n, q_i}^{\ast}(C_{\ssy_i})>0$. We follow
the convention that a {\em  relative $S_n$-number field $K$ over $\kk$} is a degree $n$ extension of
$\kk$ whose Galois closure over $\kk$ has Galois group $S_n$. 
We recall  that the {\em (relative) discriminant} $Disc(O_K \mid O_{\kk})$ of
any order $O$ of $K$ that contains $O_{\kk}$ 
is that ideal of $O_{\kk}$ that is generated by the
discriminants $(\alpha_1, ..., \alpha_n)$ of all the bases of $K/\kk$ which are contained in $O$.
\cite[III (2.8)]{Neukirch:1999}. The prime ideal powers dividing the relative discriminant can be
computed locally \cite[Prop. 5.7, p. 219]{Narkiewicz:1990}.\\

%****************
% Theorem 6.2
%***************
\begin{thm}~\label{th52}
Let $\kk /\QQ$ be a number field, not necessarily Galois over $\QQ$.
Let $\sS = \{\fp_1, ..., \fp_r\}$ denote a finite set of prime ideals of $\kk$.
 and let
$\SSY =\{ \ssy_1, ..., \ssy_r\}$ with $\ssy_j \vdash n$ be a prescribed set of splitting symbols
for these prime ideals.  Set $q_i = N_{k/\QQ} \fp_i$. Then the following conditions are equivalent.

\begin{enumerate}
\item[(1)]
The positive measure condition
$$
\nu_{n, q_i}^{\ast}(C_{\ssy_i}) >0  \,\, \mbox{for}\, \, 1 \le i \le r
$$
holds.

\item[(2)]  There exists a relative
$S_n$-number field $K/ \kk$  having the following two properties:

\begin{enumerate}
\item[(P1-$\kk$)] The field $K$ contains a monogenic order $O= O_{\kk}[1, \theta, ...\theta^{n-1}]$
whose relative discriminant $Disc(O |O_{\kk})$ is relatively prime to $\fp_1\fp_2 \cdots \fp_r.$
\item[(P2-$\kk$)]
The  Galois closure $K^{spl}$ of $K$ over $\kk$
  is unramified at all prime ideals  above those in $\sS$ and  the primes 
 in $\sS$ have  prescribed Artin symbols
$$
\Big[ \frac{K^{spl}/\kk}{(\fp_i)}\Big] = C_{\ssy_i}\,, ~~~~~~1 \le i \le r.
$$
\end{enumerate}
\item[(3)] There exist infinitely many relative $S_n$-number fields $K$ over $\kk$ having
properties {\rm(P1-$\kk$)} and  {\rm (P2-$\kk$)}.
\end{enumerate}
\end{thm}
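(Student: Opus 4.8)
The plan is to follow the proof of Theorem~\ref{th13} in structure, substituting Theorem~\ref{th51} for Theorem~\ref{th42}, prime ideals $\fp_i$ of $\sO_\kk$ for the rational primes $p_i$, and the residue field sizes $q_i = N_{\kk/\QQ}\fp_i$ for the parameters governing the splitting measures. I would establish the cyclic implications $(1)\Rightarrow(3)\Rightarrow(2)\Rightarrow(1)$.

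For $(1)\Rightarrow(3)$: assume all $\nu_{n,q_i}^{\ast}(C_{\ssy_i})>0$. For each $k\ge 1$, adjoin to $\sS$ a set $\sS_k^{\ast}=\{\fp_{r+1},\dots,\fp_{r+k}\}$ of auxiliary prime ideals of $\sO_\kk$, disjoint from $\sS$, each of norm $q_{r+j}=N\fp_{r+j}\ge n$; such ideals exist in abundance because only finitely many prime ideals of $\sO_\kk$ have norm below any fixed bound (one may, for instance, take prime ideals lying above distinct rational primes $\ge n$). By Theorem~\ref{th20a}(1) the measure $\nu_{n,q_{r+j}}^{\ast}$ then has full support on $S_n$, so \emph{any} choice of splitting symbols $\ssy_{r+j}$ at the auxiliary primes satisfies $\nu_{n,q_{r+j}}^{\ast}(C_{\ssy_{r+j}})>0$. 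Applying Theorem~\ref{th51}(2) to the enlarged data $\sS\cup\sS_k^{\ast}$ and $\SSY\cup\{\ssy_{r+1},\dots,\ssy_{r+k}\}$, the limiting ratio $\prod_j \nu_{n,q_j}^{\ast}(C_{\ssy_j})$ is strictly positive, so for all large $B$ there are polynomials $f(x)\in\sO_\kk[x]$ that generate relative $S_n$-extensions $K_f/\kk$ and have square-free factorization of type $C_{\ssy_i}$ modulo every prime in $\sS\cup\sS_k^{\ast}$. For such an $f$, the ideal $(\Disc(f))=\Disc(\sO_\kk[\theta]\mid\sO_\kk)$ is prime to $\prod_i\fp_i$, giving (P1-$\kk$) with $\theta$ a root of $f$; and since the relative discriminant of the field $K_f/\kk$ divides $(\Disc(f))$, the $\fp_i$ are unramified in $K_f$ and hence in its Galois closure over $\kk$, with Artin symbol at $\fp_i$ equal to the cycle type $C_{\ssy_i}$ of the factorization — here one uses that in $S_n$ the cycle type of a permutation determines its conjugacy class — so (P2-$\kk$) holds. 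Because each $S_n$ with $n\ge 2$ has at least two conjugacy classes, varying the splitting symbols over the $k$ auxiliary primes produces at least $2^k$ distinct fields $K_f$, all agreeing with the prescribed data over $\sS$; letting $k\to\infty$ yields infinitely many.

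The implication $(3)\Rightarrow(2)$ is trivial. For $(2)\Rightarrow(1)$: given the relative $S_n$-number field $K/\kk$ with monogenic order $O=\sO_\kk[1,\theta,\dots,\theta^{n-1}]$ whose relative discriminant is prime to $\fp_1\cdots\fp_r$, let $f(x)\in\sO_\kk[x]$ be the minimal polynomial of $\theta$ over $\kk$; then $(\Disc(f))=\Disc(O\mid\sO_\kk)$ is coprime to each $\fp_i$, so $f$ factors square-freely modulo $\fp_i$, with factorization type $C_{\ssy_i}$ by (P2-$\kk$). (The passage from the Artin symbol to the factorization type uses the local form of Lang~\cite[I.~\S8, Proposition~25]{Lang:1994}: localizing at $\fp_i$, coprimeness makes $O$ maximal there, so Dedekind's factorization theorem applies.) The resulting splitting-type conditions are congruences on the coefficient vectors $(m_{j,k})$ modulo $\prod_i q_i$, and they are nonvacuous since our $f$ satisfies them, so a positive proportion of the polynomials in the box of Theorem~\ref{th51} meet them (and automatically have discriminant prime to $\prod_i\fp_i$); hence the numerator on the left of \eqref{412} is bounded below by a positive multiple of $(2B)^{nd}$ for all large $B$, forcing the limit $\prod_i\nu_{n,q_i}^{\ast}(C_{\ssy_i})$ to be positive, and therefore each factor $\nu_{n,q_i}^{\ast}(C_{\ssy_i})$ to be positive.

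Essentially all the analytic and combinatorial content is already packaged in Theorems~\ref{th51} and~\ref{th20a}, so I do not expect a serious obstacle. The only genuinely new points are (a) the existence of infinitely many prime ideals of $\sO_\kk$ of norm $\ge n$ outside $\sS$, immediate from finiteness of ideals of bounded norm, and (b) replacing the rational-prime bookkeeping by relative discriminants: one must note that for a monogenic order $\sO_\kk[\theta]$ the relative discriminant ideal equals $(\Disc(f))$ and divides the relative discriminant of $K/\kk$, and then apply Dedekind's theorem one prime ideal at a time by localization. These are mild technicalities rather than real difficulties.
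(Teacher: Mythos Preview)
Your proposal is correct and follows essentially the same route as the paper: both argue the cycle $(1)\Rightarrow(3)\Rightarrow(2)\Rightarrow(1)$ by mirroring the proof of Theorem~\ref{th13}, replacing Theorem~\ref{th42} with Theorem~\ref{th51}, rational primes with prime ideals $\fp_i$, and using the identity $(\Disc(f))\sO_\kk=\Disc(\sO_\kk[\theta]\mid\sO_\kk)$ together with Theorem~\ref{th20a}(1) to handle the auxiliary primes of large norm. Your write-up is in fact somewhat more explicit than the paper's sketch (e.g., spelling out the localization needed for Dedekind's factorization theorem and the congruence-condition density argument), but the strategy is the same.
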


\begin{proof} The proof parallels that of Theorem \ref{th13}, using Theorem \ref{th51} in place
of Theorem \ref{th42}.
For  $(1) \Rightarrow (3)$  we use the  fact that for a monic polynomial $f(x) \in O_{\kk}[x]$
that is  irreducible over $O_{\kk}$ one has the equality of polynomial discriminants and
relative discriminants of the associated monogenic order in $K= \kk(\theta)$, for $\theta$ a root of $f(x)$.
That is, setting $O_f:= O_{\kk}[1, \theta, \cdots, \theta^{n-1}]$.
%%%%%%%%%%%%%%%%%%%%%%%%%%%%%%
% which is an order of $K$ under the irreducibility condition, 
%%%%%%%%%%%%%%%%%%%%%%%%%%%%%%
one has   the equality
\begin{equation}\label{eq-disc}
(Disc(f)) O_{\kk} = Disc [O_{f} \mid  O_{\kk}].
\end{equation}
of $O_{\kk}$-ideals; here  $(Disc(f))$ is a principal ideal.
We use this fact to show that (P1-$\kk$) is satisfied, and apply Theorem \ref{th51} to show  (P2-$\kk$)
is satisfied. 

For $(2) \Rightarrow (1)$ the hypothesis (P1-$\kk$) with the identity \eqref{eq-disc}
%%%%%%%%%%%%%%%%%%%%%%%%%%%
%of relative discriminant and polynomial discriminant 
%%%%%%%%%%%%%%%%%%%%%%%%%%%
implies $\fp_i \nmid (Disc(f))$ as an $O_{\kk}$-ideal and the
square-free factorization of $f(x)$ $(\bmod\, \fp_i)$ for each of the $\fp_i$.  This fact gives
the required Artin symbols $C_{\ssy_i}$, and positive density follows by Theorem \ref{th51} since all conditions
imposed are congruence conditions.
\end{proof}

To conclude the paper  we formulate a generalization of Theorem \ref{th25}. 
For  a relative extension $K/\kk$ of degree $n$  we say that a  prime ideal  $\fp$ of $O_{\kk}$ is called 
 an {\em essential relative
discriminant divisor} if it divides the relative discriminants $Disc(O| O_{\kk})$
all monogenic orders $O:=O_{\kk}]1, \theta, \cdots, \theta^{n-1}]$ of  the  field $K$ over $\kk$.

%%%%%%%
% Theorem 5.3
%%%%%%%%
\begin{thm}\label{th53}
 Let a  number field $\kk$ together with  a prime ideal $\fp$ be given.  Let $\fp$ have
  ideal norm $N_{\kk/\QQ} (\fp) = (q) = (p^k)$.
 For a set of  splitting types $\ssy \vdash n$,  
 with $n \ge 2$, the following three conditions are equivalent.
\begin{enumerate}
\item[(C1-$\kk$)] The splitting measure at $z=q=p^k$ has
$$
\nu_{n, q}^{\ast} ( C_\ssy) = 0.
$$
\item[(C2-$\kk$)]
 For all degree $n$ monic integer polynomials $f(x)$ with coefficients in $O_{\kk}$ whose $(\bmod \,\, \fp)$ factorization
has splitting type $\ssy$,  the relative discriminant $ Disc(O_{f}| O_{\kk})$ is divisible by $\fp$.

\item[(C3-$\kk$)] All relative $S_n$-extensions $K$  of $\kk$ in which  $\fp$ is unramified and 
has splitting type $\ssy$ necessarily have $\fp$ as an essential relative discriminant divisor.
\end{enumerate}
\end{thm}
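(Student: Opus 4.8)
The plan is to follow the proof of Theorem~\ref{th25} essentially verbatim, replacing the rational prime $p$ by the prime ideal $\fp$, the finite field $\FF_p$ by the residue field $\FF_q = O_{\kk}/\fp$ of order $q = N_{\kk/\QQ}\fp = p^k$, and invoking Theorem~\ref{th51} and Theorem~\ref{th52} in place of Theorem~\ref{th42} and Theorem~\ref{th13}, together with the discriminant identity~\eqref{eq-disc}.

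\textbf{Step 1: (C1-$\kk$) $\Leftrightarrow$ (C2-$\kk$).} First I would note that, since $q = p^k \notin \{0,1\}$, the prefactor $\frac{1}{q^{n-1}(q-1)}$ in~\eqref{spl-def} is finite and nonzero, so $\nu_{n,q}^{\ast}(C_{\ssy}) = 0$ holds if and only if the cycle polynomial value $N_{\ssy}(q)$ vanishes. By Proposition~\ref{pr33}(2), $N_{\ssy}(q)$ counts the monic degree $n$ polynomials in $\FF_q[X]$ having a square-free factorization of splitting type $\ssy$; hence $N_{\ssy}(q) = 0$ exactly when no such polynomial over $\FF_q$ exists. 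Reduction modulo $\fp$ is a surjective ring homomorphism $O_{\kk}[x] \to (O_{\kk}/\fp)[x] \cong \FF_q[x]$ carrying monic degree $n$ polynomials onto monic degree $n$ polynomials, so this in turn is equivalent to the assertion that no monic $f(x) \in O_{\kk}[x]$ of degree $n$ has a square-free factorization $(\bmod\ \fp)$ of type $\ssy$. Now by the identity~\eqref{eq-disc}, for monic $f(x) \in O_{\kk}[x]$ one has $(Disc(f))O_{\kk} = Disc[O_f \mid O_{\kk}]$, and $\fp$ divides $(Disc(f))$ if and only if the reduction $\overline{f} \in \FF_q[X]$ has a repeated irreducible factor. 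Since a splitting type equal, as a partition, to $\ssy \vdash n$ is by definition square-free, every $f$ whose reduction $(\bmod\ \fp)$ has splitting type $\ssy$ automatically satisfies $\fp \nmid Disc[O_f \mid O_{\kk}]$. Consequently the universally quantified implication (C2-$\kk$) is vacuously true precisely when its hypothesis is never met, that is, precisely when no monic degree $n$ polynomial over $O_{\kk}$ has a square-free factorization $(\bmod\ \fp)$ of type $\ssy$; by the chain of equivalences above this is equivalent to $N_{\ssy}(q) = 0$, i.e.\ to (C1-$\kk$).

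\textbf{Step 2: (C1-$\kk$) $\Leftrightarrow$ (C3-$\kk$).} Next I would argue the contrapositive in both directions, mirroring the proof of Theorem~\ref{th25}. Assume (C3-$\kk$) fails: there is a relative $S_n$-number field $K/\kk$ in which $\fp$ is unramified, has splitting type $\ssy$, and is \emph{not} an essential relative discriminant divisor. The latter means $K$ contains a monogenic order $O_{\kk}[1,\theta,\dots,\theta^{n-1}]$ whose relative discriminant is prime to $\fp$, which is exactly property (P1-$\kk$) of Theorem~\ref{th52} for the single prime ideal $\fp$; and since a prime unramified in $K$ is automatically unramified in the Galois closure $K^{spl}$, while in $\Gal(K^{spl}/\kk) = S_n$ the cycle type of an element determines its conjugacy class (as noted at the start of Section~\ref{sec43}), the condition that $\fp$ be unramified with splitting type $\ssy$ is the same as the Artin-symbol condition $\Big[\frac{K^{spl}/\kk}{(\fp)}\Big] = C_{\ssy}$, which is (P2-$\kk$). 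Theorem~\ref{th52} (in the direction $(2)\Rightarrow(1)$) then gives $\nu_{n,q}^{\ast}(C_{\ssy}) > 0$, so (C1-$\kk$) fails. Conversely, if (C1-$\kk$) fails then $\nu_{n,q}^{\ast}(C_{\ssy}) > 0$, and Theorem~\ref{th52} (in the direction $(1)\Rightarrow(2)$) produces a relative $S_n$-extension $K/\kk$ with (P1-$\kk$) and (P2-$\kk$); such a field has $\fp$ unramified with splitting type $\ssy$ and $\fp$ not an essential relative discriminant divisor, so (C3-$\kk$) fails.

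\textbf{Where the difficulty lies.} The argument is bookkeeping on top of Theorems~\ref{th51} and~\ref{th52} and Proposition~\ref{pr33}; the one point needing care is in Step~1, namely that ``$f$ has splitting type $\ssy$ modulo $\fp$'' (with $\ssy \vdash n$ read as a conjugacy class) forces the reduction to be square-free, so that~\eqref{eq-disc} applies and (C2-$\kk$) collapses to a non-existence statement rather than a genuine divisibility constraint. As with Theorem~\ref{th25}, I would close by remarking that this proof does not exhibit even one relative $S_n$-extension realizing (C3-$\kk$) for a pair $(\fp,\ssy)$ with $\nu_{n,q}^{\ast}(C_{\ssy}) = 0$; that would follow from an appropriate relative analogue of Bhargava's Conjecture~\ref{conj51}.
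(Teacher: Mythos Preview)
Your proof is correct and follows the same route as the paper's own argument, which simply says the proof parallels that of Theorem~\ref{th25} and notes that the equivalence (C1-$\kk$) $\Leftrightarrow$ (C2-$\kk$) uses~\eqref{eq-disc}. Your write-up is in fact more careful than the paper's: you explicitly handle the fact that (C2-$\kk$) is phrased as a discriminant-divisibility condition (rather than the bare non-existence statement of (C2) in Theorem~\ref{th25}), correctly observing via~\eqref{eq-disc} that the conclusion of (C2-$\kk$) is automatically false whenever its hypothesis is met, so that the universal statement collapses to non-existence; and in Step~2 you spell out both directions of the contrapositive and the passage from splitting type in $K$ to Artin symbol in $K^{spl}$, whereas the paper's proof of Theorem~\ref{th25} is terser on these points.
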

\begin{proof}
The proof parallels that of Theorem \ref{th25}.
We note only that to establish  the equivalence (C1-$\kk$) $\Leftrightarrow$ (C2-$\kk$), one uses \eqref{eq-disc}.
\end{proof}

In cases where (C1-$\kk$) holds this proof does not establish that there exist any fields satisfying (C3-$\kk$). \smallskip

%#######################################################################
%Section 7: Generalizations
%#######################################################################

\section{Generalizations}\label{sec6}

%#######################################################################
%Subection 7.1
%#######################################################################

\subsection{ Characteristic polynomials of random integer matrices}\label{sec71}

The problem studied in this paper can be viewed as   a special case of   study of   characteristic polynomials
of random matrices. One may consider random matrices 
%${\bf A}$
 drawn from a group like
$GL(n, \ZZ)$  with constraints on the  size of the matrix   ${\bf A}= [a_{i,j}]$ ( measured in some matrix norm), 
and also putting side conditions  on the allowed
elements.  The problem  for degree $n$ polynomials above can be encoded as such  random
$n \times n$ matrices (having entries $|a_{i,j}| \le B$)
by mapping the polynomial  $f(x)$ to the  companion
matrix having characteristic polynomial $f(x)$.  
After  reduction $(\bmod \, p)$ from $GL(n, \ZZ)$ 
one  obtains  a particular distribution of random matrices having entries over 
the  finite field $\FF_p$ with a  side condition forcing many matrix entries to be zero.
Our imposed restriction on  factorization of polynomials being squarefree corresponds
requiring that the associated matrices in $GL(n; \FF_p)$ have distinct eigenvalues,
i.e.  they belong to  semisimple conjugacy classes.
 One can ask whether  there are further interesting generalizations of the model of this paper
results in the random matrix  context.

There are many results known  considering random integer matrices  in more general models. 
In 2008 Kowalski \cite[Chap. 7]{Kowalski:2008} showed
that the characteristic polynomial of a random matrix in $SL(n, \ZZ)$ drawn using a random
walk is   an $S_n$-polynomial with probability approaching $1$ as the number of
steps increases. For splitting fields of characteristic polynomials of
random elements drawn from more general split  reductive arithmetic groups $G$ 
see  work of Gorodnik and Nevo \cite{Gorodnik-Nevo:2011},
Jouve, Kowalski and Zywina \cite{Jouve-KZ:2013}.  
In their framework the Galois group $S_n$ is replaced by the Weyl group $W(G)$ of the underlying algebraic group $G$;
 the case $W(G)= S_n$ corresponds to $G= SL_n$.  
 Lubotzky and Rosenzweig \cite{LR:2012} give a further generalization to a wider class of groups
with coefficients in a wider class of fields, where  the
``generic" Galois group of a random element may have a more complicated behavior.

There are also many results known on 
 the  distribution
of characteristic polynomials of random matrices over finite fields $\FF_q$; this subject is surveyed in 
 Fulman \cite{Fulman:2002}. 
His paper puts emphasis on $Mat(n, \FF_q)$ and $GL(n, \FF_q)$, 
and includes  results on factorization type of characteristic polynomials (see also \cite{Fulman-NP:2005}).
 Example 2 in \cite[Section 2.2]{Fulman:2002}
 observes that the  factorization type 
 for a uniformly drawn matrix in $Mat(n , \FF_q)$ has a 
distribution depending on $n$ and $q$ that approaches that of  a random degree $n$ monic polynomial in $\FF_q[X]$
as $q \to \infty$.
Fulman \cite[Section 3.1]{Fulman:2002}
 also introduces a family of probability measures $M_{GL, u, q}$ on conjugacy classes of $GL(n, \FF_q)$, which
 when conditioned on fixed $n$ do not depend on the parameter $u$ and  have
 the rational function interpolation property in the parameter $q$.
 They therefore extend to a complex parameter $z$,
 defining complex-valued measures $M_{GL, z, q}$. 
  He  remarks \cite[Section 3.3]{Fulman:2002}
 that this distribution  coincides with the
 distribution on partitions  describing the
  Jordan block structure of a random unipotent element of $GL(n,q)$.  It would be interesting to determine whether the measures
 $M_{GL, u, q}$ have  any relation to the splitting measures studied in this paper. 
 % \medskip

%#######################################################################
%Subection 7.2
%#######################################################################

\subsection{Square-free polynomials and homological stability}\label{sec72}
%%%%%%%%%%%%%%%%%%
%The  $z$-splitting distributions of this paper for $z= q= p^k$ arise in another context.
%%%%%%%%%%%%%%%%%%%
The   splitting measures $\nu_{n, q}^{\ast} (\cssy)$   count the relative fraction of monic square-free polynomials $(\bmod \, p)$ 
that have a given factorization type  in $\FF_q[x]$.
Recently, as a special case  of a general theory,  Church, Ellenberg and Farb \cite{CEF:2013b}  
observed that  the  monic square-free polynomials in $\FF_q[x]$ 
%$(\bmod \, p)$
for $q=p^k$ label  points in an  interesting moduli space $Y_n(\FF_q)$ defined over $\FF_q$,
the {complement of the discriminant locus}, which carries an $S_n$-action.
They relate point counts on the space $Y_n(\FF_q)$ specified by factorizations of
square-free polynomials in $\FF_q[x]$ to the topology of the 
{\em configuration space} 
$$
X_n(\CC) = PConf_n(\CC) := \{ (z_1, z_2, \cdots, z_n): z_i \in \CC, z_i \ne z_j\},
$$
which itself carries an $S_n$-action.
 The configuration space  ${\rm PConf}_n(\CC)$ is an affine variety 
 which is the complement of a set of hyperplanes.
  (It is a special case of a discriminant variety, see Lehrer \cite{Lehrer:2004}.)
 Church, Ellenberg and Farb  study the $S_n$-representations produced by the $S_n$-action on the
 homology of this space and 
 show certain  homological stability properties of these representations hold
 as $n \to \infty$. They then study limiting behaviors of  
{ polynomial statistics} of these points   attached to  a fixed multivariate polynomial $P(x_1, ..., x_m) \in \QQ[x_1, ..., x_m]$
and relate this behavior to homological stability. 

The statistics they study over $Y_n(\FF_q)$ can be expressed in terms of  the $q$-splitting measures $\nu_{n, q}^{\ast}(\cdot)$,
which may permit an alternative way to view some of their results.
We hope to consider this topic  further elsewhere.

For  general results on homological stability properties 
under $S_n$-actions see  Church et al \cite{Church:2011},  \cite{CEF:2013a}, \cite{CF:2012}.\medskip

\paragraph{\bf Acknowledgments.}
The authors thank Dani Neftin for remarks on
characteristic polynomials of random matrices and for bringing
relevant references to our attention.
They thank the reviewer for helpful comments.
Some of the work of the second author  was
done   at the University of Michigan and at the Technion,
whom he thanks for support.  
%thanks for support.

%#######################################################################
%#######################################################################
%#######################################################################

%\bibliographystyle{plain}

\end{document}